\newtheorem{theorem}{Theorem}[section]
\newtheorem{proposition}[theorem]{Proposition}
\newtheorem{lemma}[theorem]{Lemma}
\newtheorem{cor}[theorem]{Corollary}
\newtheorem{procedure}[theorem]{Procedure}
\newtheorem{conjecture}[theorem]{Conjecture}
\newtheorem{definition}[theorem]{Definition}
\newtheorem{example}[theorem]{Example}
\numberwithin{equation}{section}
\numberwithin{figure}{section}
\numberwithin{table}{section}
\newlength\cellsize \setlength\cellsize{12\unitlength}
\newcommand\cellify[1]{\def\thearg{#1}\def\nothing{}%
\ifx\thearg\nothing
\vrule width0pt height\cellsize depth0pt\else
\hbox to 0pt{\usebox2\hss}\fi%
\vbox to 12\unitlength{
\vss
\hbox to 12\unitlength{\hss$#1$\hss}
\vss}}
\newcommand\tableau[1]{\vtop{\let\\=\cr
\setlength\baselineskip{-16000pt}
\setlength\lineskiplimit{16000pt}
\setlength\lineskip{0pt}
\halign{&\cellify{##}\cr#1\crcr}}}
\newcommand\expath[1]{%
\hbox to 0pt{\usebox3\hss}%
\vbox to 12\unitlength{
\vss
\hbox to 12\unitlength{\hss$#1$\hss}
\vss}}
\newcommand{\geee}{g}
\newcommand{\geea}{g}
\newcommand{\geed}{g}
\newcommand{\geef}{g}
\newcommand{\geeg}{k}
\newcommand{\geeh}{g}
\newcommand{\dee}{a}
\newcommand{\cee}{p}
\newcommand{\mTp}{(T',m)}
\newcommand{\QS}{\check{\mathscr{S}}}
\newcommand{\YQS}{\hat{\mathscr{S}}}
\newcommand{\dI}{\mathfrak{S}^*}
\newcommand{\I}{\mathfrak{S}}
\newcommand{\dYQS}{\hat{\bf{s}}}
\newcommand{\ree}[1]{(\ref{#1})}
\author{Edward E. Allen \and Joshua Hallam \and Sarah K. Mason}
\title[Dual Immaculates as Young Quasisymmetric Schurs]{Dual Immaculate Quasisymmetric Functions Expand Positively into Young Quasisymmetric Schur Functions}
\address{Department of Mathematics, Wake Forest University}
\keywords{quasisymmetric functions, dual immaculate functions, Schensted insertion, Schur functions, tableaux}
\begin{document}
\maketitle

 \paragraph{Abstract.}
We describe a combinatorial formula for the coefficients when the dual immaculate quasisymmetric functions are decomposed into Young quasisymmetric Schur functions.  We prove this using an analogue of Schensted insertion.  Using this result, we give necessary and sufficient conditions for a dual immaculate quasisymmetric function to be symmetric.  Moreover, we show that the product of a Schur function and a dual immaculate quasisymmetric function expands positively in the Young quasisymmetric Schur basis.  We also discuss the decomposition of the Young noncommutative Schur functions into the immaculate functions.  Finally, we provide a Remmel-Whitney-style rule to generate the coefficients of the decomposition of the dual immaculates into the Young quasisymmetric Schurs algorithmically and an analogous rule for the decomposition of the dual bases.

\section{Introduction}\label{sec:intro}

The Schur functions are a fundamental object of study in the areas of algebraic combinatorics, representation theory, and geometry.  They were introduced by Cauchy in 1815~\cite{Cau1815} and appeared in Schur's seminal dissertation~\cite{Sch73} as the characters of the irreducible representations of the general linear group $GL(n,\mathbb{C})$.  Schur functions can be generated by means of divided difference operators, raising operators, matrix determinants, and monomial weights.  (See texts such as \cite{Ful97,Mac08,Sag01,Sta99} for details.)  The multiplication of Schur functions is equivalent to the Schubert calculus on intersections of subspaces of a vector space~\cite{Sta77}.  The Schur functions form an orthonormal basis for the graded Hopf algebra $Sym$ of symmetric functions~\cite{Gei77}.   Symmetric functions appear in classical invariant theory results such as the Chevalley-Shephard-Todd Theorem~\cite{Che55, SheTod54} as well as more recent developments such as the theory of Macdonald polynomials~\cite{Mac88}, nonsymmetric Macdonald polynomials~\cite{Mar99}, and their related combinatorics~\cite{GarRem05,Hag04a, Hag06}.  The algebra $Sym$ of symmetric functions generalizes to both a nonsymmetric analogue $QSym$ and a noncommutative analogue $NSym$.

Stanley laid the foundation for the algebra $QSym$ of quasisymmetric functions through his work on $P$-partitions~\cite{Sta72}.   Gessel ~\cite{Ges84} formalized the definition of quasisymmetric functions and introduced the fundamental basis.  Ehrenborg~\cite{Ehr96} further developed the Hopf algebra structure of $QSym$, which is the Hopf algebra dual to the noncommutative symmetric functions $NSym$.   $QSym$ also plays an important role in permutation enumeration~\cite{GesReu93} and reduced decompositions for finite Coxeter groups~\cite{Sta84}.  Quasisymmetric functions appear in probability theory through the study of random walks~\cite{HerHsi09} and riffle shuffles~\cite{Sta01}.  They also arise in representation theory as representations of Lie algebras~\cite{GesReu93} and general linear Lie superalgebras~\cite{Kwo09} and in the study of Hecke algebras~\cite{Hiv00}.  Discrete geometers use quasisymmetric functions in the study of the {\bf cd}-index~\cite{BHvW06} and as flags in graded posets~\cite{Ehr96}.    Quasisymmetric functions are ubiquitous in combinatorics in part because $QSym$ is the terminal object in the category of combinatorial Hopf algebras~\cite{ABS06}.

In~\cite{HLMvW09}, Haglund \textit{et al.} introduced a new basis for quasisymmetric functions
called the \emph{quasisymmetric Schur functions} $\{\QS_\gamma\}_\gamma$.
The quasisymmetric Schur functions are specializations of nonsymmetric Macdonald polynomials obtained by setting $q=t=0$ in the combinatorial formula described in~\cite{HHL08} and summing the resulting Demazure atoms over all weak compositions which collapse to the same strong composition.  This new basis satisfies many properties similar to those enjoyed by the Schur functions including a Robinson-Schensted-Knuth style bijection with matrices~\cite{HLMvW10}, a Pieri-style multiplication rule~\cite{HLMvW09}, and an omega operation~\cite{MasRem10}.  Haglund \textit{et al.} ~\cite{HLMvW10} provide a refinement of the Littlewood-Richardson rule which gives a formula for the coefficients appearing in the product of a quasisymmetric Schur function and a Schur function when expanded in terms of the quasisymmetric Schur function basis.  The quasisymmetric Schur functions are generated by fillings of composition diagrams analogously to how Schur functions are generated by semistandard Young tableaux.  In representation theory, quasisymmetric Schur functions are dual to noncommutative irreducible characters of the symmetric group~\cite{vWi13}.  The {\it Young quasisymmetric Schur functions}~\cite{LMvW13} are variants of quasisymmetric Schur functions obtained by reversing the entries in composition diagrams.  In this paper, we work with the Young quasisymmetric Schur functions.

The algebra $NSym$ of noncommutative symmetric functions plays an important role in representation theory due to its relationship to quantum linear groups, Hecke algebras at $q=0$~\cite{KroThi97}, and the universal enveloping algebra of $gl_N$~\cite{KroThi99}.  In addition, $NSym$ is isomorphic to the Solomon descent algebra~\cite{GKLLRT95,MalReu95}.

The {\it immaculate basis} for $NSym$, introduced in~\cite{BerBerSalSerZab14}, is constructed using non-commutative Bernstein operators.  The immaculate basis appears in representation theory in relation to indecomposable modules of the $0$-Hecke algebra~\cite{BBSSZ15}.  The forgetful map projects the immaculate basis onto the Schur basis and there exists a Jacobi-Trudi-style formula for constructing the immaculate basis~\cite{BerBerSalSerZab14}.  The {\it dual immaculate quasisymmetric functions} form the dual to the immaculate basis.  Like the quasisymmetric Schur functions, they are generated using fillings of composition diagrams and form another quasisymmetric analogue to Schur functions.  This basis has a positive expansion in terms of the monomial and fundamental bases of $QSym$~\cite{BerBerSalSerZab14} as well as a Pieri rule~\cite{BerSanZab14,BBSSZ13}.

In this paper, we investigate the connection between these two quasisymmetric analogues of Schur functions.  In particular, we show that the dual immaculate basis, $\{{\dI}_\alpha\}_\alpha$,
decomposes as a nonnegative sum of Young quasisymmetric Schur functions $\{\YQS_\gamma\}_\gamma.$

\begin{theorem}{\label{thm:main}}
The dual immaculate quasisymmetric functions decompose into Young quasisymmetric Schur functions in the following way:
$$
\dI_{\alpha} = \sum_{\beta} c_{\alpha,\beta} \YQS_{\beta}
$$
where $c_{\alpha,\beta}$ is the number of DIRTs (Definition~\ref{dirtDef}) of shape $\beta$ with row strip shape $\alpha^{rev}$ (Definition~\ref{def:rowstrip}).
\end{theorem}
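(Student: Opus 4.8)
The plan is to realize both sides as weighted sums over fillings of composition diagrams and to interpolate between them with a weight-preserving, Schensted-style insertion. Recall that $\dI_\alpha = \sum_U x^{U}$, the sum running over (semistandard) immaculate tableaux $U$ of shape $\alpha$ and $x^{U}$ denoting the monomial weight of $U$, while $\YQS_\beta = \sum_P x^{P}$, the sum running over semistandard Young composition tableaux $P$ of shape $\beta$. Thus it suffices to build a bijection $\Phi\colon U \mapsto (P,Q)$ carrying each immaculate tableau $U$ of shape $\alpha$ to a pair in which $P$ is a semistandard Young composition tableau of some shape $\beta$ with the same content as $U$, and $Q$ is a recording object; the recording object will turn out to be exactly a DIRT of shape $\beta$ with row strip shape $\alpha^{rev}$.

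First I would specify the insertion itself --- the analogue of Schensted insertion promised by the abstract --- by reading the entries of $U$ in a fixed order and inserting them one at a time into a growing Young composition tableau $P$, while recording in $Q$, at each step, the cell that is created together with a label remembering which row of $U$ the inserted entry came from. The labels accumulate into row strips, and tracking them carefully should show that $Q$ is precisely a DIRT whose row strip shape is $\alpha^{rev}$, the reversal arising because the rows of an immaculate tableau are processed from the bottom. Since a single bumping step only permutes entries among cells and never creates or destroys a value, $P$ and $U$ have the same content automatically. The key structural claims to verify here are that inserting the reading word of any immaculate tableau always produces a legitimate semistandard Young composition tableau, and that the process is reversible: I would exhibit an explicit reverse (uninsertion) procedure that, given a pair $(P,Q)$ of the stated type, reconstructs the unique $U$ mapping to it.

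The crux of the argument is this reversibility together with the claim that the image of $\Phi$ is the full product set, i.e.\ that as $U$ ranges over immaculate tableaux of shape $\alpha$ and content $\mu$, the pairs $(P,Q)$ range over all pairs with $P$ a semistandard Young composition tableau of shape $\beta$ and content $\mu$ and $Q$ a DIRT of shape $\beta$ with row strip shape $\alpha^{rev}$, independently of one another. This independence --- the analogue of the defining feature of RSK --- is the step I expect to be the main obstacle, since it requires showing that the recording data $Q$ can be prescribed arbitrarily among DIRTs of the correct shape without constraining $P$, and conversely. Granting it, and summing the weight identity over all $U$ of shape $\alpha$ while grouping by the common shape $\beta$ of $P$ and $Q$, we obtain
$$
\dI_\alpha = \sum_U x^{U} = \sum_\beta \Big(\sum_{P} x^{P}\Big)\,\big|\{Q : Q \text{ is a DIRT of shape } \beta \text{ with row strip shape } \alpha^{rev}\}\big| = \sum_\beta c_{\alpha,\beta}\,\YQS_\beta,
$$
which is the asserted expansion. (Should descent bookkeeping prove cleaner than direct content tracking, one may equivalently standardize and run the same insertion at the level of the fundamental basis, reducing the product-set claim to the statement that insertion preserves the descent set of $U$.)
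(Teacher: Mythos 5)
Your outline matches the paper's architecture — insert the immaculate reading word letter by letter into a growing Young composition tableau $P$, record the new cells in $Q$, show $Q$ is a DIRT with row strip shape $\alpha^{rev}$, and invert via an explicit uninsertion — but the step you yourself flag as ``the main obstacle'' is exactly where your proposal stops short, and your primary route makes it harder than it needs to be. You propose to run the bijection at the semistandard level and deduce the product-set (independence) structure directly from content tracking. That claim is true, but nothing in your sketch establishes it: having a two-sided inverse gives injectivity, not the assertion that the image is the \emph{full} product of $\{$SSYCT of shape $\beta$, content $\mu\}$ with $\{$DIRTs of shape $\beta$, row strip shape $\alpha^{rev}\}$. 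Moreover, at the semistandard level the reading word has repeated letters, so the bumping and rapture analyses acquire tie-breaking subtleties (weak versus strict inequalities along bumping paths and escape routes) that your sketch does not address; the uninsertion outputs within a row strip must be shown to be \emph{weakly} increasing for the recovered $U$ to be a valid immaculate tableau, and verifying that an arbitrary pair $(P,Q)$ uninserts to something of shape exactly $\alpha$ is a substantive lemma, not a formality.

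The paper closes this gap by taking the route you relegate to a parenthesis: it works entirely with \emph{standard} immaculate tableaux and concludes in the fundamental basis. Concretely, it proves (i) the recording filling of a standard immaculate tableau satisfies the row-strip and recording-triple-rule conditions, hence is a DIRT (Propositions~\ref{prop:RowStrips} and~\ref{recTabTripRuleProp}); (ii) rapture is a two-sided inverse of insertion (Theorem~\ref{lem:UninsertInverse}) \emph{and} uninsertion applied to any pair $(P,Q)$ with $Q$ a DIRT of row strip shape $\alpha^{rev}$ yields the reading word of a standard immaculate tableau of shape $\alpha$ (Lemma~\ref{lem:UninsertWord}), which is what gives surjectivity onto the full product set; and (iii) $Des_{\dI}(U)=Des_{\YQS}(P)$ (Proposition~\ref{desPreservedProp}). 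Then $\sum_{U}F_{Des_{\dI}(U)}=\sum_{(P,Q)}F_{Des_{\YQS}(P)}$ together with the known fundamental expansions of $\dI_\alpha$ and $\YQS_\beta$ yields the theorem with no semistandard bookkeeping at all. To make your proof complete, either commit to this standard/descent-set route and prove (i)--(iii), or, if you insist on the semistandard route, supply the surjectivity argument and redo the insertion/rapture analysis with repeated entries.
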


This result and its dual version, Theorem~\ref{thm:mainDual}, describe the relationship between two very different quasisymmetric analogues of Schur functions as well as their dual bases in $NSym$ whose connection is not apparent from their definitions.  Dual immaculate quasisymmetric functions decompose positively into Young quasisymmetric Schurs, which then further decompose positively into Gessel's fundamental quasisymmetric functions, creating a tower of Schur-like objects.  The proof of Theorem~\ref{thm:main} involves a Schensted-like insertion algorithm.  The coefficients appearing in this decomposition can be obtained through a combinatorial algorithm similar to the Remmel-Whitney approach to computing Littlewood-Richardson coefficients.  We use this Theorem to obtain new proofs of several results about dual immaculate quasisymmetric functions.  In particular, since any symmetric function which is quasisymmetric Schur positive must be Schur positive, dual immaculate positivity of a symmetric function implies Schur positivity.  We also prove that a dual immaculate quasisymmetric function is symmetric if and only if it is indexed by a certain type of hook shape.  Finally, we show that the product of a Schur function and a dual immaculate quasisymmetric function expands positively into the Young quasisymmetric Schur basis.  

The remainder of the paper is organized as follows.  In Section~\ref{backgroundSec}, we review the background material on compositions and their diagrams.  We then define the Young quasisymmetric Schur functions as well as the dual immaculate quasisymmetric functions and explain their decompositions in the fundamental basis.  Section~\ref{Sec:insertion} describes the insertion algorithm that is used to prove our main result.  We then discuss the proof of our main theorem in Section~\ref{mainThmSec}.  This section also includes some results about the properties of dual immaculate recording tableaux, the connections with $Sym$, and  the decomposition of the dual bases in $NSym$. 
In Section~\ref{sec:RWAlg}, we provide Remmel-Whitney-style algorithms that compute the coefficients of the decomposition in $QSym$ and $NSym$.  We  conclude with a section on future directions.

\section{Background}\label{backgroundSec}

A {\it composition} $\alpha$ of $n$, written $\alpha \vDash n$, is a finite sequence of positive integers that sum to $n$.  If $\alpha=(\alpha_1, \alpha_2, \hdots , \alpha_\ell)$, then $\alpha_i$ is the $i^{th}$ {\it part} of $\alpha$ and $\ell(\alpha)=\ell$ is the \emph{length} of $\alpha$.  If $\alpha=(\alpha_1,\alpha_2,\dots, \alpha_\ell)$ then we define the reverse of $\alpha$ to be $\alpha^{rev} = (\alpha_\ell,\alpha_{\ell-1},\dots, \alpha_1)$.   A composition $\beta$ is said to be a {\it refinement} of a composition $\alpha$ if $\alpha$ can be obtained from $\beta$ by summing collections of consecutive parts of $\beta$.  We say that a composition $\beta$ is a \emph{rearrangement} of a composition $\alpha$ if the parts of $\beta$ can be reordered to form $\alpha$.  For example, $(3,4,1,3)$ is a rearrangement of $(1,4,3,3)$.    Given two compositions $\alpha=(\alpha_1,\alpha_2,\dots, a_\ell)$ and $\beta=(\beta_1,\beta_2,\dots, \beta_k)$ we say $\alpha \succeq \beta$ in \emph{dominance order} if  $\alpha_1+\alpha_2+\cdots \alpha_i \geq \beta_1+\beta_2+\cdots+ \beta_i$ for all $i\geq 1$.  Here we make the assumption that if $i> \ell$ then $\alpha_i=0$ and if $i>k$ then $\beta_i=0$.    A composition is a {\it partition} if $\alpha_i \ge \alpha_{i+1}$ for all $1 \le i \le \ell-1$. Finally, if $\alpha$ is  a composition then we  define $set(\alpha) =  \{\alpha_1,\alpha_1+\alpha_2,\dots, \alpha_1+\alpha_2+\dots + \alpha_{\ell-1}\}$.

Given a composition $\alpha=(\alpha_1, \alpha_2, \hdots , \alpha_\ell)$, the {\it diagram} $D_\alpha$ is constructed by placing boxes (or {\it cells}) into left-justified rows so that the $i^{th}$ row from the bottom contains $\alpha_i$ cells.  The shape of $D_\alpha$ is denoted by $\alpha$.  This is analogous to the French notation for the Young diagram of a partition.  Position $(i,j)$ in $D_\alpha$ refers to the cell in the $i^{th}$ column (reading from left to right) and the $j^{th}$ row (reading from bottom to top).  For example, the diagram $D_\alpha$ pictured below corresponds to a diagram of shape $\alpha=(2,4,3)$ with an X in position $(3,2)$.
$$
\tableau{ {}&{}&{}\\ {}&{}&{\mbox{X}} &{} \\{}&{}}\label{E:Tableau}
$$

A {\it quasisymmetric function} is a bounded degree formal power series $f(x) \in \mathbb{Q}[[x_1, x_2, \hdots ]]$ such that for all compositions $\alpha=(\alpha_1, \alpha_2, \hdots, \alpha_\ell)$, the coefficient of $\prod x_i^{\alpha_i}$ is equal to the coefficient of $\prod x_{i_j}^{\alpha_i}$ for all $i_1 < i_2 < \cdots < i_\ell$.  Let $QSym$ denote the algebra of quasisymmetric functions
and $QSym_n$ denote the space of homogeneous quasisymmetric functions
of degree $n$, so that $$QSym = \bigoplus_{n \geq 0} QSym_n.$$

A natural basis for $QSym_n$  is the {\it monomial quasisymmetric basis}, given by the collection $\{M_\alpha\}_{\alpha \vDash n}$ where
$$\displaystyle{M_{\alpha} = \sum_{i_1 < i_2 < \cdots < i_\ell} x_{i_1}^{\alpha_1} x_{i_2}^{\alpha_2} \cdots x_{i_\ell}^{\alpha_\ell}.}$$   Gessel's {\it fundamental basis for quasisymmetric functions}~\cite{Ges84} can be expressed by $$\displaystyle{F_{\alpha} = \sum_{\beta } M_{\beta}},$$ where  the sum is over all $\beta$ which are refinements of $\alpha$.

Given a diagram $D_{\alpha}$, a \emph{filling of $D_\alpha$} is a function $G:D_\alpha \rightarrow \mathbb{Z}_+$.  Here $G(i,j)$ denotes the image of the cell $(i,j)$ and is called the \emph{entry} of cell $(i,j).$

\begin{definition}\label{SSYRTdef}~\cite{LMvW13}
The filling $T: D_\alpha  \rightarrow \mathbb{Z}_+$ is a \emph{semistandard Young composition tableau (SSYCT)} of shape $\alpha$ if it satisfies the following conditions:
\begin{enumerate}
\item Row entries are weakly increasing from left to right (i.e., $T(i,j)\le T(i+1,j)$ for all $(i,j), (i+1,j) \in D_\alpha$).
\item The entries in the leftmost column are strictly increasing from bottom to top (i.e., $T(1,j)<T(1,j+1)$ for all $(1,j), (1,j+1) \in D_\alpha$).
\item (Young composition triple rule)
For all $\{i,j,k\}$ such that $1\leq j<k \leq \ell(\alpha)$ and $1\leq i<max \{\alpha_j,\alpha_k\},$ if ${T}(i,k) \le { T}(i+1,j)$, then ${T}(i+1,k)< { T}(i+1,j)$
under the assumption that the entry in any cell not contained in $D_\alpha$ is $\infty$.
\end{enumerate}
\end{definition}

\noindent
Less formally, the Young composition triple rule states that for any subarray in ${T}$ (shown in Figure \ref{YCTTripRuleFig}), if $b \leq a$, then $c<a$.
Here we assume that if the position immediately right of $b$ is empty, then $c=\infty$.  Additionally, we set the \emph{augmentation of $T$}, denoted by $\bar{T}$, to be the filling of $D_{\bar{\alpha}}$, where
$\bar{\alpha}=(\alpha_1+1, \alpha_2+1, \ldots, \alpha_\ell+1),$ in which
the right-most entry in each row is $\infty$ and the remaining cells have the same filling as $T$.  (Here we abuse notation by allowing infinities in our augmentation, while technically infinities are not allowed to be entries in a filling.)  We consider $\bar{T}$ to be a SSYCT if $T$ is a SSYCT.
See Figure ~\ref{fig:ct-reading} for an example.

\begin{figure}
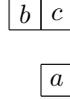

\begin{center}
$\tableau{b&c}$

\vspace{12 pt}

$\hspace{12 pt} \tableau{a}$

\end{center}

\caption{Young composition triple rule:  $b \leq a \Rightarrow c<a$}
\label{YCTTripRuleFig}
\end{figure}

\noindent

The following definition will be useful in Section~\ref{Sec:insertion}.

\begin{definition}
Read the entries in the columns of a Young composition tableau $T$ (or its augmentation $\bar{T}$) from top to bottom, beginning with the rightmost column of $T$ and working right to left.  This ordering of the cells is called the \emph{Young reading order}.  When the entries of the cells are read in Young reading order, the resulting word is called the \emph{Young reading word} of $T$, denoted $rw_{\YQS}(T)$.  See Figure ~\ref{fig:ct-reading} for an example.
\end{definition}
\noindent
Note that we will also define an \emph{immaculate reading word} in Definition \ref{ITRWdef}.

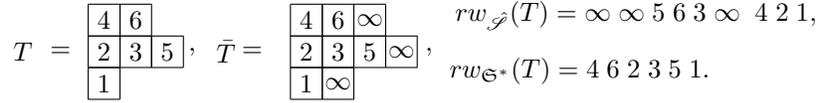
\begin{figure}
\begin{center}
\begin{tikzpicture}
\node at (-.2,0) {$T$};
\node at (.3,0) {$=$};
\node at (1.3,0) {$\tableau{4 & 6 \\ 2 & 3 & 5 \\ 1 }$};
\node at (2.05,0) {$,$};
\node at (2.5, 0) {$ \bar{T}$};
\node at (2.85,0)  {$=$};
\node at (4.2,0) {$ \tableau{4 & 6 & \infty \\ 2 & 3 & 5 & \infty \\ 1 &  \infty}$};
\node at (5.2,0) {$,$};
\node at (7.95,.5) {$ rw_{\YQS}(\bar{T}) = \infty \; \infty \; 5 \; 6 \; 3 \; \infty \;  \; 4 \; 2 \; 1,$};
\node at (7.2,.5-.75) {$ rw_{\dI}(T) = 4\; 6\; 2\;3\;5\;1.$};
\end{tikzpicture}
\caption{As an augmentation of a  Young composition tableau $T$, the Young reading word  of $\bar{T}$ is $\infty \; \infty \; 5 \; 6 \; 3 \; \infty \;  \; 4 \; 2 \; 1$.  As an immaculate tableau, the immaculate reading word of $T$ is  $4\; 6\; 2\;3\;5\;1$.\label{fig:ct-reading}}
\label{fig:ct-reading}
\end{center}
\end{figure}

The {\em weight} of a SSYCT $T$ of shape $\alpha$ is the monomial $\displaystyle{x^T=\prod_i x_i^{v_i}}$ where $v_i$ is the number of times the entry $i$ appears in $T$ as seen in Figure~\ref{SSYCTweights}.  A \emph{standard Young composition tableau} (SYCT) of shape $\alpha \vDash n$ is a semistandard Young composition tableau in which each of the numbers $\{1, \hdots , n\}$ appears exactly once.

\begin{definition}{\cite{LMvW13}}
Let $\alpha$ be a composition.  Then the Young quasisymmetric Schur function $\YQS_{\alpha}$ is given by $$\YQS_{\alpha}=\sum_T x^T,$$ summed over all semistandard Young composition tableaux $T$ of shape $\alpha$.  See Figure~\ref{SSYCTweights} for an example.
\end{definition}

We now describe the method given in Proposition 5.2.2 of \cite{LMvW13} for writing a Young quasisymmetric Schur function as a positive sum of Gessel's fundamental quasisymmetric functions.

\begin{definition}
The \emph{Young descent set, $Des_{\YQS}(T)$}, of a standard Young composition tableau $T$ is the subset of $\{1, \hdots, n-1\}$ consisting of all entries $i$ of $T$ such that $i+1$ appears weakly to the left of $i$ in $T$.
\end{definition}
We use a subscript to denote the Young descent set $Des_{\YQS}(T)$, which is not usually done.  We do this because we will use another type of descent set, the
 \emph{immaculate descent set}, later in this development; see Definition~\ref{DefDesSIT}.

\begin{proposition}{\label{fundDecomp}}{~\cite{LMvW13}}
Let $\alpha, \beta$ be compositions.  Then $$\YQS_{\alpha} = \sum_{\beta} d_{\alpha, \beta} F_{\beta},$$ where $d_{\alpha, \beta}$ is equal to the number of standard Young composition tableaux $T$ of shape $\alpha$ such that $Des_{\YQS}(T) =  set(\beta)$.
\end{proposition}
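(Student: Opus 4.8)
The plan is to prove this by \emph{standardization} of semistandard Young composition tableaux, collapsing the semistandard generating function for $\YQS_\alpha$ into a sum of contributions indexed by standard tableaux, each of which is a single fundamental quasisymmetric function. First I would make precise the standardization $\mathrm{std}(T)$ of an SSYCT $T$ of shape $\alpha \vDash n$: relabel the entries by $1,2,\dots,n$ so that cells with strictly smaller entries receive strictly smaller labels, breaking ties among cells of equal entry by a fixed rule (the Young reading order of \cite{LMvW13}), chosen precisely so that the output is an SYCT. The first task is to check that $\mathrm{std}(T)$ really is a tableau of shape $\alpha$, i.e.\ that rows remain weakly increasing (now strict among formerly-equal entries), the first column remains strictly increasing, and, most delicately, the Young composition triple rule (Definition~\ref{SSYRTdef}) is preserved. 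The triple rule is the only non-local condition, so verifying that no admissible tie-breaking of equal entries can destroy it is the main technical point; the key observation is that the reading-order convention keeps a ``tight'' triple (one with $T(i,k)\le T(i+1,j)$) tight after relabeling, so the forced strict inequality $c<a$ survives.

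Next I would partition the SSYCT of shape $\alpha$ according to their standardization. Fix an SYCT $S$ of shape $\alpha$, write $D = Des_{\YQS}(S)$, and prove the following central claim: the fillings obtained from $S$ by substituting a weakly increasing sequence $a_1 \le a_2 \le \cdots \le a_n$ (placing $a_k$ in the cell of $S$ labeled $k$) are exactly the SSYCT $T$ with $\mathrm{std}(T)=S$, and such a filling is valid if and only if $a_k < a_{k+1}$ for every $k \in D$. Both directions of this claim hinge on unwinding the definition of $Des_{\YQS}$: the condition ``$k+1$ lies weakly left of $k$ in $S$'' is arranged to be exactly the situation in which equal values $a_k = a_{k+1}$ would violate one of the SSYCT conditions (a row, the first column, or a triple), thereby forcing a strict ascent, while at a non-descent the equality $a_k = a_{k+1}$ is permitted.

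Granting the claim, the generating identity is immediate. For fixed $S$,
$$\sum_{\mathrm{std}(T)=S} x^T \;=\; \sum_{\substack{1 \le a_1 \le \cdots \le a_n \\ a_k < a_{k+1}\ \text{if}\ k \in D}} x_{a_1} x_{a_2} \cdots x_{a_n} \;=\; F_{\beta},$$
where $\beta \vDash n$ is the composition with $set(\beta) = D = Des_{\YQS}(S)$; here I use the standard descent-set description of Gessel's fundamental basis, equivalent to $F_\beta = \sum_{\gamma} M_\gamma$ over refinements $\gamma$ of $\beta$. Summing over all SYCT $S$ of shape $\alpha$ and grouping by the composition $\beta$ gives
$$\YQS_\alpha \;=\; \sum_T x^T \;=\; \sum_{S} F_{\beta(S)} \;=\; \sum_\beta d_{\alpha,\beta}\, F_\beta,$$
with $d_{\alpha,\beta}$ equal to the number of SYCT $S$ of shape $\alpha$ satisfying $Des_{\YQS}(S) = set(\beta)$, as asserted. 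I expect the sole genuine obstacle to be the first two steps---showing that standardization respects the triple rule, and that $Des_{\YQS}(S)$ records \emph{exactly} the forced strict ascents---since the final passage from a descent-indexed monomial sum to the fundamental basis is routine.
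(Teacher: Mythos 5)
This proposition is quoted in the paper from \cite{LMvW13} (Proposition 5.2.2 there) and is not proved in the paper itself, so there is no internal proof to compare against; your standardization argument is the standard route and is essentially how the result is established in the cited source. Your plan is sound: the fiber-over-an-SYCT description and the passage to $F_\beta$ via the descent-set form of the fundamental basis are correct, and you rightly isolate the two real checks (preservation of the triple rule under standardization, and the equivalence of ``$i+1$ weakly left of $i$'' with a forced strict ascent). The one detail to pin down is the orientation of your tie-breaking rule: since equal entries of an SSYCT occupy distinct columns, consecutive labels coming from equal entries must \emph{not} create a Young descent, so equal entries must receive labels increasing from left to right --- which is the \emph{reverse} of the Young reading order (that order scans columns right to left), so ``breaking ties by the Young reading order'' as literally stated would produce spurious descents and must be flipped.
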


The example in Figure~\ref{SSYCTweights} shows that there is only one SYCT of shape $(1,2,1)$. It has Young descent set $\{1,3\}$ and therefore $\YQS_{(1,2,1)} = F_{(1,2,1)}$.

In~\cite{BerBerSalSerZab14}, the authors introduce a new basis of NSym called the immaculate basis.   Since QSym and NSym are dual, this gives rise to a dual  basis of QSym called the \emph{dual immaculate basis}.  One can define the dual immaculate quasisymmetric functions using {\it immaculate tableaux}.

\begin{figure}
$$\tableau{3 \\ 2 & 2 \\ 1} \qquad \tableau{4 \\ 2 & 2 \\ 1} \qquad \tableau{4 \\ 2 & 3 \\ 1} \qquad \tableau{4 \\ 3 & 3 \\ 1} \qquad \tableau{4 \\ 3 & 3 \\ 2} $$
$$\YQS_{121}(x_1,x_2,x_3,x_4)=x_1x_2^2x_3 + x_1x_2^2x_4 + x_1x_2x_3x_4 + x_1x_3^2x_4 + x_2x_3^2x_4$$
\caption{The SSYCT that generate $\YQS_{(1,2,1)}(x_1,x_2,x_3,x_4)$.}
\label{SSYCTweights}
\end{figure}

\begin{definition}~\cite{BerBerSalSerZab14}\label{SSYRTdef}
A filling $U: D_{\alpha}  \rightarrow \mathbb{Z}_+$ is an \emph{immaculate tableau} of shape $\alpha$ if it satisfies the following conditions:

\begin{enumerate}
\item Row entries are weakly increasing from left to right (i.e., $U(i,j)\le U(i+1,j)$ for all $(i,j), (i+1,j) \in D_\alpha$).
\item The entries in the leftmost column are strictly increasing from bottom to top (i.e., $U(1,j)<U(1,j+1)$ for all $(1,j), (1,j+1) \in D_\alpha$).
\end{enumerate}
\end{definition}

Note that the dual immaculate basis was originally introduced using English notation.  In the above definition we use the French notation for our tableaux; this is why in condition (2) above, the entries in the leftmost column increase from bottom to top rather than top to bottom.  Our definition of immaculate descent also reflects this modification.  We use French notation rather than English simply to preserve compatibility with the Young composition tableaux; none of the underlying mathematics is impacted in any way by this cosmetic convention.

Observe that every SSYCT is also an immaculate tableau since the definition is the same except that immaculate tableaux are not required to satisfy the Young composition triple rule.
We will now define the \emph{immaculate reading word} $rw_{\dI}(U)$ for an immaculate  tableau $U$.  Note that it is not the same as the Young reading word $rw_{\YQS}(U)$ for the Young composition tableau $U$.

\begin{definition} \label{ITRWdef}
Read the entries in the rows of an immaculate tableau $U$, from left to right,  beginning with the highest row of $U$ and working top to bottom.  The resulting word is called the \emph{immaculate reading word} of $U$, denoted $rw_{\dI}(U)$.  See Figure~\ref{fig:ct-reading} for an example.
\end{definition}

  We say that an immaculate tableau is a \emph{standard immaculate tableau} if the numbers $\{1,\dots, n\}$ each appear exactly once.  Just as with a Young composition tableau, the {\em weight} of an immaculate tableau $U$ of shape $\alpha$ is the monomial $x^U=\prod_i x_i^{v_i}$, where $v_i$ is the number of times the entry $i$ appears in $U$ as seen in Figure~\ref{ImTabWeightsFig}.

\begin{definition}
Let $\alpha$ be a composition.  The \emph{dual immaculate quasisymmetric function} $\dI_{\alpha}$ is given by $$\dI_{\alpha}=\sum_U x^U,$$ where the sum is over all immaculate tableaux of shape $\alpha$.  See Figure~\ref{ImTabWeightsFig} for an example.
\end{definition}

\begin{figure}
\begin{center}
\begin{tikzpicture}
\node at (0,0) {\tableau{3 \\ 2 & 2 \\ 1}};
\node at (1.25,0){\tableau{4 \\ 2 & 2 \\ 1}};
\node at (2.5, 0) { \tableau{4 \\ 2 & 3 \\ 1}};
\node at (3.75, 0) {\tableau{4 \\ 3 & 3 \\ 1}};
\node at (5, 0) { \tableau{4 \\ 3 & 3 \\ 2} };
\node at (6.25,0) { \tableau{3\\2&3\\1}};
\node at (7.5,0) {\tableau{3\\2&4\\1}};
\node at (2.5,-1.75) {\tableau{4\\2&4\\1}};
\node at (3.75, -1.75) { \tableau{4\\ 3&4\\2}};
\node at (5, -1.75)  { \tableau{4 & \\ 3 & 4 \\ 1}};

\end{tikzpicture}
\begin{align*} \dI_{121}(x_1,x_2,x_3,x_4)  =  &\  x_1x_2^2x_3 + x_1x_2^2x_4 + 2x_1x_2x_3x_4 + x_1x_3^2x_4 + x_2x_3^2x_4+x_1x_2x_3^2\\&+x_1x_2x_4^2
  +  x_2x_3x_4^2+x_1x_3x_4^2
\end{align*}
\caption{The immaculate tableaux that generate $\dI_{(1,2,1)}(x_1,x_2,x_3,x_4)$.}
\label{ImTabWeightsFig}
\end{center}
\end{figure}

Just as Young quasisymmetric Schur functions decompose into positive sums of fundamental quasisymmetric functions, the dual immaculate quasisymmetric functions decompose into the fundamental basis using descent sets.  Now we define the immaculate descent set of a standard immaculate tableau.

\begin{definition}
The \emph{immaculate descent set, $Des_{\dI}(U)$}, of a standard immaculate tableau $U$ is the subset of $\{1, \ldots, n-1\}$ consisting of all entries $i$ of $U$ such that $i+1$ appears strictly above $i$ in $U$.\label{DefDesSIT}
\end{definition}

 As an example, consider the filling
\begin{center}
\begin{tikzpicture}
\node at (0,0) {$G=$};
\node at (1,0) {$\tableau{2&3\\ 1&4}.$};
\end{tikzpicture}
\end{center}
We see that  $Des_{\dI}(G) =\{1\}$.  Note that the immaculate descent set  of a standard immaculate tableau is not the same as the Young descent set of a standard Young composition tableau.   In fact, the tableau $G$ is both a standard immaculate tableau and a standard Young composition tableau.  However, $Des_{\YQS}(G) = \{1,3\}$ and so the two descents sets for the same filling need not be the same.

We now explain how the dual immaculate quasisymmetric functions decompose into the fundamental basis.

\begin{proposition}{\label{fundDecompofDI}}{~\cite{BerBerSalSerZab14}}
Let $\alpha, \beta$ be compositions.  Then $$\dI_{\alpha} = \sum_{\beta} e_{\alpha, \beta} F_{\beta},$$ where $e_{\alpha, \beta}$ equals the number of standard immaculate tableaux $U$ of shape $\alpha$ with $Des_{\dI}(U) =  set(\beta)$.
\end{proposition}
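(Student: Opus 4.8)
The plan is to prove Proposition~\ref{fundDecompofDI} by expanding the dual immaculate quasisymmetric function $\dI_{\alpha}$ in the monomial basis and then collecting monomials according to the fundamental basis via the inclusion-exclusion relation $F_{\beta} = \sum_{\gamma} M_{\gamma}$, where $\gamma$ ranges over refinements of $\beta$. First I would standardize the semistandard immaculate tableaux: every immaculate tableau $U$ of shape $\alpha \vDash n$ with weight $x^U = \prod_i x_i^{v_i}$ can be associated to a standard immaculate tableau $\mathrm{std}(U)$ by replacing the entries with $\{1,\dots,n\}$ in a way that respects the weak and strict increase conditions. Concretely, I would read the entries of $U$ in a fixed order (say the immaculate reading order, left to right within a row from top rows down, but breaking ties among equal entries by reading bottom-to-top) and relabel them $1,2,\dots,n$; the key point is that this standardization is a bijection between semistandard immaculate tableaux of shape $\alpha$ and pairs consisting of a standard immaculate tableau of shape $\alpha$ together with a weakly increasing assignment of values to its entries that is \emph{strictly} increasing exactly across the immaculate descents.

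The heart of the argument is the following claim: for a fixed standard immaculate tableau $U$ of shape $\alpha$, the semistandard immaculate tableaux that standardize to $U$ contribute exactly the monomials appearing in $F_{\beta}$ where $\mathrm{set}(\beta) = \mathrm{Des}_{\dI}(U)$. To see this, I would observe that a semistandard filling compatible with the standard order must assign weakly increasing values $z_1 \le z_2 \le \cdots \le z_n$ to the cells labeled $1,2,\dots,n$ in $U$, subject to the constraint that $z_i < z_{i+1}$ precisely when positions $i$ and $i+1$ force a strict increase. A position forces a strict increase exactly when $i+1$ lies strictly above $i$ in $U$, because the leftmost-column strict increase and the standardization convention ensure this is the only source of a mandatory jump, whereas entries in the same or lower rows may repeat. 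Thus the set of forced strict ascents is precisely $\mathrm{Des}_{\dI}(U)$, and the generating function for such weakly increasing sequences with forced strict ascents at a prescribed set $S$ is exactly the fundamental quasisymmetric function $F_{\beta}$ with $\mathrm{set}(\beta) = S$ (this is the standard characterization of $F_\beta$ as a sum over monomials indexed by $P$-partition–type sequences).

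Summing over all standard immaculate tableaux $U$ of shape $\alpha$ then gives
$$
\dI_{\alpha} = \sum_{U} F_{\beta(U)} = \sum_{\beta} e_{\alpha,\beta} F_{\beta},
$$
where $e_{\alpha,\beta}$ counts the standard immaculate tableaux $U$ with $\mathrm{Des}_{\dI}(U) = \mathrm{set}(\beta)$, as desired. The main obstacle I anticipate is verifying rigorously that the standardization map is a well-defined bijection and, in particular, that a mandatory strict increase between consecutive standardized labels $i$ and $i+1$ occurs if and only if $i+1$ appears strictly above $i$; one must check carefully that no strict-increase constraint is introduced within a single row (where entries only weakly increase) and that the tie-breaking convention in the standardization is consistent with the row and leftmost-column conditions of Definition~\ref{SSYRTdef}. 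Once this correspondence between forced ascents and the immaculate descent set is established, the identification with $F_{\beta}$ is immediate from the definition of the fundamental basis as a sum of monomials over refinements.
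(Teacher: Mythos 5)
The paper does not prove this proposition; it is quoted from \cite{BerBerSalSerZab14}, so there is no internal proof to compare against. Your standardization ($P$-partition) argument is the standard route to such an expansion, and its overall architecture --- group the semistandard immaculate tableaux by their standardization and identify the generating function of each fiber with $F_\beta$ for $set(\beta)=Des_{\dI}(U)$ --- is sound.

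However, the tie-breaking convention you propose is backwards, and this is precisely the point you flagged as the crux. With $Des_{\dI}$ defined by ``$i+1$ strictly above $i$,'' equal entries must be standardized from \emph{top to bottom} (equivalently, in the immaculate reading order: higher rows first, left to right within a row), not bottom to top. Otherwise two consecutive labels $i,i+1$ carrying equal values can have $i+1$ strictly above $i$, i.e.\ $i\in Des_{\dI}(U)$ while $z_i=z_{i+1}$, and the claimed correspondence between forced strict ascents and the immaculate descent set fails. Concretely, take $\alpha=(2,2)$ and the immaculate tableau $V=\tableau{2&2\\1&2}$ of weight $x_1x_2^3$. Bottom-to-top tie-breaking standardizes $V$ to $\tableau{3&4\\1&2}$, whose immaculate descent set is $\{2\}$, so $V$ would be assigned to the fiber contributing $F_{(2,2)}$ --- but $x_1x_2^3\notin F_{(2,2)}$. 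Top-to-bottom tie-breaking correctly sends $V$ to $\tableau{2&3\\1&4}$, with descent set $\{1\}$ and $x_1x_2^3\in F_{(1,3)}$. A related point you should make explicit: the strict increase $z_i<z_{i+1}$ at a descent is forced not because equality would violate the immaculate tableau conditions (the filling with $z_i=z_{i+1}$ is often still a perfectly valid immaculate tableau, as $V$ above shows when its values are substituted into $\tableau{3&4\\1&2}$), but because equality would change which standard tableau the filling standardizes to; if one only requires validity of the filling rather than $\mathrm{std}(V)=U$, the monomial $x_1x_2^3$ is counted in two fibers and the expansion overcounts. With the corrected convention both directions of the bijection go through as you outline; in particular, every compatible weakly increasing $z$ does yield a valid immaculate tableau standardizing back to $U$, the leftmost-column condition holding because any two consecutive leftmost-column labels of $U$ are separated by at least one immaculate descent.
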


As an example, consider the decomposition of $\dI_{(1,2,1)}$ into the fundamental basis.  Figure~\ref{ImTabWeightsFig} shows that there are two standard immaculate tableaux of shape $(1,2,1)$.  Their immaculate descent sets are $ \{1,3\}$ and $\{1,2\}$.  Thus,  $\dI_{(1,2,1)} = F_{(1,2,1)}+F_{(1,1,2)}$.

\section{An Insertion and Recording Algorithm}{\label{Sec:insertion}}

The insertion procedure $k \rightarrow C$ given in \cite{HLMvW09} maps a positive integer $k$ into a composition tableau $C$.  We describe an analogous procedure (Procedure~\ref{proc:insertion}) that maps a positive integer $k$ into a Young composition tableau $T$ to produce a Young composition tableau $k \rightarrow T$.  Our procedure is equivalent to the procedure for composition tableaux in the sense that applying insertion to a composition tableau and then mapping to a Young composition tableau produces the same result as first mapping to a Young composition tableau and then applying insertion.  Therefore the fact that the procedure $k \rightarrow C$ produces a composition tableau immediately implies that our procedure produces a Young composition tableau.

\begin{procedure}{\label{proc:insertion}}
Let  $(c_1,d_1), (c_2,d_2), \ldots $ be the cells of the augmented diagram $\bar{T}$ listed in Young reading order.  Set $k_0:=k$ and let $i$ be the smallest positive integer such that $\bar{T}(c_i-1,d_i) \le k_0 < \bar{T}(c_i,d_i)$.  If such an $i$ exists, there are two cases.

{\bf Case 1.}  If $\bar{T}(c_i,d_i) =\infty$, then
place $k_0$ in cell $(c_i,d_i)$ and terminate the procedure.

{\bf Case 2.} If $\bar{T}(c_i,d_i) \neq \infty$, then set $k:=\bar{T}(c_i,d_i)$,
place $k_0$ in cell $(c_i,d_i)$, and repeat the procedure by  inserting $k$ into the sequence of cells $(c_{i+1},d_{i+1}), (c_{i+2},d_{i+2}), \ldots$.
In such a situation, we say that $\bar{T}(c_i,d_i)$ is
{\em bumped}.

If no such $i$ exists, begin a new row (containing only $k_0$) in the highest position in the leftmost column such that all entries below $k_0$ in the leftmost column are smaller than $k_0$ and terminate the procedure.  If the new row is not the top row of the diagram, shift all higher rows up by one.
\end{procedure}

The sequence of cells that contain elements which are bumped
in the insertion $k \rightarrow T$ plus the final cell which is added when
the procedure is terminated is called the {\em bumping path}
of the insertion.

\begin{figure}
\begin{center}
\begin{tikzpicture}
\node at (0,0) {$5$};
\node at (.5,0) {$\rightarrow$};
\node at (1.5,0) {$\tableau{6 & 8 \\ 3 & 4 & 7 \\ 2}$};
\node at (2.5,0) {$=$};
\node at (3.5,0) {$ \tableau{6 & {\bf 7} \\ 3 & 4 & {\bf 5} \\ 2 & {\bf 8}}$};
\end{tikzpicture}\caption{The insertion of $5$ into a Young composition tableau of shape $(1,3,2)$.}\label{fig:bumping}
\end{center}
\end{figure}

\begin{figure}
\begin{center}
\begin{tikzpicture}
\node at (0,0) {$2$};
\node at (.5,0) {$\rightarrow$};
\node at (1.5,0) {$\tableau{4 &5 \\1 & 3}$};
\node at (2.5,0) {$=$};
\node at (3.5,0) {$\tableau{4 &5 \\ {\bf 3} \\ 1 & {\bf 2}} $};
\end{tikzpicture}\caption{The insertion of $2$ into a Young composition tableau of shape $(2,2)$ in which a new row is created. }\label{fig:bumping2}
\end{center}
\end{figure}

\begin{example}\label{insertExample}
Let us consider the insertion of $5$ into a Young composition tableau of shape $(1,3,2)$ which is shown in Figure~\ref{fig:bumping}.  The first element bumped is the $7$ in column $3$.  This $7$ is replaced by $5$ and $7$ is then inserted into the remaining sequence of cells.  The $7$ then bumps the $8$ in column $2$ and $8$ is inserted into the remaining cells.  The $8$ is placed to the right of the $2$ and the procedure terminates.  The bumping path is therefore the sequence of cells $\{ (3,2), (2,3), (2,1) \}$.  Notice that the entries
of $\bar{T}$ in the bumping path must strictly increase as we proceed
in Young reading order.

\end{example}

\begin{example}\label{insertExample2}
In the insertion of $2$ into a Young composition tableau of shape $(2,2)$, shown in Figure~\ref{fig:bumping2}, the first element bumped is the $3$ in column $2$.  Since $1 <3 < 4$, a new row is created and the second row is moved up.

\end{example}
Let $U$ be a standard immaculate tableau and let $rw_{\dI}(U)$ be the immaculate reading word of $U$.  We define a procedure that maps $rw_{\dI}(U)$ to a pair $(P,Q)$ consisting of a standard Young composition tableau $P$ and a recording filling $Q$.

Begin with $(P,Q)=(\emptyset, \emptyset)$, where $\emptyset$ is the empty filling.  Let $k_1$ be the first letter in the word $rw_{\dI}(U)=k_1k_2\hdots k_g$.  Insert $k_1$ into $P$ using the insertion procedure described in Procedure~\ref{proc:insertion} and let $P_1$ be the resulting Young composition tableau.  Record the location in $P$ where the new cell was created by placing a ``1'' in $Q$ in the corresponding location and let $Q_1$ be the resulting filling.  Next assume the first $j-1$ letters of $rw_{\dI}(U)$ have been inserted.  Let $k_j$ be the $j^{th}$ letter in $rw_{\dI}(U)$.  Insert $k_j$ into $P_{j-1}$ and let $P_j$ be the resulting diagram.  Place the letter $j$ in the cell of $Q_{j-1}$ corresponding to the new cell in $P_j$ created from this insertion and let $Q_j$ be the resulting filling.

Notice that $P$ is a standard Young composition tableau since the insertion procedure produces a Young composition tableau.  The recording filling $Q$ has the same shape as $P$ by construction, but is not a Young composition tableau.  We now describe the properties of $Q$.   We begin with a definition.

\begin{definition}{\label{def:rowstrip}}
Let $Q$ be a filling of a diagram for $\beta\vDash n$ with the integers $\{1,\dots, n\}$, each appearing exactly once.  A \emph{row strip} of $Q$ is a maximal sequence of consecutive integers, none of which are in the same column of $Q$.  The \emph{row strip shape} of $Q$ is the composition $(\alpha_1,\alpha_2,\dots, \alpha_{\ell})$ where $\alpha_i$ is the length of the row strip sequence which starts with the number $\alpha_1+\alpha_2+\dots +\alpha_{i-1}+1$.
\end{definition}

For an example, consider the filling
\begin{center}
\begin{tikzpicture}
\node at (0,0) {$Q=$};
\node at (1.35,0) {$ \tableau{1&6\\2 &3&4&7\\5}$};
\node at (2.3,-.1) {.};
\end{tikzpicture}
\end{center}

The first row strip is $1$, the next row strip is $2,3,4$, and finally we have $5,6,7$.  It follows that the row strip shape of $Q$ is $(1,3,3)$.

\begin{lemma}
Assume $c \le d$ are inserted into a Young composition tableau $P$ to form $(d \rightarrow (c \rightarrow P))$.  The new cell created by the insertion of $d$ is strictly to the right of the new cell created by the insertion of $c$.  In particular, if a sequence $c_1 \le c_2 \le \cdots \le c_m$ is inserted into a Young composition tableau in order from smallest to largest, then the column indices of the resulting new cells strictly increase.
\end{lemma}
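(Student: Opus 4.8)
The plan is to deduce the ``in particular'' claim from the two-element statement and then to prove the latter by comparing the two bumping paths directly. For the reduction, set $P_0=P$ and $P_j=(c_j\to P_{j-1})$, so the new cell recorded at the $j$-th step is the cell created when $c_j$ is inserted into $P_{j-1}$. Applying the two-element statement with $c=c_j\le c_{j+1}=d$ and the tableau $P_{j-1}$ shows that the new cell created by inserting $c_{j+1}$ into $P_j=(c_j\to P_{j-1})$ lies strictly to the right of the new cell created by inserting $c_j$ into $P_{j-1}$. Since these are exactly the $(j+1)$-st and the $j$-th recorded cells, iterating over $j$ yields strictly increasing column indices, which is the ``in particular'' statement.

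For the two-element statement, write $R=(c\to P)$ and let $\pi_c,\pi_d$ be the bumping paths of $c\to P$ and $d\to R$, with the associated strictly increasing sequences of carried (bumped) values $c=a_0<a_1<\cdots$ and $d=b_0<b_1<\cdots$; the strict increase along a bumping path is the observation recorded in Example~\ref{insertExample}. Because the cells of a bumping path are visited in Young reading order, their column indices weakly decrease along the path, so the new (terminal) cell is the one of smallest column index. Writing $\col(c)$ for the column index of the new cell of $c\to P$ and $\col(d)$ for that of $d\to R$, proving the statement amounts to showing that $d\to R$ terminates in a column strictly to the right of $\col(c)$; in particular, since $\col(c)\ge 1$, one must rule out $d$ starting a new row, which always produces a column-$1$ cell.

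The main step is to run the two insertions in parallel through the columns of the reading order, from right to left, maintaining the invariant that the value currently carried by the $d$-insertion is $\ge$ the value carried by the $c$-insertion and that the $d$-insertion sits weakly to the right. The point is that wherever the $c$-insertion bumps an entry, the corresponding cell of $R$ now holds the strictly smaller bumped-from value, which only makes it easier for the larger carried value of the $d$-insertion to be absorbed weakly to the right of that position; combined with $d\ge c$ and the weakly-increasing row condition, this should force the $d$-insertion to find a terminating $\infty$-slot (Case~1 of Procedure~\ref{proc:insertion}) before it can descend as far left as column $\col(c)$. Concluding that this stop occurs strictly to the right then gives $\col(d)>\col(c)$.

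The delicate case, which I expect to be the main obstacle, is termination by \emph{creating a new row} in the leftmost column: this resets the column index to $1$ and, worse, shifts all higher rows upward, so the reading-order cell sequence for the insertion of $d$ differs from that seen during the insertion of $c$. The crux is to show that if the $c$-insertion ends by starting a new row, then the $d$-insertion cannot do so as well but is instead absorbed into an existing row strictly to the right; I expect this to follow from $d\ge c$ together with the strict increase of the leftmost column guaranteed by Procedure~\ref{proc:insertion}, but it will require carefully re-indexing the reading order after the row shift so that the parallel-scan invariant survives the change of shape.
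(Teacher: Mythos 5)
Your overall strategy is the paper's: compare the bumping path of $d \rightarrow (c \rightarrow P)$ against that of $c \rightarrow P$ in reading order, maintain a lower bound on the value carried by the $d$-insertion, and conclude that the $d$-insertion must stop strictly to the right of where the $c$-insertion stopped. The reduction of the ``in particular'' clause to the two-element case is fine. But the two places you flag with ``should force'' and ``I expect'' are exactly where the mathematical content lives, so as written this is an outline with genuine gaps rather than a proof.

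First, the invariant you state --- that the $d$-carried value dominates the $c$-carried value and sits weakly to the right --- is not the one that closes the argument. What is needed is a comparison at specific checkpoints: if $(i_1,j_1),\dots,(i_g,j_g)$ is the bumping path of $c\rightarrow P$ with bumped values $b_h=P(i_h,j_h)$, one must show that the value carried by the $d$-insertion as it scans $(i_h+1,j_h)$ is at least $b_h$. Establishing this requires a case split at each checkpoint: either the carried value bumps the entry in $(i_h+1,j_h)$, which is at least $b_h$ because rows of $P$ weakly increase and because that cell is untouched by the $c$-insertion (at most one entry per row is bumped); or it passes without bumping, and then one must rule out that the left neighbour blocks it --- which works only because that left neighbour is the small value $b_{h-1}$ just deposited by the $c$-insertion --- so the carried value is at least the entry in $(i_h+1,j_h)\ge b_h$. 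None of this case analysis appears in your sketch. The termination step is likewise asserted rather than proved: the $d$-insertion stops at or before $(i_g+1,j_g)$ because that cell is empty in $c\rightarrow P$ while its left neighbour $b_{g-1}$ is at most the carried value, so Case 1 of Procedure~\ref{proc:insertion} fires; this is also what rules out the $d$-insertion opening a new row in column $1$. Second, you explicitly leave unresolved the case where $c\rightarrow P$ terminates by creating a new row, with the attendant upward shift of rows. The paper's proof handles this by noting that the shift merely re-indexes rows and the same checkpoint argument applies; your proposal correctly identifies the difficulty but does not dispose of it.
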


\begin{proof}
We proceed by proving that the bumping path for $d$ terminates in a cell no farther in Young reading order than the cell to the right of the termination point for the insertion of $c$.  In particular, this implies that the insertion procedure for $d$ terminates in a column further to the right than the insertion procedure for $c$.

Let
$(i_1,j_1),(i_2,j_2), \dots, (i_\geee,j_\geee)$
be the bumping path for the insertion $P'=(c \rightarrow P)$.  We assume for now that the insertion of $c$ does not terminate in the leftmost column.
Set $b_0:=c$ and $b_h:=P(i_h,j_h),$ for $1 \le h \le \geee-1.$  In $P$, the cell $(i_\geee,j_\geee)$ is empty but the cell immediately to its left is not.
Note that $b_{h-1}=P'(i_h,j_h)$ for $1 \le h \le \geee-1.$
Since the entries in the rows of $P$ increase from left to right and the reading order goes by columns from right to left,
the insertion algorithm can bump at most one entry in each row.

 Suppose that $d\rightarrow P'$ does not terminate before reaching cell $(i_1+1,j_1)$.  Let $d_0'\ge d$ be the insertion entry as $d\rightarrow P'$ passes through $(i_1+1,j_1)$.
 Since $(i_1,j_1)$ was bumped in $c\rightarrow P$ and at most one entry in each row can be bumped,
 $P(i_1+1,j_1)=P'(i_1+1,j_1).$  If $P'(i_1+1,j_1)$ is bumped by $d_0'$, then since $P'(i_1+1,j_1)$ was immediately to the right of $b_1=P(i_1,j_1)$, we have
$$P'(i_1+1,j_1) =P(i_1+1,j_1) \ge P(i_1,j_1)=b_1.$$
If $P'(i_1+1,j_1)$
 is not bumped, then either $P'(i_1,j_1) > d_0'$ (which can't happen since $P'(i_1,j_1)=c \le d \le d_0'$) or $d_0' \ge P'(i_1+1,j_1)$.  If $d_0' \ge P'(i_1+1,j_1)$, we have
  $$d_0' \ge P'(i_1+1,j_1)=P(i_1+1,j_1) \ge P(i_1,j_1)=b_1.$$
So, in either case, both $d_0'$ and the entry $d_1$ that continues past $(i_1+1,j_1)$
in $d\rightarrow P'$ must be greater than or equal to $b_1=P(i_1,j_1)$.

Recall that
$$b_1=P'(i_2,j_2)<b_2=P(i_2,j_2) \le P(i_2+1,j_2).$$
If the cell $(i_2+1,j_2)$  in $d\rightarrow P'$ is reached, the current insertion entry $d_1'\ge d_1$ must either bump $P'(i_2+1,j_2)$
(which is greater than or equal to $P(i_2,j_2)=b_2$) or
pass through $(i_2+1,j_2)$ without bumping.  Since $d_1' \ge d_1 \ge b_1$, if $d_1'$ doesn't bump $ P'(i_2+1,j_2)$, we must have $$d_1' \ge P'(i_2+1,j_2)=P(i_2+1,j_2)\ge P(i_2,j_2)=b_2.$$
In either case, both $d_1'$ and the entry $d_2$ that continues past $(i_2+1,j_2)$ must be greater than or equal to $b_2$.
Continuing this reasoning implies that if the entry $d_{\geee-1}'$ reaches the cell $(i_\geee+1,j_\geee)$ then
$d_{\geee-1}' \ge b_{\geee-1}=P'(i_\geee,j_\geee)$.  Since the insertion $c\rightarrow P$ terminated,
$b_{\geee-1}$ was inserted at the end of a row of $P$ in location $(i_\geee,j_\geee)$.
Thus, $d_{\geee-1}'$ will be inserted (and the insertion algorithm will terminate) at or before $(i_\geee+1,j_\geee)$.
Note throughout all of this that if the procedure terminates earlier, the conclusion of the lemma is satisfied since that termination will be to the right of the new cell in $P'$.

If the insertion of $c$ terminates in the leftmost column, some of the indices in the bumping path may change but the same argument still applies.

Therefore if $c \le d$, the new cell created during the insertion of $d$ appears strictly to the right of the new cell created during the insertion of $c$.  Repeated application of this argument shows that  if a sequence $c_1 \le c_2 \le \cdots \le c_m$ is inserted into a Young composition tableau in order from smallest to largest, then the column indices of the resulting new cells strictly increase.
\end{proof}

It follows that as we insert a row of a standard immaculate tableau, the column indices of the corresponding elements of the recording filling strictly increase.  In addition, when we start inserting a new row of the standard immaculate tableau, the element we are inserting is the smallest element that has been inserted so far and thus it must placed in a new row at the bottom of the leftmost column.  Therefore the leftmost column of the recording filling must be strictly increasing from top to bottom.  Also, the insertion of a row from a standard immaculate tableau produces a row strip in the recording filling.  The row strip shape obtained from the insertion of a standard immaculate tableau $U$ of shape $\alpha=(\alpha_1, \alpha_2, \hdots , \alpha_{\ell})$ is $\alpha^{rev}$ rather than $\alpha$ since insertion of the immaculate reading word $rw_{\dI} (U)$ begins at the top and hence we insert a row of length $\alpha_\ell$, then $\alpha_{\ell-1}$ and so on.
Therefore, we have the following.

\begin{proposition}{\label{prop:RowStrips}}
 Let $Q$ be any recording filling obtained from inserting the immaculate reading word $rw_{\dI} (U)$ of a standard immaculate tableau $U$ of shape $\alpha$. Then the row strips start in the first (leftmost) column, the first (leftmost) column entries strictly increase from top to bottom, and the row strip shape is $\alpha^{rev}$.
 \end{proposition}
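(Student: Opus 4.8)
The plan is to track the insertion of $rw_{\dI}(U)$ one row of $U$ at a time. Reading $U$ in immaculate order processes its rows from top to bottom, so $rw_{\dI}(U)$ is the concatenation of blocks $B_\ell, B_{\ell-1}, \dots, B_1$, where $B_r$ lists the entries of the $r$-th row of $U$ from left to right and has length $\alpha_r$; since $U$ is standard, each $B_r$ is strictly increasing, and the labels that $Q$ assigns to $B_r$ form a run of consecutive integers. Applying the preceding lemma to the weakly increasing sequence $B_r$, the new cells it creates in $P$ have strictly increasing column indices, so the consecutive labels of $B_r$ sit in pairwise distinct columns of $Q$, which is exactly the distinct-column condition of Definition~\ref{def:rowstrip}.

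Next I would show that the first entry of each block is recorded in column $1$. The first entry of $B_r$ is $U(1,r)$, and for every $s>r$ and every admissible $j$ we have $U(1,r)<U(1,s)\le U(j,s)$: the first inequality because the leftmost column of $U$ strictly increases, and the second because rows weakly increase. Thus $U(1,r)$ is strictly smaller than every entry inserted before it, namely those lying in rows $s>r$. Feeding a value smaller than all current entries into Procedure~\ref{proc:insertion}, the inequality $\bar{T}(c_i-1,d_i)\le k_0$ fails at every cell with a filled left neighbor, so the ``no such $i$'' branch fires and $U(1,r)$ opens a new bottom row in the leftmost column. Hence every block begins in column $1$, which is the first assertion of the proposition.

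The remaining two assertions then follow. By the lemma the later entries of each block are recorded in columns $\ge 2$, so the column-$1$ cells of $Q$ are exactly the first-of-block labels; these are created in the order $U(1,\ell), U(1,\ell-1),\dots,U(1,1)$, each pushed to the bottom of the leftmost column beneath its predecessors, so reading that column from top to bottom returns the labels in increasing order, which is the second assertion. Finally, the first label of $B_{r-1}$ occupies column $1$, as does the first label of $B_r$, so appending the initial integer of $B_{r-1}$ to the run of $B_r$ would repeat column $1$; the maximal distinct-column runs of consecutive integers therefore break exactly at the block boundaries. Hence the row strips coincide with the blocks and, listed from the first strip onward, have lengths $\alpha_\ell,\alpha_{\ell-1},\dots,\alpha_1$, that is, the row strip shape is $\alpha^{rev}$.

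The step I expect to require the most care is the claim in the second paragraph that inserting a globally smallest value triggers the new-row branch of Procedure~\ref{proc:insertion} and lands at the bottom of the leftmost column; this forces one to check the defining inequalities of the procedure together with the convention for cells immediately to the left of the first column. Everything else is bookkeeping built on top of the preceding lemma.
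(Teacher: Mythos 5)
Your proposal is correct and follows essentially the same route as the paper: the paper's argument is precisely the observation that each row of $U$ inserts as a block whose new cells move strictly rightward (by the preceding lemma), while the first entry of each block is smaller than everything inserted so far and hence opens a new bottom row in column $1$, from which the column-$1$ monotonicity and the $\alpha^{rev}$ row strip shape follow. You simply carry out the same steps with more explicit bookkeeping (in particular the check that the maximal distinct-column runs break exactly at the block boundaries), which the paper leaves informal.
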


The recording filling we obtain from insertion of a standard immaculate tableau also satisfies a triple rule.  We describe it next.

\begin{definition}
Let $Q$ be a  filling of a diagram $D_\alpha$ for $\alpha\vDash n$ with the integers $\{1,\dots, n\}$.
We say that $Q$ satisfies the \emph{recording triple rule} if whenever $Q(i,j)>Q(i,\geea)$ where $j>\geea$ then $Q(i,j)>Q(i+1,\geea).$  (If cell $(i+1,\geea)$ is empty, we consider it to contain the entry infinity.)  This is equivalent to the statement that if $a>b$ then $a>c$ in the subarray pictured below.  (Here we assume that if the position immediately to the right of $b$ is empty, then $c=\infty$.)
\begin{center}
$\tableau{a}$

\vspace{5 pt}

$\hspace{12 pt} \tableau{b & c}$

\end{center}
\end{definition}

Note that the recording filling $Q$ failing the recording triple rule is equivalent to the situation during the insertion and recording process
in which we have
 the cell $(i,g)$ in $Q$ already filled and we place an entry in position $(i,j)$, where $j>\geea$, before we place an entry in $(i+1,\geea).$
If $(i,j)$ is filled in recording filling $Q$ before we fill $(i+1, \geea)$,
then $Q(i,g)<Q(i,j)<Q(i+1, \geea)$.

\begin{proposition}\label{recTabTripRuleProp}
Let $Q$ be a recording filling obtained by inserting the reading word from a standard immaculate tableau.  Then $Q$ satisfies the recording triple rule.
\end{proposition}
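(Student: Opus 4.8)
The plan is to argue by contradiction using the reformulation in the remark immediately preceding the statement: if $Q$ fails the recording triple rule, then there are cells with $Q(i,g) < Q(i,j) < Q(i+1,g)$ and $j>g$; equivalently, during the insertion of $rw_{\dI}(U)$ we fill $(i,g)$, then later fill $(i,j)$ with $j>g$, and only afterward fill $(i+1,g)$. I will show this chain of events cannot occur. The case $i=1$ is immediate: by Proposition~\ref{prop:RowStrips} the leftmost column of $Q$ strictly increases from top to bottom, so $Q(1,g) > Q(1,j)$ whenever $g<j$, contradicting $Q(1,g) < Q(1,j)$. Hence I may assume $i \ge 2$.

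The first key step records what $P$ looks like at the instant $(i,j)$ is created. At that moment $(i,g)$ is already present while $(i+1,g)$ is not (since $Q(i+1,g) > Q(i,j)$), so row $g$ has length exactly $i$ and row $j$ has length $i-1$. Because every intermediate tableau produced by Procedure~\ref{proc:insertion} is a standard Young composition tableau, I apply the Young composition triple rule with columns $i-1,i$ and rows $g<j$; since the cell $(i,j)$ is still empty, the conclusion $P(i,j) < P(i,g)$ reads $\infty < P(i,g)$ and fails, so the rule can only hold if its hypothesis fails, giving $P(i-1,j) > P(i,g)$. As the entry placed into $(i,j)$ is at least its left neighbor $P(i-1,j)$, I conclude $P(i,j) > P(i,g)$. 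Thus in any purported failure the value in $(i,j)$ strictly exceeds the value in $(i,g)$.

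The second, and harder, step analyzes the insertion that creates $(i+1,g)$. Its bumping path runs down column $i+1$ in Young reading order and terminates in the augmented cell at $(i+1,g)$, so the value arriving there is $\ge P(i,g)$. Since $j>g$, the path must pass the column-$(i+1)$ entry in row $j$ before reaching row $g$, and to avoid being captured there it must arrive at row $j$ with value $< P(i,j)$. Using the strictly increasing behavior of the entries along a bumping path (noted after Procedure~\ref{proc:insertion}), the inequality $P(i,j) > P(i,g)$ from the first step, the refined comparison of bumping paths from the insertion Lemma (which forces $(i,j)$ to lie in a strip created strictly before that of $(i+1,g)$), and the Young composition triple rule applied to the evolving tableau, I track the inserted value through column $i+1$ and reach a contradiction: it cannot simultaneously be small enough to slip past row $j$ and large enough to settle in row $g$.

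The step I expect to be the main obstacle is precisely this bumping-path analysis, and specifically the subcase in which row $j$ and the rows strictly between $g$ and $j$ all have length exactly $i$, so that their column-$(i+1)$ entries are the augmented $\infty$'s. There the Young composition triple rule gives no direct constraint, since its column index would fall outside the permitted range, and one must instead invoke the detailed dynamics of Procedure~\ref{proc:insertion} together with the strip structure of Proposition~\ref{prop:RowStrips}—in particular the fact that the new-row creations shift earlier strips upward, so that later row strips occupy strictly lower rows—to rule the configuration out.
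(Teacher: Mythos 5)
Your first step is sound and matches the paper's opening move: at the moment a purported violating entry would be placed at $(i,j)$, the Young composition triple rule applied to the intermediate tableau (with the empty cell $(i,j)$ read as $\infty$) forces $P(i-1,j) > P(i,g)$, so whatever lands in $(i,j)$ exceeds $P(i,g)$. But the heart of the proof is missing. The contradiction you announce in the second step --- that the value creating $(i+1,g)$ ``cannot simultaneously be small enough to slip past row $j$ and large enough to settle in row $g$'' --- does not follow from the inequalities you have established: since $P(i,g) < P(i,j)$, any value $v$ with $P(i,g) \le v < P(i,j)$ satisfies both requirements, so there is no tension between them. (You also consider only one of the two ways an insertion can pass $(i+1,j)$ without stopping --- the other is $v \ge \bar{T}(i+1,j)$ --- and the inequality $P(i,j) > P(i,g)$ was derived at an earlier stage of the algorithm and need not persist, since bumping replaces entries by smaller ones.) You explicitly flag this step as the main obstacle, so what you have is a plan rather than a proof.

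The paper closes the gap by analyzing the other insertion: it shows that no entry can ever be placed at $(i,j)$ while $(i,g)$ is filled and $(i+1,g)$ is still empty. Any value that reaches $(i,j)$ must have scanned the empty augmented cell at $(i+1,g)$ earlier in reading order without stopping there, hence was strictly less than $P(i,g)$ at that point; a two-case analysis of the subsequent bump chain (according to whether the entry finally placed at $(i,j)$ originates in column $i+1$ or in column $i$, using the Young composition triple rule at each link) shows the relevant entries stay below $P(i,g)$, contradicting the lower bound $\ge P(i-1,j) > P(i,g)$ from your first step. This orientation also covers a case your reformulation omits: the recording triple rule already fails if $(i+1,g)$ is \emph{never} filled, since an empty cell counts as $\infty$, and in that case there is no later insertion for your second step to analyze.
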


\begin{proof}
The only way $Q$ can fail to satisfy the triple rule is if $b<a<c$ in the triple of cells $\{a,b,c\}$.  To do this, entry $a$ must be placed into the diagram after $b$ but before $c$.  This can only happen if a higher row (the row ultimately containing $a$) has length one less than the lower row containing $b$ just before $a$ is added.  We assume that such an $a$ and $b$ do exist, and argue by contradiction.  Since the leftmost column increases top to bottom, $a$ and $b$ cannot be in the leftmost column in the following.

Consider the partial filling $P$ in Figure~\ref{figForTripRuleProof}.  This figure shows part of a Young composition tableau obtained by inserting part of a reading word of a standard immaculate tableau.  In the figure, suppose that $d=P(i-1,j)$ and $e=P(i,h)$ where $h<j.$
The cells in locations $(i,j)$ and $(i+1,h)$  (immediately to the right of the cells containing $d$ and $e$, respectively) are empty;
hence, we suppose that $P(i,j)=P(i+1,h)=\infty.$   Note that $e<d$; otherwise tableau $P$ would not satisfy the Young
composition triple rule since we would have $P(i-1,j)<P(i,h)$ which implies that $P(i,j)<P(i,h)$ but $P(i,j)=\infty.$ We will show that if we insert another element it cannot be placed in position $(i,j)$.

Suppose that it is possible to place an element in cell $(i,j).$
Let $g$ be the insertion element which passes through the position $(i+1,h)$.
(Such a $g$ exists since otherwise the insertion would end before reaching cell $(i,j)$.)  Since $g$ passes by $(i+1,h)$, we have $g<e$.  Now either $g$ is placed next to $d$ or it bumps something before.  If it is placed next to $d$, then $g>d$.  However, this implies that $e>g>d$ which is a contradiction.  Thus, $g$ must bump some element after passing by $e$.  We break into two cases depending on which column the element which is placed in position $(i,j)$ originates from.

{\bf Case 1.}  Suppose that the element which is placed in position $(i,j)$ originates in column $i+1$.  Then this element must be in a row below the row containing $e$ since $g$ starts to bump in a row below the one containing $e$.   Call this element $f$.  As one can see in Figure~\ref{figForTripRuleProof}, we must have that $f<e$ since otherwise the triple rule for $P$ is not satisfied.  Since $f$ is placed right of $d$, $f>d$.  However, that implies $e>f>d$ which is a contradiction.

{\bf Case 2.} Suppose that the element which is placed in position $(i,j)$ originates in column $i$.   This element must have been bumped.  Let $f_1, f_2,\dots , f_k$ be the elements bumped in column $i$ with $f_k$ being the element which is placed in position $(i,j)$.  Moreover, let $b_i$ be the element immediately to the left of $f_i$ for $1\leq i\leq k$.  Suppose that $f_0$ was the element which bumps $f_1$.  Then either $f_0=g$ or $f_0$ was in column $i+1$ in a row lower than the one containing $e$.  In either case $f_0$ must be smaller than $e$.  Since $f_0$ is placed next to $b_1$, $b_1<f_0<e$.  Thus, the triple rule for $P$ implies that $e>f_1$ since $e>b_1$.  Next, $f_1$ bumps $f_2$.  Thus $e>f_1>b_2$.  Again, the triple rule for $P$ implies that $e>f_2$.  Continuing this reasoning implies that $e>f_k$.  However, this implies that $e>f_k>d$ which is a contradiction.

It follows that when we create the recording filling it is not possible to place an element in a column such that there is already an element in that column below it with nothing immediately to its right.  Therefore the recording filling must satisfy the recording tableau triple rule.
\end{proof}

\begin{figure}
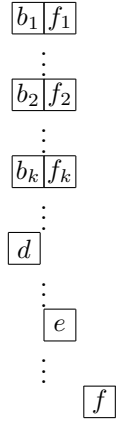


\begin{center}

$\tableau{b_1 & f_1}$

$
\vdots
$

$\tableau{b_2 & f_2 }$

$
\vdots
$

$\tableau{b_k & f_k}$

$
\vdots
$

$\hspace{-15 pt}\tableau{d }$

$
\vdots
$

$\hspace{0 pt}\tableau{&e}$

$
\vdots
$

$\hspace{30 pt}\tableau{&f }$

\end{center}

\caption{Figure from the proof of Proposition~\ref{recTabTripRuleProp}}
\label{figForTripRuleProof}

\end{figure}

\begin{definition}\label{dirtDef}
Let $\beta$ be a composition of $n$.  A filling of a diagram of shape $\beta$ with exactly $\{1,\dots, n\}$ is a \emph{dual immaculate  recording tableau} (DIRT)  if it has the following properties:
\begin{enumerate}
\item The rows increase from left to right.
\item The row strips start in the first (leftmost) column.
\item The first (leftmost) column increases from top to bottom.

\item The  recording triple rule is satisfied.
\end{enumerate}
\end{definition}

Combining Propositions~\ref{prop:RowStrips}~and~\ref{recTabTripRuleProp}, we get the following corollary.

\begin{cor}{\label{cor:dirt}}
If $U$ is a standard immaculate tableau and $Q$ is the recording filling obtained by insertion of the immaculate reading word $rw_{\dI}(U)$, then $Q$ is a DIRT.
\end{cor}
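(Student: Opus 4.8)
The plan is to check the four defining conditions of a DIRT (Definition~\ref{dirtDef}) one at a time, drawing conditions (2), (3), and (4) directly from the two propositions and supplying condition (1) from the structure of the insertion. First I would recall that, by construction, $Q$ is a filling of the shape $\beta$ of the standard Young composition tableau $P$ with the entries $\{1,\dots,n\}$, each appearing exactly once, where the entry placed in a cell records the step at which that cell was created. With this in hand, Proposition~\ref{prop:RowStrips} immediately yields conditions (2) and (3): the row strips begin in the leftmost column and the leftmost column strictly increases from top to bottom. Likewise, Proposition~\ref{recTabTripRuleProp} yields condition (4), namely that $Q$ satisfies the recording triple rule.

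It remains to establish condition (1), that the rows of $Q$ increase from left to right; this is the one piece not handed to us by the two propositions, so it is where the real work lies. I would argue it directly from Procedure~\ref{proc:insertion}. Each insertion of a single letter terminates in exactly one of two ways: either it ends in Case 1 by placing the carried entry into an $\infty$-cell, which appends a new cell at the right end of an existing row, or no suitable $i$ exists and a new one-cell row is created in the leftmost column. Thus every row of $P$ is born as a single cell in column $1$ and thereafter grows only by having cells appended at its right end, one at a time.

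Because the entry of $Q$ in each cell records the (strictly increasing) time step of that cell's creation, and the cells of a fixed row, listed in the order they are created, occupy columns $1,2,3,\dots$ in turn, the entries of $Q$ strictly increase from left to right along each row; this is condition (1). The one point to be careful about is that when a new row is created below an existing row the higher rows shift up by one position, but such a shift moves each row rigidly and preserves its internal left-to-right order, so the conclusion is unaffected. Having verified all four conditions, $Q$ is a DIRT. The only genuine obstacle is condition (1): one cannot simply invoke the earlier Lemma, since a single row of $Q$ may collect cells from several different row strips, and the correct replacement is the observation that new cells are always appended at a row's right end (or else open a new row in column $1$), so that \emph{later} creation forces a \emph{rightward} position within each row.
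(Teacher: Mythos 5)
Your proof is correct and follows the same route as the paper, whose entire proof is the single line ``Combining Propositions~\ref{prop:RowStrips} and~\ref{recTabTripRuleProp}, we get the following corollary.'' You are in fact somewhat more careful than the paper: your explicit verification of condition (1) --- that every row of $P$ is born as a single cell in column $1$ and grows only by cells appended at its right end, so the recorded time steps increase along each row --- fills in a step the paper leaves implicit, and your closing remark that the earlier bumping-path Lemma alone would not suffice (since a single row of $Q$ may collect cells from several row strips) identifies exactly why that direct argument is the right one.
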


Let $U$ be a standard immaculate tableau and let $P$ be the standard Young composition tableau obtained from insertion of the immaculate reading word $rw_{\dI}(U)$.
Recall that the \emph{immaculate descent set, $Des_{\dI}(U)$}, of a standard immaculate tableau $U$ is the subset of
 $\{1, \hdots, n-1\}$ consisting of all entries $i$ of $U$ such that $i+1$ appears strictly above $i$ in $U$; the \emph{Young descent set, $Des_{\YQS}(P)$}, of a standard Young composition tableau $P$ is the subset of $\{1, \hdots, n-1\}$ consisting of all entries $i$ of $P$ such that $i+1$ appears weakly to the left of $i$ in $P$.  We will now show that insertion preserves the descent set.  We  begin with two lemmas.

\begin{lemma}\label{desPreservedLem1}
Let $U$ be a standard immaculate tableau and let $P$ be the tableau obtained by insertion from the immaculate reading word $rw_{\dI}(U)$.  Suppose that $i\in Des_{\dI}(U)$.  Then we have the following.
\begin{enumerate}
\item[(a)] When $i$ is initially inserted, it is weakly to the right of $i+1$.
\item[(b)] If $i$ is inserted into the same column as $i+1$ and is below $i+1$, then it is in the first (leftmost) column.
\item [(c)]  If $i$ is bumped during the insertion process, then it is still weakly right of $i+1$.
\item[(d)]   We have $i\in Des_{\YQS}(P)$.
\end{enumerate}
\end{lemma}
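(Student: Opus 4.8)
The plan is to isolate a single mechanism and apply it repeatedly. Because $i\in Des_{\dI}(U)$ means $i+1$ lies strictly above $i$ in $U$, the letter $i+1$ precedes $i$ in the immaculate reading word $rw_{\dI}(U)$, so by the time the value $i$ is inserted the value $i+1$ already occupies some cell of the current tableau. The observation driving everything is this: whenever the value $i$ is being inserted and $i+1$ sits in cell $(c,d)$, the insertion test $\bar{T}(c-1,d)\le i<\bar{T}(c,d)$ of Procedure~\ref{proc:insertion} is automatically satisfied at $(c,d)$. Indeed $\bar{T}(c,d)=i+1>i$, and since the tableau is standard its rows strictly increase, so the entry immediately left of $i+1$ is $<i+1$, hence $\le i$ (and if $i+1$ is in the leftmost column the left-hand inequality is vacuous). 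Because the cells are scanned in Young reading order, from the rightmost column leftward, the scan must halt at or before $(c,d)$; thus the value $i$ comes to rest in a column weakly to the right of the column of $i+1$. Applying this at the first insertion of $i$ gives (a).

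For (b), suppose that after its insertion the value $i$ lies in the same column as $i+1$ but below it. By the reading-order argument just given, the cell where $i$ comes to rest is at or before $(c,d)$ in Young reading order; within a single column the reading order runs top to bottom, so $i$ cannot land strictly below $i+1$ in that column by a direct fit. The only remaining possibility is that $i$ bumps $i+1$ out of $(c,d)$ and $i+1$ is subsequently relocated above $i$ in the same column. A bumped entry travels forward in reading order, i.e. downward in its column or into columns further left, so it can be re-seated above its former row only by initiating a new row, and new rows are created exclusively in the leftmost column. Hence that shared column is the leftmost one, which is (b).

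Part (c) is the same observation applied to a re-insertion. If the value $i$ is later bumped by some entry $v$, then $v<i<i+1$, so $i$ begins inserting again from the cell just after its former position in reading order; since $i$ was weakly right of $i+1$, the cell of $i+1$ lies weakly later in the scan and is therefore reached, and the fit test again succeeds there, so $i$ re-lands weakly right of $i+1$. Two further points close the loop needed for (d): the value $i+1$ can never bump $i$, because bumping requires the inserting entry to be smaller than the entry it displaces and $i+1>i$; and whenever $i+1$ is itself bumped it, too, only moves forward in reading order, i.e. weakly to the left, which can only strengthen the relation ``$i$ weakly right of $i+1$.'' Consequently this relation survives every step of the algorithm, so in the final tableau $P$ the value $i+1$ is weakly left of $i$, which is exactly the statement $i\in Des_{\YQS}(P)$ of (d).

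I expect the delicate point to be the bookkeeping in (c): one must be sure that each re-insertion scan of $i$ genuinely reaches the current cell of $i+1$, which is where the invariant ``$i$ weakly right of $i+1$'' is used as an induction hypothesis, together with (b) guaranteeing that in any non-leftmost column $i+1$ is not seated above $i$. One must also track the leftmost-column new-row mechanism carefully, since there $i$ and $i+1$ are kept in sorted order within the strictly increasing first column and the ``weakly right'' relation degenerates to ``same column.'' Handling these leftmost-column degeneracies uniformly with the generic bumping case is the main obstacle.
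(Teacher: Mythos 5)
Your overall strategy is the same as the paper's: track the relative column positions of $i$ and $i+1$ through the algorithm, using the observation that the fit test automatically succeeds at the cell of $i+1$ (its left neighbour is $<i+1$, hence $\le i$) and that bumped entries only move weakly left in reading order. But your treatment of the leftmost column is incorrect, and that is exactly where parts (b) and (c) live. In Procedure~\ref{proc:insertion} the fit test is \emph{never} satisfied at a leftmost-column cell --- entries there cannot be bumped; when no cell fits, a \emph{new row} is opened in the leftmost column (see Example~\ref{insertExample2}, where the bumped $3$ does not displace the $4$ sitting in column~$1$ but instead creates a new row). You instead declare the left-hand inequality ``vacuous'' there, so your case analysis for (a) and (b) studies a scenario that cannot occur ($i$ bumping $i+1$ out of column~$1$, with $i+1$ then re-seated above it via a new row) while omitting the scenario that actually produces the configuration of (b): $i$ scans past the unbumpable $i+1$ in column~$1$, finds no fit, and is placed in a new row \emph{below} $i+1$. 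Your stated conclusions for (a) and (b) happen to survive, but only because column~$1$ is the last column scanned, not because ``the scan halts at or before $(c,d)$,'' which is false in this case.

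The real damage is in (c). Your claim that a re-inserted $i$ ``reaches the cell of $i+1$'' rests on that cell lying weakly later in reading order, which fails precisely when $i$ sits below $i+1$ in the same column --- the configuration that (b) says can occur, in column~$1$. You flag this yourself as ``the main obstacle'' but never resolve it. The resolution is the one fact you left out: leftmost-column entries are never bumped, so if $i$ shares column~$1$ with $i+1$ then $i$ can never be bumped at all and (c) is vacuous there; in every other case $i$ strictly precedes $i+1$ in reading order and your argument goes through. Without that observation the invariant you need for (c) and hence (d) does not close.
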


\begin{proof}
(a) Since  $i\in Des_{\dI}(U)$, we know that $i+1$ appears in a row above $i$ in $U$.  It follows that $i+1$ is inserted before $i$.   If the insertion of $i$ terminates before it
reaches the cell containing $i+1$, then $i$ must be weakly right of $i+1$.  If $i$ bumps $i+1$, then $i$ is still weakly right of $i+1$ as bumping moves elements weakly left.  Finally, if $i$ reaches $i+1$ and cannot bump $i+1$ then either the element to the left of $i+1$ is larger than $i$ or $i+1$ is in a row by itself.  Since rows increase, it cannot be that the element to left of $i+1$ is larger than $i$ since then it would also be larger than $i+1$.  Thus, $i+1$ must be in a row by itself.  It follows that $i$ must be put into the first (leftmost) column and so is weakly right of $i+1$.

(b) From the proof of (a), the only way for this to happen is if $i$ does not bump $i+1$.  However, in that case both $i$ and $i+1$ are in the first column as desired.

(c)  First, note that if $i+1$ is ever bumped  then it moves weakly left.  Therefore by (a) and (b)  before $i$ is ever bumped it must either precede $i+1$ in reading order or it is in the first (leftmost) column with $i+1$.  Since elements in the first (leftmost)  column cannot be bumped, if $i$ is ever bumped it must precede $i+1$ in reading order.  Applying the same reasoning as in (a) shows $i$ is weakly right of $i+1$.

(d)  Note that if $i+1$ is ever bumped and is weakly left of $i$, then it remains weakly left since bumping always moves weakly left.  From (c), we know that bumping $i$ always keeps $i$ weakly right of $i+1$.  Combining this with (a), we see that $i$ is always weakly right of $i+1$ and so $i\in Des_{\YQS}(P)$.
\end{proof}

\begin{lemma}\label{desPreservedLem2}
Let $U$ be an immaculate tableau and let $P$ be the tableau obtained by insertion from the immaculate reading
word $rw_{\dI}(U)$.  Suppose that $i\not\in Des_{\dI}(U)$.  Then we have the following.
\begin{enumerate}
\item[(a)] When $i+1$ is inserted it is strictly to the right of $i$.
\item[(b)] If $i+1$ is bumped, it is still strictly to the right of $i$.
\item[(c)] We have $i\not \in Des_{\YQS}(P)$.
\end{enumerate}
\end{lemma}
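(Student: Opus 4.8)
The plan is to track the two consecutive entries $i$ and $i+1$ through the insertion, mirroring the structure of Lemma~\ref{desPreservedLem1}. The single principle I would use throughout is that an inserted value comes to rest in the first cell, in Young reading order, whose left neighbor is $\le$ the value and whose own entry exceeds the value; since Young reading order scans the columns from right to left, exhibiting one legal cell in some column forces the actual landing cell to lie weakly to its right. First I would record the starting configuration: because $i\notin Des_{\dI}(U)$, the entry $i+1$ lies weakly below $i$ in $U$, so $i$ precedes $i+1$ in $rw_{\dI}(U)$ and is inserted first.

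For part (a), suppose $i$ occupies cell $(c,d)$ when $i+1$ is inserted. The cell immediately to its right has left neighbor $i\le i+1$, and its own entry is either $\infty$ (if $i$ ends its row) or a value $>i$ distinct from $i+1$, hence $\ge i+2$; in either case the entry exceeds $i+1$, so this is a legal landing site for $i+1$. Since the reading order exhausts every column to the right of column $c$ before reaching column $c$, the first legal cell lies in a column $>c$, and $i+1$ lands strictly to the right of $i$.

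For part (b) I would run the same argument at the instant $i+1$ is bumped, the only new point being that the cell to the right of $i$ must still belong to the unread portion of the diagram (the cells following the former position of $i+1$ in reading order). Writing $(c_i,d_i)$ and $(c_{i+1},d_{i+1})$ for the positions of $i$ and $i+1$, with $c_{i+1}>c_i$ by the running hypothesis, availability fails only when $c_{i+1}=c_i+1$ and $d_i\ge d_{i+1}$. If $d_i=d_{i+1}$ then $i$ is the left neighbor of $i+1$, and bumping $i+1$ would require an inserting entry that is $\ge i$, yet $<i+1$ and distinct from $i$, which is impossible. If $d_i>d_{i+1}$ I would invoke the Young composition triple rule for the rows $d_{i+1}<d_i$ at column $c_i$: the equality $P(c_i,d_i)=i\le i+1=P(c_i+1,d_{i+1})$ would force the entry to the right of $i$ to be $<i+1$, contradicting that it is $\infty$ or $\ge i+2$. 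Hence this configuration cannot occur in the valid tableau, the cell to the right of $i$ is available, and the reading-order argument again places $i+1$ strictly to the right of $i$. I expect this triple-rule exclusion to be the main obstacle, as it is the one place where the special structure of Young composition tableaux, rather than generic insertion, is essential.

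Finally, for part (c) I would maintain the invariant that $i+1$ stays strictly to the right of $i$ through every later insertion, which yields $i\notin Des_{\YQS}(P)$ in the final tableau. In a single insertion at most one of $i,i+1$ is bumped: the bumped entries strictly increase in Young reading order and $i<i+1$, so bumping both would force the cell of $i$ to precede that of $i+1$ in reading order, whereas the invariant places $i+1$ strictly to the right of $i$ and hence earlier in reading order, a contradiction. Thus if $i+1$ is bumped, part (b) keeps it right of $i$; if $i$ is bumped it moves weakly left while $i+1$ stays fixed, again preserving the invariant; and if neither moves there is nothing to check.
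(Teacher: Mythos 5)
Your proof is correct and follows essentially the same route as the paper's: part (a) via the observation that the cell immediately to the right of $i$ is a legal landing site that $i+1$ cannot pass, part (b) by excluding the adjacent configuration and the column-adjacent configuration with $i+1$ strictly below (the latter via the Young composition triple rule) before reducing to the argument of (a), and part (c) by propagating the invariant. Your extra remark that at most one of $i$ and $i+1$ can be bumped in a single insertion, because entries along a bumping path strictly increase in reading order, is a small point of rigor that the paper leaves implicit.
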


\begin{proof}
(a) Since $i$ is not in the descent set of $U$,  $i+1$ is inserted after $i$.  Note that there must be something to the right of $i$ which is larger than $i+1$; recall that we consider empty spaces next to filled cells as containing $\infty$.  If $i+1$ passes through
the cell to the right of $i$, it will bump its entry or the insertion will terminate with the placement of $i+1$ in this position.  If $i+1$ is inserted before the process reaches this cell, then $i+1$ is still strictly to the right of $i$.

(b)  First note that if $i$ is immediately to the left of $i+1$, then $i+1$ cannot be bumped.  This is because if $a$ bumps $i+1$, then $a<i+1$.  Since $i$ and $i+1$ are adjacent it must be that $i<a$.  It follows that $i<a<i+1$, which is impossible.

Moreover, note that if $i+1$ is in the column directly to the right of the column containing $i$ in $P$, then $i+1$ must appear in a row weakly above the row containing $i$.  This is because if it did not, the tableau would not satisfy the triple rule as $i+1>i$ and whatever is immediately to the right of $i$ is larger than $i+1$.

If $i+1$ is bumped, then it will be inserted into the tableau obtained by removing everything above and to the right.  Since $i$ is strictly to the left of $i+1$ and is not immediately to the left, it must be that we are now inserting $i+1$ into a tableau whose Young reading word contains the entry in the position immediately to the right of $i$.  Moreover, this position contains an element larger than $i$ and so also larger than $i+1$.  Thus we may apply the same reasoning as in the proof of  (a).

(c) When the $i+1$ is inserted it is strictly to the right of $i$ by part (a).  If $i+1$ gets bumped, it will still be strictly to the right of $i$ by  (b).  Finally, if $i$ gets bumped, it can only move weakly left and so will still be strictly left of $i+1$.  Thus, $i\not\in Des_{\YQS}(P)$.
\end{proof}

Combining Lemma~\ref{desPreservedLem1} and Lemma~\ref{desPreservedLem2} we get the following.

\begin{proposition}\label{desPreservedProp}
Let $U$ be a standard immaculate tableau and let $P$ be the Young composition tableau obtained from insertion of the reading word $rw_{\dI}(U)$.  Then $Des_{\dI}(U) = Des_{\YQS}(P)$.
\end{proposition}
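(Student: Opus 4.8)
The plan is to prove the equality $Des_{\dI}(U) = Des_{\YQS}(P)$ by a membership argument carried out element by element over $i \in \{1, \dots, n-1\}$, since the two preceding lemmas already encode exactly the two directions of the desired set equality. There is nothing left to estimate: the proposition is a formal assembly of Lemma~\ref{desPreservedLem1} and Lemma~\ref{desPreservedLem2}.

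First I would fix an arbitrary $i \in \{1, \dots, n-1\}$ and split into the two mutually exclusive and exhaustive cases according to whether $i$ is an immaculate descent of $U$. If $i \in Des_{\dI}(U)$, then Lemma~\ref{desPreservedLem1}(d) immediately gives $i \in Des_{\YQS}(P)$; ranging over all such $i$ yields the containment $Des_{\dI}(U) \subseteq Des_{\YQS}(P)$. For the reverse containment I would invoke the contrapositive form supplied by the second lemma: if $i \notin Des_{\dI}(U)$, then Lemma~\ref{desPreservedLem2}(c) gives $i \notin Des_{\YQS}(P)$, which is precisely the statement $Des_{\YQS}(P) \subseteq Des_{\dI}(U)$. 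Combining the two containments produces the biconditional $i \in Des_{\dI}(U) \iff i \in Des_{\YQS}(P)$ for every $i$, so the two descent sets coincide.

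The reason this last step is so short is that all of the genuine combinatorial content has already been isolated in Lemmas~\ref{desPreservedLem1} and~\ref{desPreservedLem2}, where one tracks how the relative horizontal positions of $i$ and $i+1$ evolve through the insertion of $rw_{\dI}(U)$. The delicate point there — and what I would regard as the real obstacle, were the lemmas not already available — is establishing that these relative positions are stable under bumping: an immaculate descent $i$ (with $i+1$ inserted earlier and in a higher row) must be placed weakly right of $i+1$ and must remain weakly right of it each time either entry is bumped, using that bumping moves entries weakly left together with the leftmost-column and triple-rule constraints; dually, a non-descent forces $i+1$ strictly right of $i$ and keeps it there throughout. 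Once those positional invariants are secured by the lemmas, the present Proposition requires no further work beyond the case split above.
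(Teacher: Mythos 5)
Your proof is correct and matches the paper exactly: the paper states Proposition~\ref{desPreservedProp} as an immediate consequence of combining Lemma~\ref{desPreservedLem1}(d) and Lemma~\ref{desPreservedLem2}(c), which is precisely the two-containment case split you carry out.
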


{Proposition} \ref{desPreservedProp} will be critical in the proof of Theorem  \ref{thm:main} because it implies that the fundamental quasisymmetric functions associated to the standard immaculate tableaux and the fundamental quasisymmetric functions associated to the standard Young composition tableaux obtained from insertion are the same.

We now define an algorithm which we call \emph{rapture} to remove an entry from the end of a row of a Young composition tableau $T$.  We will use terminology such as ``rapture, virtuous, eviction, escape route, and DIRT'' in a tongue-in-cheek manner as a play on the terminology and origins of the ``\emph{immaculate} quasisymmetric functions'' and the quasisymmetric Schur functions (which were originally generated by ``skyline fillings'' with ``basements'').  We begin with a definition.

\begin{definition}
An entry $T(i,j)$ in a Young composition tableau $T$ is said to be \emph{virtuous} if the following hold.
\begin{enumerate}
\item The entry $T(i,j)$  is greater than all entries below it in its column.
\item The entry $T(i,j)$ is at the end of its row.
\item Any other element which is at the end of its row in column $i$ is in a row above $j$.

\end{enumerate}
\end{definition}

  The output $\mTp$ of the following procedure $k \leftarrow T$ is a Young composition tableau $T'$
together with either a positive integer $m$
such that $T=m \rightarrow T'$ or $m=\infty$.
If $m=\infty$, then there is no way to insert a positive integer into $T'$ to produce $T$.

\begin{procedure}{\label{proc:rapture}}
Let $T$ be a Young composition tableau and let
$(c_1,d_1),(c_2,d_2),$ $\dots, (c_\geed, d_\geed)$ be the corresponding cells of the augmented tableau $\bar{T}$ listed in the Young reading order and let
$k=\bar{T}(c_j,d_j)$  where $k$ is a virtuous entry of some row of $T$.
If $c_j\not=1$, let $\bar{T}(c_j,d_j)=\infty$, delete the $\infty$ immediately to the right of $(c_j,d_j)$, and set $k_0:=k$.  If $c_j=1$ then $k$ is in a row by itself.  In that situation, shift all the rows above row $d_j$ down by one to remove the gap created by the removal of $k$.  In either case, this removes $k$ from the diagram and initiates the procedure.  Let $i$ be the largest integer less than $j$ such that $\bar{T}(c_i-1,d_i) \le k_0 \le \bar{T}(c_i+1, d_{i})$, where if $(c_i+1,d_i)$ is not in the augmented diagram we set $\bar{T}(c_i+1,d_i):=\infty$.

\noindent
{\bf Case 1.}  If $\bar{T}(c_i,d_i)=\infty$, then place $k_0$ in cell $(c_i,d_i)$, set $m=\infty$ and $T'= T$, and terminate the procedure.

\noindent
{\bf Case 2.}  If $\bar{T}(c_i,d_i) \neq \infty$ and $\bar{T}(c_i,d_i) \ge k_0$, then continue the procedure with $k$ as is, moving on to the next smaller
$i$ such that $\bar{T}(c_i-1,d_i) \le k_0 \le \bar{T}(c_i+1, d_i)$.

\noindent
{\bf Case 3.}  If $\bar{T}(c_i,d_i) \neq \infty$ and $\bar{T}(c_i,d_i)<k_0$, then set $k=\bar{T}(c_i,d_i)$, replace $\bar{T}(c_i,d_i)$ by $k_0$, and repeat the procedure with this new $k$ and $\bar{T}$.  In such a situation, we say that $\bar{T}(c_i, d_i)$ is \emph{evicted}.

\noindent
If no such $i$ exists, or all such $i$ fall under Case 2, set $m=k_0$, $T'=T\setminus {(c_j,d_j)}$, and terminate the procedure.

\end{procedure}

The initial cell containing $k$ together with the sequence of cells that contain elements which are evicted during the rapture $k \leftarrow T$ are called the {\it escape route} for the rapture procedure.  Note that the procedure could be applied to an unvirtuous entry.  However, such a procedure would produce a filling that is not a Young composition tableau.

\begin{figure}
\begin{center}
\begin{tikzpicture}
\node at (1.25,0) {$8$};
\node at (1.75,0) {$\leftarrow$};
\node at (3,0) {$ \tableau{6 & {\bf 7} & \infty \\ 3 & 4 & {\bf 5} & \infty\\ 2 & {\bf 8}& \infty }$};
\node at (5,0) {$=$};
\node at (5.75,0) {$\Bigg ($};
\node at (7,0) {$ \tableau{6 & { 8} & \infty \\ 3 & 4 & { 7} & \infty\\ 2 &  \infty }$};
\node at (8,-.25) {$,$};
\node at (8.25,0) {$5$};
\node at (8.5,0) {$\Bigg )$};
\end{tikzpicture}\caption{Rapture of 8 from an augmented Young composition tableau on the left produces the  augmented tableau on the right.  The elements which are evicted are in bold.}\label{fig:rapture}
\end{center}

\end{figure}

\begin{example}\label{raptureExample1}
As an example of rapture, consider the leftmost tableau in Figure~\ref{fig:rapture}.  In the rapture of the 8, the first element which is evicted is the $7$.  The $8$ replaces the $7$ and the rapture continues with the $7$.   Then the $7$ evicts the $5$.  Finally, $5$ cannot evict any other elements so $5$ is the output.  The reader may have noticed that this example of rapture is  the inverse of Example~\ref{insertExample}.  Moreover, the escape route  in this example is the reverse of the bumping path in that example.  It is not a coincidence that rapture is the inverse of insertion as we will see in Theorem~\ref{lem:UninsertInverse}.
\end{example}

Before we consider this theorem, let us consider one more example of rapture, where the rapture starts in the leftmost column.

\begin{figure}
\begin{center}
\begin{tikzpicture}
\node at (1.25,0) {$3$};
\node at (1.75,0) {$\leftarrow$};
\node at (3,0) {$ \tableau{4&5 & \infty \\ {\bf 3} & \infty\\ 1 & {\bf 2}& \infty }$};
\node at (5,0) {$=$};
\node at (5.75,0) {$\Bigg ($};
\node at (6.75,0) {$ \tableau{4&5 & \infty \\  1 & 3& \infty }$};
\node at (7.5,-.45) {$,$};
\node at (7.75,0) {$2$};
\node at (8.,0) {$\Bigg )$};
\end{tikzpicture}
\caption{Rapture of 3 from an augmented Young composition tableau on the left produces the  augmented tableau on the right.  The elements which are evicted are in bold.}\label{fig:rapture2}
\end{center}

\end{figure}

\begin{example}
Consider the leftmost tableau in Figure~\ref{fig:rapture2}.  When 3 is raptured, the top and bottom rows come together (i.e., we remove the empty row).  Then the 3 evicts the 2.  The rapture then continues with the 2.  Since 2 cannot evict any elements, the output is $2$.  Note that once again rapture is the inverse of insertion.  However, continuing this rapture procedure would produce a word which could not have come from a standard immaculate tableau.  This is because rapturing another element from the augmented tableau on the right of Figure~\ref{fig:rapture2} would have to output a number larger than 2.   But then this word cannot be a reading word of an immaculate tableau  since the leftmost column of the corresponding immaculate tableau would not be decreasing.
\end{example}

\begin{lemma}
Let $k$ be a virtuous element of the Young composition tableau $T$.  If  $k\leftarrow T = (T',m)$ then $T'$ is a Young composition tableau.
\end{lemma}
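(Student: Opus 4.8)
The plan is to verify directly that the filling $T'$ produced by the rapture $k \leftarrow T$ satisfies the three defining conditions of a Young composition tableau: rows weakly increasing from left to right, the leftmost column strictly increasing from bottom to top, and the Young composition triple rule (Figure~\ref{YCTTripRuleFig}). The case where the procedure halts under Case~1 is trivial, since there $T'=T$ and $T$ is already a Young composition tableau by hypothesis, so I would focus on the normal termination. Since rapture alters entries only along the escape route---the initial virtuous cell together with the cells whose entries are evicted---and leaves every other entry of $T$ untouched, it suffices to check the three conditions in the neighborhood of the escape route; elsewhere they are inherited from $T$.

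First I would dispatch the two easier conditions. Whenever Case~3 fires at a cell $(c_i,d_i)$, the entry there is replaced by $k_0$ subject to the eligibility constraint $\bar{T}(c_i-1,d_i) \le k_0 \le \bar{T}(c_i+1,d_i)$; the outer inequalities say precisely that $k_0$ still lies weakly between its left and right neighbors, so row $d_i$ remains weakly increasing after the replacement. The row condition is likewise preserved at the removal of the virtuous cell, since by definition that cell sits at the end of its row. For the leftmost column, the only way rapture touches column $1$ is when $c_j=1$, in which case the virtuous entry $k$ occupies a row by itself and the rows above it are shifted down by one to close the gap. Here I would invoke the virtuousness hypotheses---that $k$ exceeds everything below it in column $1$ and is the lowest end-of-row cell of its column---together with the fact that column $1$ of $T$ was already strictly increasing, to conclude that the leftmost column of $T'$ is still strictly increasing after the shift.

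The substance lies in a monotonicity observation and in the triple rule. Each time Case~3 fires, the carried value strictly decreases: Case~3 evicts an entry $\bar{T}(c_i,d_i)<k_0$ and resets the carried value to this smaller entry. Hence the carried values form a strictly decreasing sequence and the entries deposited along the escape route strictly increase in Young reading order---the exact reversal of the bumping-path monotonicity noted in Example~\ref{insertExample}. With this in hand I would verify the triple rule by contradiction, analyzing a putative offending triple $\{a,b,c\}$ (in the notation of Figure~\ref{YCTTripRuleFig}) with $b\le a$ but $c\ge a$. Because only escape-route entries change and they increase along the route, any such configuration can be compared against the corresponding triple in $T$, which satisfied the rule, and the eligibility constraint $\bar{T}(c_i-1,d_i)\le k_0\le \bar{T}(c_i+1,d_i)$ together with the monotonicity of the carried values forces the strict inequality $c<a$, contradicting the assumption.

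I expect the triple rule to be the main obstacle, for the same reason it was delicate in Proposition~\ref{recTabTripRuleProp}: it is a non-local condition linking two adjacent rows, so an eviction in one row can interact with a fixed entry two cells away in the next. The argument I anticipate closely parallels the proof of Proposition~\ref{recTabTripRuleProp}: split into the case where the entry creating the violation was evicted from the same column and the case where it arrived from the column to the right, and in each derive an impossible chain of inequalities of the form $e>f>d$ from the triple rule for $T$ and the strict decrease of the carried values. The subcase $c_j=1$, where the removal permutes whole rows rather than bumping within a row, I would treat separately, noting that a downward shift of intact rows cannot manufacture a triple-rule violation because it preserves the relative vertical order of any two rows that were already compatible in $T$.
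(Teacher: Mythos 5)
Your handling of the two easy conditions (rows weakly increasing, leftmost column strictly increasing) and your monotonicity observation about the carried values along the escape route are both correct and agree with the opening paragraph of the paper's proof. The problem is that the triple rule is the entire substance of this lemma, and your plan does not actually establish it. The assertion that ``the eligibility constraint $\bar{T}(c_i-1,d_i)\le k_0\le \bar{T}(c_i+1,d_i)$ together with the monotonicity of the carried values forces the strict inequality $c<a$'' is a placeholder rather than an argument: the eligibility constraint lives entirely inside one row, whereas the triple rule of Figure~\ref{YCTTripRuleFig} couples the entry $a$ to a pair $b,c$ in a \emph{different} row, and $b$ and $c$ need not lie on the escape route at all. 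Concretely there are two scenarios to exclude, and neither is touched by monotonicity alone: (i) the cell of $a$ is evicted and its new, larger occupant $d$ satisfies $b\le d\le c$, even though $a<b$ held in $T$ so the triple imposed no condition there; (ii) the cell of $c$ is evicted and its new occupant $d$ satisfies $d\ge a$, even though $c<a$ held in $T$.

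What closes these cases in the paper is an argument you do not have: one shows that the evicting element $d$, being large enough to create the violation, would necessarily have evicted $b$ (resp.\ $a$) had the rapture scanned that cell while carrying $d$; hence $d$ must have entered the carried sequence strictly between the relevant cells in reading order; one then checks that any element in that region fails to be virtuous (it cannot legally terminate a row there without itself violating the triple rule of $T$), so it must in turn have been evicted by another element with the same property, and this regress can only terminate at the starting cell of the rapture --- which \emph{is} virtuous by hypothesis, a contradiction. This essential use of virtuousness of the initial cell, together with the fact that at most one entry per row is evicted, is the heart of the proof and is absent from your plan. Note also that your proposed case split (``evicted from the same column'' versus ``arrived from the column to the right'') is imported from the proof of Proposition~\ref{recTabTripRuleProp}, where it tracks an insertion bumping path; the case structure actually needed here is governed by \emph{which cell of the offending triple changed}, and within the case where $c$ changes there are further genuinely distinct subcases (e.g.\ the evicting element equal to $a$ versus strictly larger, $a$ at the end of its row or not) that your outline does not anticipate.
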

\begin{proof}
Suppose on the contrary that $T'$ is not a Young composition tableau.  It is clear that rapturing any element at the end of a row does not change that the rows increase and that the leftmost column is decreasing from top to bottom.  Thus, it must be the case that $T'$ is not a Young composition tableau because it does not satisfy the Young composition triple rule.  Since $T$ is a Young composition tableau in every subarray as below with $a\geq b$ we have $a>c$.

\begin{center}
$\tableau{b&c}$

\vspace{12 pt}

$\hspace{12 pt} \tableau{a}$
\end{center}

Since $T'$ does not satisfy the Young composition triple rule one of the previous subarrays must have changed so that in $T'$, $a\geq b$ and $a\leq c$.  There are two possible ways for this to happen.   One possibility is that in $T$, $a<b$, but in $T'$, $a$ is evicted by some element, say $d$, and $d\geq b$, but  $d\leq c$.  We note that in this case only $a$ can be evicted.  This is because if $b$ is also evicted the triple rule would trivially be satisfied, since $b$ precedes  $a$ in reverse reading order and the elements in the escape route strictly decrease.  The same reasoning implies that $c$ cannot be evicted either.  The other possibility is that in $T$, $a\geq b$ and $a>c$, but $c$ is evicted by an element say $d$ and $a<d$ (note $c$ cannot start the rapture as $c$ is not virtuous in $T$).  Again, in this situation $a$ cannot be evicted as then the triple rule would be satisfied in $T'$ and $b$ cannot be evicted because only one entry from each row can be evicted during a given rapture.  Note that these are the only two possibilities; if $b$ is evicted $b$ is replaced by something larger, say $d$.  Therefore if the triple rule was satisfied in $T$, and $b$ is evicted, $a',d,$ and $c'$ still satisfy the triple rule, where $a'$ and $c'$ are the entries in the cells formerly occupied by $a$ and $c$ respectively. We now consider the two cases.

{\bf Case 1.} Suppose that the element $a$ is evicted by $d$ and $b$ and $c$ are not evicted.   Moreover,  suppose $d\geq b$, but $d\leq c$.  Since $d$ evicts $a$, we have $d>a\geq b$.  Thus $d>b$ and so if $d$ passed by the cell containing $b$ it would evict it.  Since $b$ is never evicted, it must be that $d$ is either in the column containing $b$ and above $b$ or in the column containing $a$ and below $a$.   First, note that it cannot be in the column containing $a$ as $d>b$, but $d<c$ which would violate the triple rule.  Thus, it must be in the column containing $b$ and above $b$.  We now have to break into two cases depending on if $c=\infty$ or if $c<\infty$.

{\bf Case 1a.} Suppose that $c=\infty$.  If $d$ is at the end of the row it is in, then $d$ is not virtuous and so the rapture cannot start with the $d$. In fact, nothing in this column above $b$ is virtuous.  It follows that something must evict $d$.  Any element which evicts $d$ would also evict $b$ which is impossible.  This implies that  the rapture must have started between $b$ and $d$.  However, no element above $b$ is virtuous and so the rapture cannot start there.

{\bf Case 1b.} Suppose that $c<\infty$.  In this case, $d$ is not at the end of its row since if it was the triple rule would be violated with the $c$ and $d$.  It follows that $d$ is not virtious and so $d$ must be evicted.  Let $e$ be the element immediately to the right of $d$.  Then $c\geq d$ and $c>e$.  Suppose $f$ is the element which evicts $d$.  Then $d<f\leq e$.  It follows that $b<f<c$.  Thus, $f$ cannot pass by $b$ as it would evict it.   Therefore $f$ is between $b$ and $d$ in reverse reading order. Since $c>f$, it cannot be that $f$ is the last element in its row as the triple rule would be violated by $c$ and $f$.  Thus, $f$ is not virtuous and must be evicted.  Apply the same reasoning to see that the rapture must start between $b$ and $d$ and reading order.  However, this cannot happen as the element which starts the rapture would need to be smaller than $c$ and so could not be at the end of its row.

We conclude that Case 1 cannot  occur.  We now consider Case 2.

{\bf Case 2.}  Suppose that $c$ is evicted by $d$ and $a$ and $b$ are not evicted. Moreover, suppose that $a\geq b$, but that $a\leq d$.   We break into  two cases depending on if $d=a$ or if $d>a$.

{\bf Case 2a.} Suppose that $d=a$ and this $a$ is in the  $j^{th}$ column and $k^{th}$ row of $T$.  We now break into subcases depending on if there is an element in the $k^{th}$ row of $T$ which is strictly less than $a$ or if no such element exists.

{\bf Subcase 2a(I).}  Suppose that there is an element in the $k^{th}$ row of $T$ which is strictly less than $a$.  Let $(i,k)$ have the property that $i$ is the smallest number such that $T(i,k)=a$ and $T(i-1,k)=a_0<a$.  In a Young composition tableau, the triple rule forces all the elements in a column to be distinct.    Moreover,  if an element appears multiple times in a row, then only the rightmost instance of this element  can be evicted.  Thus, if $d=a$ is to evict $c$, it must be the case that $d$ is in a column to the left of the $i^{th}$ column.  Since $d=a>a_0$, if $d$ passed by the cell $(i-1, k)$ it would evict $a_0$.   As this does not happen, $d$ must be in a position above $a_0$ in column $i-1$.    Thus, we have a situation depicted in the subarray  that follows.

\begin{center}
$\tableau{d} \hspace{60 pt}$

\vspace{12 pt}

$ \tableau{a_0& a & a &a & \cdot \cdot &a }$
\end{center}

Since $a\geq d$, the triple rule forces the cell immediately right of $d$ to be nonempty.  Let this entry to the right of $d$ be $d'$.  The triple rule implies that $a>d'$.   However, $d\leq d'$ since rows increase and so $a=d\leq d'<a$ which is impossible.

{\bf Subcase 2a(II).}  Suppose that every element in the $k^{th}$ row of $T$ is greater than or equal to $a$. Since elements in the  columns of a Young composition tableau are distinct, this means that one of the $a$'s in this row must be evicted. However, as mentioned earlier this would have to be the rightmost such $a$.  This would force the $a$  in the column with $c$ to be evicted, but this cannot happen.

{\bf Case 2b.} Now suppose that $d>a$.  We break into two subcases depending on if $a$ is at the end its row or not.

{\bf Subcase 2b(I).} Suppose that $a$ is at the end of its row in $T$.  Then since $d>a$, if $d$ scanned $a$ then $d$ would evict $a$.  Thus, $d$ must appear between $a$ and $c$ in reverse reading order.  Since $a$ is at the end of its row, $d$ is not virtuous and so must be evicted.  Any element which evicts $d$ would be larger than $a$ and so must be between $a$ and $c$ in reverse reading order for otherwise this element would evict $a$.  Continue this reasoning to see that the rapture must start between $a$ and $d$.  However, this is impossible since no element above $a$ in this column is virtuous.

{\bf Subcase 2b(II).} Suppose that $a$ is not at the end of its row.  Let $x$ be the element immediately right of $a$ in $T$.  Then since $x\geq a>c$, the triple rule implies that  $c$ cannot be at the end of its row.  Let $y$ be the element which is immediately right of $c$ in $T$.  By the triple rule, $x>y$.  Now $d$ is to evict $c$ and so $a<d \le y<x$.  It follows that $a<d<x$ and so if $d$ passed by $a$ it would evict $a$.  Since this does not happen, $d$ must appear between $a$ and $c$ in reverse reading order.  The fact that $x>d$ implies that $d$ is not at the end of its row as this would force a triple rule violation.  Let $e$ be the element immediately right of $d$.  Note that by the triple rule, $x>e$.  Since $d$ is not virtuous, it must be evicted.  If $f$ evicts $d$, then $a<d<f \le e<x$.  Thus, $f$ would evict $a$ if it passed by it.  Therefore $f$ is between $a$ and $c$ in reverse reading order.  Apply the same reasoning as before to see that $f$ must be evicted.  Keep applying this reasoning to see that the rapture must start between $a$ and $d$, but that this element which would start the rapture is not virtuous, a contradiction.
\end{proof}

\begin{theorem}{\label{lem:UninsertInverse}}
When the rapture of $k$ from a Young composition tableau $T$ produces the output $\mTp$, where $m$ is an integer $m < \infty$, the insertion of $m$ into $T'$ produces the Young composition tableau $T$.  Similarly, if an integer $r$ is inserted into a Young composition tableau $S$ to produce a Young composition tableau $S'$, then rapturing the entry in the new cell in $S'$ produces the pair $(S,r)$.  That is, rapture is the inverse of insertion.
\end{theorem}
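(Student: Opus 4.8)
The plan is to prove both assertions at once by establishing that the escape route of a rapture is the reverse of the bumping path of the corresponding insertion, and that at each step the eviction in rapture exactly undoes one bump of insertion. Fix notation on the insertion side. Suppose $r \rightarrow S = S'$ visits the bumping path cells $p_1, p_2, \ldots, p_t$ in Young reading order, let $k^{(0)}=r$, and let $k^{(s)}$ be the value carried after the $s$-th bump, so that $k^{(s)}=S(p_s)$ is the entry bumped out of $p_s$ for $1\le s\le t-1$ and the final carried value $k^{(t-1)}$ is deposited in the terminal cell $p_t$. By the remark following Example~\ref{insertExample}, these carried values strictly increase: $r=k^{(0)}<k^{(1)}<\cdots<k^{(t-1)}$. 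Thus after insertion one has $S'(p_1)=r$ and $S'(p_s)=k^{(s-1)}$ for $s\ge 2$, while in $S$ the cell $p_t$ was empty and $S(p_s)=k^{(s)}$ for $s<t$. Write $v:=S'(p_t)=k^{(t-1)}$ for the entry in the new cell.

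The first step is to check that $v$ is virtuous in $S'$, so that the rapture $v\leftarrow S'$ is defined. The cell $p_t$ is at the end of its row, since it is either a cell that held $\infty$ in the augmentation and was filled in Case~1 of insertion, or the lone cell of a newly created leftmost-column row; this gives condition~(2) of virtuousness. Because the path values strictly increase and $v=k^{(t-1)}$ is the largest of them, $v$ exceeds every entry below it in its column and any other end-of-row entry of that column lies in a higher row, giving conditions~(1) and~(3).

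The heart of the argument is to run $v\leftarrow S'$ and show it traverses $p_t,p_{t-1},\ldots,p_1$ with strictly decreasing carried values $v=\kappa^{(0)}>\kappa^{(1)}>\cdots>\kappa^{(t-1)}=r$, where $\kappa^{(s)}=k^{(t-1-s)}$. After $p_t$ is removed and $\kappa^{(0)}=v$ is carried, I claim the rapture selection rule---take the largest index $i$ preceding the current cell with $\bar{S'}(c_i-1,d_i)\le \kappa \le \bar{S'}(c_i+1,d_i)$, then evict in Case~3 exactly when the entry there is strictly smaller than $\kappa$---lands on $p_{t-1}$, evicts $S'(p_{t-1})=k^{(t-2)}<v$, restores the original entry $S(p_{t-1})=k^{(t-1)}=v$, and carries $\kappa^{(1)}=k^{(t-2)}$. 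That $p_{t-1}$ satisfies the two-sided inequality is clean: since a bumping path meets each row at most once, the left and right neighbors of $p_{t-1}$ are unchanged from $S$ to $S'$, so the left neighbor is $\le k^{(t-2)}<\kappa$ while the right neighbor is $\ge S(p_{t-1})=v=\kappa$. Iterating, the $s$-th eviction occurs at $p_{t-s}$, restores $S(p_{t-s})$, and carries $k^{(t-s-1)}$; once the carried value reaches $\kappa^{(t-1)}=r$ no further eviction is possible and the procedure outputs $m=r$ with $T'=S$. The main obstacle is verifying this claim in full: beyond the neighbor inequalities above, one must show that every cell lying strictly between $p_{t-s}$ and $p_{t-s+1}$ in reading order that happens to satisfy the two-sided inequality has entry $\ge \kappa$, so that it is \emph{skipped} under Case~2 rather than wrongly evicted. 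This Case~2 skipping is precisely the mechanism that lets rapture step over non-path cells, and its verification rests on the strict monotonicity of the path values together with the local neighbor relations forced by the insertion inequalities at each bump; the special handling when the path meets the leftmost column is treated analogously, exactly as in the proof of the first lemma.

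Finally, the two stated directions follow. The computation above is precisely the second assertion, that inserting $r$ into $S$ and then rapturing the new entry returns $(S,r)$. For the first assertion, suppose $k\leftarrow T=\mTp$ with $m<\infty$; by the preceding lemma $T'$ is a Young composition tableau, and the escape route is a strictly decreasing sequence of cells. Running $m\rightarrow T'$ retraces this sequence in reverse, and the same matching of inequalities shows that each insertion step bumps back exactly the entry that the corresponding rapture step evicted, so $m\rightarrow T'=T$. As both procedures are deterministic, the two statements together show that rapture and insertion are mutually inverse.
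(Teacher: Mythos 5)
Your overall strategy---show that the escape route of the rapture is the reversed bumping path of the insertion, with the carried values running through the same strictly monotone sequence---is exactly the paper's strategy, but your write-up leaves the two hardest steps unproven, and one of them is justified by an argument that does not work. First, the virtuousness check: you assert that conditions (1) and (3) hold ``because the path values strictly increase and $v=k^{(t-1)}$ is the largest of them.'' The entries below the new cell $p_t$ in its column are not bumping-path values, so monotonicity along the path says nothing about them. Condition (1) actually requires the Young composition triple rule: if some entry $i=T(c_g,h)$ with $h<d_g$ satisfied $i>k$, then $T'(c_g-1,d_g)\le k<i=T'(c_g,h)$ together with $\bar{T'}(c_g,d_g)=\infty$ would violate the triple rule in $T'$. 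Condition (3) is harder still; the paper needs a full paragraph tracing where $k$ could have been bumped from (lower row of column $c_g+1$ versus higher row of column $c_g$) and deriving a contradiction from the triple rule in each case. Neither of these follows from the increasing sequence of carried values.

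Second, and more seriously, you explicitly name ``the main obstacle''---that every non-path cell satisfying the two-sided rapture inequality between consecutive path cells must be skipped under Case~2 rather than evicted---and then do not verify it, saying only that it ``rests on'' monotonicity and local neighbor relations. This is the actual content of the theorem: the insertion test ($\bar{T}(c_i-1,d_i)\le k_0<\bar{T}(c_i,d_i)$) and the rapture test ($\bar{T}(c_i-1,d_i)\le k_0\le\bar{T}(c_i+1,d_i)$, plus the strict comparison with the cell's own entry to decide eviction) are different conditions, and aligning them is not automatic. The paper resolves this with a first-difference contradiction argument run in both directions (first cell in the escape route but not the bumping path, then first cell in the bumping path but not the escape route), using the agreement of the two paths up to that point to pin down the entry scanning the disputed cell and derive contradictory inequalities. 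The same gap recurs in your treatment of the rapture-then-insert direction, which you dispatch with ``the same matching of inequalities shows\ldots'' without exhibiting it. As written, the proposal is a correct plan with the decisive verifications missing.
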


\begin{proof}
We must prove that rapture reverses insertion, and that insertion reverses rapture.  That is, we first show that if an integer $m$ inserted into a Young composition tableau $T'$ produces a Young composition tableau $T$, then rapturing the entry in the newly created cell in $T$ results in the output $\mTp$.   Then we show that if $\mTp$ is the output when an entry $k$ is raptured from $T$, then $T$ is the Young composition tableau obtained by inserting $m$ into $T'$.

First consider the insertion
$m \rightarrow T'$ with bumping path $(c_1,d_1), \hdots, (c_\geef,d_\geef).$
We show the entry $k=T(c_\geef,d_\geef)$ is virtuous and the rapture $k \leftarrow T$ produces $\mTp$ with
escape route $(c_\geef,d_\geef), \hdots, (c_1,d_1).$
Suppose $k$ fails to satisfy the first virtuous condition.  Then $k$ cannot be in the leftmost column since the leftmost column increases bottom to top.
If there exists an entry $i=T(c_\geef,h)$ with $h<d_\geef$
such that $i > k$, then $i$ was not impacted by the insertion $m \rightarrow T'$ since the location of $i$ is after $(c_g,d_g)$ in reading order.  Now, $T'(c_\geef-1,d_\geef) \le T(c_\geef,d_\geef)=k<i=T'(c_\geef,h)$
and $\bar{T'}(c_\geef,d_\geef)=\infty.$
But then the entries in cells $(c_\geef-1,d_\geef), (c_\geef,h)$, and $(c_\geef,d_\geef)$ force $T'$ to violate the triple rule.  Therefore $k$ satisfies the first virtuous condition.

The entry $k$ at which insertion terminates must be at the end of its row, so the second virtuous condition is satisfied by construction.  To see that the third virtuous condition is satisfied, we must check that there are no rows below the row containing $k$ of the same length as the row containing $k$.  If there were such a row, say row $h$, then the rightmost entry $e=T'(c_g,h)$ in this row must be less than $k$ by the first virtuous condition (which is satisfied by the above argument).  Since the insertion did not terminate at the cell $(c_g+1,h)$, the entry which scanned cell $(c_g,h)$ during the insertion of $m$ must have been strictly less than $e$, and hence strictly less than $k$.  Therefore $k$ must have been bumped from a position after $(c_g+1,h)$ in reading order.  But if $k$ were bumped from a lower row, say row $i$, of column $c_g+1$, then $T'(c_g+1,h)=k, T'(c_g,h)=e,$ and $\bar{T}'(c_g+1,h)=\infty$ would violate the triple condition in $T'$ since $e<k$ and $k < \infty$.  Therefore $k$ must have been bumped from column $c_g$ but a higher row, row $r$, than $d_g$ of $T'$.  Since the entry $a=T'(c_g-1,r)$ immediately to the left of $k$ in $T'$, together with $k$ and $T'(c_g,h)$ satisfy the triple rule in $T'$, we have $a \ge e$ and by the above argument the entry bumping $k$ must have been bumped from column $c_g$.  (Note that entries in the leftmost column cannot be bumped or evicted and therefore the entry $T'(c_g-1,r)$ does in fact exist.)  Repeating this argument shows that each entry bumped from column $c_g$ must have been larger than $e$, but we also saw that the entry scanning $T'(c_g+1,h)$ was less than $e$, and every entry bumped from a lower row of column $c_g+1$ must be less than $e$.  These three statements together are not possible since this implies that the entry which moves from column $c_g+1$ to column $c_g$ must be both less than $e$ and greater than $e$ at the same time, a contradiction.  Therefore $k$ must be virtuous.

Let $(c_1',d_1'), (c_2',d_2'), \hdots , (c_h', d_h')$ be the escape route for the rapture of $k$.  If this escape route is identical to the bumping path for the insertion of $m$ (that is, $(c_1',d_1')=(c_{g},d_{g}), (c_2',d_2')=(c_{g-1},d_{g-1}), \dots ,(c_h',d_h')=(c_1,d_1)$) then the result of the rapture of $k$ is the pair $(T',m)$, as desired.  Therefore we may assume that escape route and the bumping path differ at some cell.  Consider the first cell in reverse reading order (starting from $(c_g,d_g)$) at which the escape route and the bumping path differ.  Either this cell is in the escape route and not the bumping path or this cell is in the bumping path and not the escape route.

Assume first that $(c_i',d_i')$ is the first cell in reverse reading order in the escape route but not the bumping path.  This means that during the insertion of $m$ into $T'$, the entry scanning this cell did not bump it, but went on to bump the next entry on the escape route.  We know that the entry $e$ contained in $T(c_{i-1}',d_{i-1}')$ after insertion is the entry scanning $T(c_i',d_i')$ during the rapture of $k$, since the escape route agrees with the bumping path up to this point (in reverse reading order).  Furthermore, the bumping path does not include any cells between $(c_i',d_i')$ and $(c_{i-1}',d_{i-1}')$ so in fact $e$ is the entry which scanned the cell $(c_i',d_i')$ during insertion.  Since $e$ evicts $T(c_i',d_i')$ during rapture,  we have $T(c_i'-1,d_i') \le  e$ and $e > T(c_i',d_i')$.  Since the entry in $T(c_i'-1,d_i')$ after eviction is the same as the entry in $T'(c_i'-1,d_i')$ (by equality of bumping path and escape route up to $(c_i',d_i')$), we have $e \ge T'(c_i'-1,d_i')$.  Since $e$ does not bump $T'(c_i',d_i')$, we must have $e  \ge T'(c_i',d_i')$, a contradiction.  Therefore if the escape route and the bumping path differ, their first difference (in reverse reading order) involves a cell which is in the bumping path but not the escape route.

Assume next that $(c_i,d_i)$ is the first cell in reverse reading order in the bumping path but not in the escape route.  Then $e=T'(c_i,d_i)$ is the entry bumped from this cell and hence $T'(c_i-1,d_i) \le e \le T'(c_i+1,d_i)$ and the entry in $(c_i,d_i)$ during rapture is smaller than $e$.  Since the bumping path and escape route agree after that point, $e$ is the entry scanning cell $(c_i,d_i)$ during rapture.  But then $T'(c_i-1,d_i) \le e \le T'(c_i+1,d_i)$ and $e$ is larger than the entry in cell $(c_i,d_i)$ during rapture.  So $e$ should evict this entry and we have a contradiction.

We have shown that there cannot be a first position at which the bumping path for $m \rightarrow T'$ differs from the escape route for $k \leftarrow T$, and hence the bumping path and escape route must be identical.  Therefore rapturing $k$ from $T$ produces the pair $(T',m)$ as desired.

To go the other way, assume that $k$ is raptured from a Young composition tableau $T$ to produce $(T',m)$, and then $m$ is inserted back into $T'$.  We will prove that the result is $T$ by showing the escape route for the rapture of $k$ is identical to the bumping path for the insertion of $m$.  This proof is similar to the above argument so we use broader strokes here to provide intuition supplementing the precision in the argument above.

If the escape route and bumping path are identical then the proof is complete.  Therefore we may assume that they differ at some cell.  Either this cell is in the escape route and not the bumping path or this cell is in the bumping path and not the escape route.

Assume first that $(c_i',d_i')$ is the first cell in reading order that is in the escape route but not the bumping path.  The next entry it evicts is in the bumping path, as are all the remaining entries evicted as the rapture comes to a conclusion.  Therefore the entry $e$ to scan $(c_i',d_i')$ of $T'$ during insertion is the entry that was evicted from $(c_i',d_i')$ of $T$ during rapture.  This entry $e$ scanning cell $(c_i',d_i')$ during insertion must be smaller than $T'(c_i',d_i')$ and was situated in this cell before rapture; hence it satisfies all conditions necessary to bump this entry, a contradiction.

Assume next that $(c_i,d_i)$ is the first cell in reading order in the bumping path but not the escape route.  Since the bumping path and escape route agree up to this point, this means that we are bumping an entry which was not evicted, hence the entry $e$ doing this bumping passed by this cell without evicting $T(c_i,d_i)$ during rapture.  But since $e$ bumps the entry in $T'(c_i,d_i)$, we must have $e$ greater than or equal to $T(c_i-1,d_i)$.  But then $e$ would have evicted the entry in cell $(c_i,d_i)$ of $T$ since $e$ is greater than this entry, a contradiction.

We have shown that there cannot be a first position at which the bumping path for $m \rightarrow T'$ differs from the escape route for $k \leftarrow T$, and hence the bumping path and escape route must be identical.  Therefore inserting $m$ into $T'$ produces $T$, as desired.

We have shown that when we insert and then rapture we get the same bumping and escape route sequence.  We have also shown that if we rapture and then insert we get the same bumping path and escape route.  It follows that insertion and rapture are inverses of each other.
\end{proof}

We now define a procedure that uses rapture to determine the immaculate tableau associated with a pair $(T,Q)$ where $T$ is a Young composition tableau and $Q$ is a DIRT of the same shape.

\begin{procedure}{\label{proc:uninsert}}
Given a pair $(T,Q)$ with $T$ a Young composition tableau and $Q$ a DIRT both of shape $\beta$.  Set $w:=\emptyset$, the empty word.
\begin{enumerate}
\item Suppose that the largest entry in $Q$ occurs at position $(i,j)$.  Set $z=T(i,j)$, erase the entry in $(i,j)$ from $Q$ to obtain the DIRT $Q'$, and rapture $z$ from $T$ to obtain the pair $\mTp$.
\item Prepend $m$ to $w$, replace $T$ by $T'$, replace $Q$ by $Q'$, and repeat step 1.
\item Once all entries have been removed from $Q$ (and thus from $T$), the procedure is complete and the output is the word $w$.
\end{enumerate}
\end{procedure}

In the following lemma, we will prove that if  $Q$ is  a DIRT, then rapture will always produce a word.  We must therefore prove that is a well-defined procedure; i.e., that all entries in $T$ identified as candidates for rapture are in fact virtuous.  We also prove that the word contains no infinite letters.   Moreover, the resulting word has several additional important properties.  In particular, we have the following.

\begin{lemma}{\label{lem:UninsertWord}}
Suppose $T$ is a Young composition tableau and $Q$ is a DIRT of the same shape as $T$ with row strip shape $\alpha$.  If we apply Procedure \ref{proc:uninsert} to $(T,Q)$ we get the reading word of an immaculate tableau of shape $\alpha^{rev}$.
\end{lemma}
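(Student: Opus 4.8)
The plan is to run Procedure~\ref{proc:uninsert} one step at a time, maintaining throughout the invariant that the current pair $(T,Q)$ is a Young composition tableau together with a DIRT of the \emph{same} shape. I would prove this by downward induction on the entries of $Q$. At each stage the entry to be raptured sits at the cell $(i,j)$ holding the current maximum of $Q$; I must show that this entry is \emph{virtuous} (so the lemma guaranteeing that rapturing a virtuous entry yields a Young composition tableau applies), that the rapture terminates with a finite output, and that erasing the maximum from $Q$ again yields a DIRT. Since rapture begins at and deletes the cell $(i,j)$, and that same cell is erased from $Q$, the shapes of $T'$ and $Q'$ stay equal; and deleting the largest, end-of-row entry clearly preserves the four axioms of Definition~\ref{dirtDef}. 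By Theorem~\ref{lem:UninsertInverse}, reinserting the output word reconstructs $(T,Q)$, so once well-definedness is in hand it suffices to analyze how the DIRT structure of $Q$ constrains the word.

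For virtuousness of the cell $(i,j)$ of the maximum I would argue as follows. Condition~(2) (being at the end of its row) is free, since $Q$ has weakly increasing rows, so its global maximum is a row end, and $T$ has the same shape. Condition~(3) (that $(i,j)$ is the lowest row-end in column $i$) follows from the recording triple rule of $Q$: if a lower cell $(i,h)$ with $h<j$ were also a row end, then $(i+1,h)$ is empty $=\infty$, yet $Q(i,j)>Q(i,h)$ forces $Q(i,j)>Q(i+1,h)=\infty$, which is impossible. Condition~(1) ($T(i,j)$ exceeds everything below it in column $i$) then follows from the Young composition triple rule of $T$: for $h<j$, condition~(3) guarantees that $(i+1,h)$ is filled; were $T(i,j)\le T(i+1,h)$ the triple rule would give $T(i+1,j)<T(i+1,h)$, contradicting $T(i+1,j)=\infty$; hence $T(i,j)\ge T(i+1,h)>T(i,h)$.

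It then remains to identify the output word $w=m_1m_2\cdots m_n$, where $m_k$ is the value produced when the entry $k$ is removed, as the reading word of an immaculate tableau of shape $\alpha^{rev}$. First I would record the fine structure of a DIRT: its leftmost-column cells are exactly the starts of the row strips, so there are $\ell(\alpha)$ of them, and $w$ breaks into blocks $w^{(1)},\dots,w^{(\ell)}$ of lengths $\alpha_1,\dots,\alpha_\ell$, one per strip; reading these as rows from the top down gives shape $\alpha^{rev}$. Within a block the cells recorded in $Q$ occupy strictly increasing columns, and I would deduce that the corresponding outputs weakly increase (giving weakly increasing rows) from a converse of the column-monotonicity lemma preceding Proposition~\ref{prop:RowStrips}: inserting a strictly smaller value immediately after a larger one creates its new cell weakly to the left, so $m_k>m_{k+1}$ inside a strip would contradict the strictly increasing columns. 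Finally, by Theorem~\ref{lem:UninsertInverse} the block-initial outputs are precisely the values whose insertion creates a new row, equivalently the left-to-right minima of $w$, and these strictly decrease; this is exactly the leftmost-column condition for an immaculate tableau. Assembling the blocks as rows therefore produces an immaculate tableau of shape $\alpha^{rev}$ whose immaculate reading word is $w$.

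The two steps I expect to be genuinely delicate are ruling out the ``$m=\infty$'' branch (Case~1 of Procedure~\ref{proc:rapture}), i.e.\ showing that every guided rapture terminates by depositing a finite value at a row start rather than in an empty interior cell, and the converse column-monotonicity needed for the weakly increasing rows. Both are escape-route/bumping-path arguments dual to the proof of the column-monotonicity lemma, and both must make essential use of the DIRT axioms: the leftmost column increasing from top to bottom drives the finiteness, while the recording triple rule drives the monotonicity. These are precisely the hypotheses that, as the example following Figure~\ref{fig:rapture2} illustrates, fail for an arbitrary rapture sequence, which is why the DIRT structure of $Q$ is exactly what makes the procedure produce a legitimate immaculate reading word.
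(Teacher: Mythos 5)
Your treatment of well-definedness --- virtuousness of the cell holding the maximum of $Q$ via the recording and Young composition triple rules, and preservation of the DIRT axioms when the maximum is erased --- matches the paper's argument. But the two steps you flag as ``genuinely delicate'' are not side issues; they are the substantive content of the lemma beyond well-definedness, and you leave both unproven. The paper proves finiteness of each output by assuming Case~1 of Procedure~\ref{proc:rapture} occurs at a cell $(c_i,d_i)$ and then eliminating every possible starting point for the rapture: a start preceding $(c_i-1,d_i)$ in reverse reading order would have evicted the entry there (contradicting $T(c_i-1,d_i)<k_0$), a start above it in column $c_i-1$ would not be virtuous, and a start below in column $c_i$ forces a Young-triple-rule violation against the empty cell $(c_i,d_i)$. (Note the driver here is virtuousness together with the triple rule of $T$, not the monotone leftmost column of $Q$ as you suggest.) For the increase of outputs along a row strip, the paper argues directly: it compares the escape route of the earlier rapture $f_2$ with the path scanned by the later rapture $f_1$, proving by induction that each scanning value $v_{\geeg+1}$ stays strictly below the corresponding evicted value $u_\geeg$, whence $e_1<e_2$.

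Your alternative route for that second step --- reinsert $w$, use Theorem~\ref{lem:UninsertInverse} to recover $Q$ as the recording filling, and invoke a \emph{converse} of the column-monotonicity lemma --- would work if that converse were available, but it is not: the lemma preceding Proposition~\ref{prop:RowStrips} proves only that $c\le d$ forces the new cell of $d$ strictly to the right, and the statement you need (that $c>d$ forces it weakly to the left) is a separate bumping-path argument of difficulty comparable to the one you are trying to avoid. Finally, your assertion that the block-initial outputs are ``equivalently the left-to-right minima of $w$'' is not justified and is false for arbitrary words: inserting $1,3,2$ creates a new row at the insertion of $2$, which is not a left-to-right minimum. What is actually needed, and what the paper observes, is that each block-initial rapture removes a singleton bottom row containing the current minimum of $T$, so it evicts nothing and outputs that minimum; since the minima of the successively shrinking tableaux increase in removal order, the block-initial letters strictly decrease in the order they appear in $w$, which is the leftmost-column condition for the immaculate tableau.
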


\begin{proof}
We first prove that Procedure \ref{proc:uninsert} is well-defined.  To see that the first entry $z=T(i,j)$ identified for rapture from $T$ is virtuous, we must first prove that $z$ is greater than all entries $T(i,\geeg)$ such that $\geeg<j$.  Since $Q(i,j)$ is the largest entry in $Q$, we have $Q(i,\geeg) < Q(i,j)$ for all $\geeg<j$ such that $Q(i,\geeg)$ is non-empty.  For every such non-empty $Q(i,\geeg)$, the recording triple rule implies that since $Q(i,j) > Q(i,\geeg)$, we must have $Q(i,j)>Q(i+1,\geeg)$.  In particular, $Q(i+1,\geeg)$ is non-empty since empty cells are considered to contain the value infinity.  This means that $T(i+1,\geeg)$ is also non-empty.  The Young composition triple rule implies that $T(i+1,\geeg) < T(i,j)$, since $T(i+1,j)=\infty$ as this cell is empty.  But $T(i,\geeg) < T(i+1,\geeg)$, so $T(i,\geeg) < T(i,j)$.   Note also that $T(i,j)$ is the last entry in its row and it is the lowest such element since otherwise the the recording triple rule of $Q$ would not be satisfied. Therefore $T(i,j)$ is virtuous.

Next we show that removing the largest entry from a DIRT still produces a DIRT.  It is clear that the rows still increase, the row strips still start in the leftmost column,  and the leftmost column is increasing top to bottom.  We must check that the recording triple rule is still satisfied.  When the largest entry is removed from $Q$, that cell is now considered an infinity.  The triples not involving this cell remain the same, so it is enough to check only triples involving the removed cell.  Such triples are situated as below, with either $a=\infty$ or $c=\infty$.

\begin{center}
$\tableau{a}$

\vspace{15 pt}

$\hspace{15 pt} \tableau{b & c}$

\end{center}

If $a=\infty$, then $a>b$ and $a>c$, so the triple rule is satisfied.  If $c=\infty$, then we must show that $a<b$.  But if $a>b$, then $a$ would have been greater than the entry removed from the cell containing $c$ in $Q$.  So $c$ would not have been removed from $Q$ since it was not the largest entry in $Q$.  Therefore $a<b$.

Removing the largest entry from a DIRT produces a DIRT and rapturing a virtuous entry from $T$ produces a Young composition tableau.  Therefore each candidate for rapture during Procedure \ref{proc:uninsert} is virtuous by the argument in the first paragraph of this proof.

Next, we prove that each rapture produces an integer less than infinity.  Assume not.  Then at some point during the rapturing (Procedure~\ref{proc:rapture}) of the entry $z=T(i,j)$ from the YCT, we reach the situation described in Case 1 of Procedure~\ref{proc:rapture}; that is, $\bar{T}(c_i-1, d_i) < k_0$ and $\bar{T}(c_i,d_i) = \infty$.  Note that this means $k_0$ is the entry that evicts $\infty$.

If the rapture procedure began somewhere before the cell $(c_1-1, d_i)$ in reverse reading order (meaning the rapturing procedure scanned the entry in cell $(c_i-1,d_i)$), then it would evict $\bar{T}(c_i-1,d_i)$ as it must be greater than or equal to $k_0$ since the escape route forms a decreasing sequence.  But $T(c_i-1,d_i) < k_0$, so the entry in cell $(c_i-1,d_i)$ was not impacted by this rapture.  Therefore the rapture began in some cell after the cell $(c_i-1,d_i)$.  If the rapture began in column $c_i-1$ in a row $r$ above row $d_i$, then  the element which starts the rapture is not virtuous. 
This means that the rapture must have started in column $c_i$ in a row $s$ lower than row $d_i$.  However, in that case the entries $T(c_i-1,d_i), T(c_i,s),$ and the infinity immediately to the right of $(c_i-1,d_i)$ violate the Young composition tableau triple rule.  Therefore this cannot happen and thus Procedure \ref{proc:uninsert} always outputs a number less than infinity.

Finally, suppose that $\dee_1$ and $\dee_2$ are consecutive elements ($a_2=a_1+1$) of some row strip in $Q$.  Specifically, let
$\dee_2=Q(j,\geeh)$ and $\dee_1=   Q(h,\geeg)$  where 
$h<j$.

Moreover, suppose that when we rapture $f_2=T(j,\geeh)$ and $f_1=T(h,\geeg)$ (the entries in $T$ corresponding to $\dee_2$ and $\dee_1$, respectively),
then the outputs are $e_2$ and $e_1$, respectively.  We will show that $e_1<e_2$.

Suppose that the escape route for $f_2$ is $(c_1,d_1),(c_{2},d_{2}),\hdots, (c_\geeh,d_\geeh),$
and set $u_\geeg=T(c_\geeg,d_\geeg)$ for $1 \le \geeg \le \geeh.$
Let $\cee_\geeg=T(c_\geeg-1,d_\geeg)$ be the element to the left of $u_\geeg$ just before $u_\geeg$ was evicted while rapturing $f_2$.
When we rapture $f_1$, the recording triple rule implies that we pass by the positions containing the $\cee_k$'s.
Let $v_{k}$ be the element which passes by $\cee_k$ when rapturing $f_1$.

We break into two cases depending on how many elements are evicted when rapturing $f_1$.

{\bf Case 1.} There are no evictions.

In this situation, $e_1=f_1.$
During the rapture process, $f_1$ moved past $\cee_\geeh$ without evicting it;
so $e_1=f_1<\cee_\geeh<e_2$.

{\bf Case 2.} There is at least one eviction.

We prove by induction that for $\geeg=1,\dots, \geeh-1$, we have $v_{\geeg+1} < u_\geeg$. Consider what happens when $v_{1}$ passes by $\cee_1$.
If $v_{1}$ evicts $\cee_1$, then $v_{2} \le \cee_1< u_1$.
Since the position to the right of $\cee_1$ is empty after the rapture of $f_2$, if $v_{1}$ does not evict $\cee_1$  then it must be that $v_{1}<\cee_1<u_1$.  Since evicting decreases the elements, $v_{2}\leq v_{1}<\cee_1<u_1$.  In either case, $v_2 < v_1$.

Now suppose that $v_{\geeg}<u_{\geeg-1}$. Consider what happens as $v_{\geeg}$ passes by $\cee_\geeg$ which is immediately left of $u_{\geeg-1}$ after the rapture of $f_2$.  If $v_k$ evicts $p_k$, then $v_{\geeg+1} \le  \cee_\geeg<u_{\geeg}$. If $v_k$ does not evict $p_k$, then it must be the case that $v_{\geeg}<\cee_\geeg<u_\geeg$.  This is because $\cee_\geeg$ is immediately to the left of $u_{\geeg-1}$ and $v_{\geeg}<u_{\geeg-1}$.  In either case, $v_{\geeg+1}<u_\geeg$ as claimed.

One of two things happened when we passed by $\cee_\geeh$ while rapturing $f_1$: either $\cee_\geeh$ was evicted or it was not.  If $p_g$ was evicted, then $\cee_\geeh$ appears in the sequence of evictions for the rapture of $f_1$.  Since $\cee_\geeh<u_\geeh$ and the entries involved in eviction decrease, this implies that $e_1<u_\geeh=e_2$.  If $p_g$ was not evicted, it must have been the case that $v_{\geeh}<\cee_{\geeh}$ which again implies that $v_{\geeh}<u_\geeh$ and so $e_1<e_2$.

 It is not hard to see that when we removed elements from the leftmost column of $T$ which are in the last row, these elements just come out (i.e., there is no eviction).  This is because they are the smallest elements of $T$.  Combining this with the fact that removing elements in a row strip creates a decreasing sequence of elements implies the word we get out of rapture is indeed an immaculate word of shape $\alpha^{rev}$.
\end{proof}

\section{Proof of Main Theorem and Related Results}\label{mainThmSec}

\begin{figure}
\centering
\begin{tabular}{ |c | c|  c| c| }
\hline
Standard Immaculate Tableau & SYCT & DIRT & Fundamental\\
\hline
\tableau{3 & 4\\ 1 & 2} & \tableau{3 & 4\\  1 & 2} & \tableau{1 &2 \\ 3 & 4}& $F_{(2,2)}$\\[9 pt]
\hline
\tableau{2&4\\ 1&3 }  & \tableau{2&3 \\1&4} & \tableau{1 &2 \\ 3&4}& $F_{(1,2,1)}$ \\[9 pt]
\hline
\tableau{2&3\\ 1&4} & \tableau{2&3&4 \\ 1} & \tableau{1 &2&4 \\ 3}&$F_{(1,3)}$\\[9 pt]
\hline
\end{tabular}
\caption{The three standard immaculate tableau of shape $(2,2)$ with the standard Young composition tableaux, and dual immaculate recording tableaux obtained from insertion and the fundamental quasisymmetric function associated with both the standard immaculate tableau and standard Young composition tableau. \label{insExTab}}
\end{figure}

Before we prove Theorem  \ref{thm:main}, we compute a small example that shows how the insertion algorithm gives the corresponding decomposition.  We decompose $\dI_{(2,2)}$.  Figure~\ref{insExTab} contains the three standard immaculate tableaux of shape $(2,2)$, the three standard Young composition tableaux obtained from insertion, their respective dual immaculate recording tableaux, and the associated fundamental quasisymmetric functions.

The figure shows that $\dI_{(2,2)} = F_{(2,2)}+F_{(1,2,1)}+F_{(1,3)}$.  Moreover, every standard Young composition tableau of shape $(2,2)$ and $(1,3)$ appears exactly once.  Since the the insertion map preserves the descent sets, this implies that $\YQS_{(2,2)}+\YQS_{(1,3)} =  F_{(2,2)}+F_{(1,2,1)}+F_{(1,3)}$.  Therefore, $\dI_{(2,2)}=\YQS_{(2,2)}+\YQS_{(1,3)}$.  Note also that each recording tableau we obtain is a DIRT with row strip shape $(2,2)^{rev}$.  Moreover,  each DIRT of shape $\beta$ with row strip shape $(2,2)^{rev}$ appears with each standard Young composition tableau of shape $\beta$ exactly once.  Thus we just need to count the number of DIRTs of shape $\beta$ to get the coefficient of $\YQS_{\beta}$ in $\dI_{(2,2)}$.  There is one DIRT of shape $(2,2)$ and one of shape $(1,3)$, both  with row strip shape $(2,2)^{rev}$.  Therefore, we see again that $\dI_{(2,2)}= \YQS_{(2,2)} + \YQS_{(1,3)}$.

\subsection{Proof of the main theorem and results about DIRTs}

Recall that Theorem~\ref{thm:main} states that
\begin{align}{\label{eq:MainThm}}
\dI_{\alpha} = \sum_{\beta} c_{\alpha,\beta} \YQS_{\beta},
\end{align}
where $c_{\alpha,\beta}$ is the number of DIRTs of shape $\beta$ with row strip shape $\alpha^{rev}$.

\begin{proof}[Proof of Theorem 1.1]
For $\alpha\vDash n$,  let $I(\alpha)$ be the set of standard immaculate tableaux of shape $\alpha$.  Additionally, let $Y(\alpha)$ be the set of pairs $(P,Q)$ such that $P$ is a SYCT, $Q$ is a DIRT with row strip shape $\alpha^{rev}$, and $P$ and $Q$ have the same shape.

We claim that there is a bijection, $\varphi$, from $I(\alpha)$ to $Y(\alpha)$ such that if  $\varphi(U) = (P,Q)$ then $Des_{\dI}(U) = Des_{\YQS}(P)$.  Assume for now that such a bijection $\varphi$ exists.
It follows that
\begin{equation}\label{proofEq}
\sum_{U \in I(\alpha)} F_{Des_{\dI}(U)} =\sum_{(P,Q) \in Y(\alpha)} F_{Des_{\YQS}(P)}.
\end{equation}
By Proposition~\ref{fundDecomp}, we know that the right hand side of equation~\ree{proofEq} is the right hand side of equation~\ree{eq:MainThm}.  Moreover, by Proposition~\ref{fundDecompofDI}, the left hand side of equation~\ree{proofEq} is the left hand side of equation~\ree{eq:MainThm}.   It follows that if such a bijection exists, equation~\ree{eq:MainThm} holds.

To see that our desired bijection $\varphi$ exists, begin with an arbitrary composition $\alpha$ and let $U$ be a standard  immaculate tableau of shape $\alpha$.  Recall the reading word of $U$ is given by reading the rows of $U$ from left to right, beginning at the top row and working from top to bottom.  See Figure~\ref{fig:insert} for an example.  Note that the rows of $U$ appear as the longest consecutive increasing subsequences in this reading word $rw_{\dI}(U)$, since the leftmost column entries are strictly decreasing from top to bottom.  Let $\varphi$ be the map that sends this reading word to a pair $(P,Q)$ consisting of a standard Young composition tableau $P$ and a dual immaculate recording tableau $Q$ using the insertion algorithm described in Procedure~\ref{proc:insertion}.  We know $P$ is a standard Young composition tableau because $P$ was obtained using the insertion procedure, and Corollary~\ref{cor:dirt} implies that $Q$ is a dual immaculate recording tableau.

To see that $\varphi$ is a bijection, note that its inverse is given by Procedure~\ref{proc:uninsert}.  Here, we record the resulting output from the rapture procedure to form a word.  Theorem~\ref{lem:UninsertInverse} and Lemma~\ref{lem:UninsertWord} imply that the resulting word is in fact the reading word of the unique standard immaculate tableau of shape $\alpha$ which mapped to $(P,Q)$ under insertion.  Therefore the map is a bijection, as desired.
\end{proof}

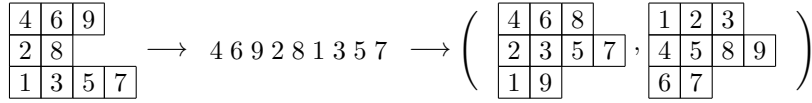
\begin{figure}
\begin{center}
\begin{tikzpicture}
\node at (0,0) {$\tableau{4 & 6 & 9 \\ 2 & 8 \\ 1 & 3 & 5 & 7}$};
\node at (1.25,0) {$\longrightarrow$};
\node at (3,0) {$4 \; 6 \; 9 \; 2 \; 8 \; 1 \; 3 \; 5 \; 7$};
\node at (4.75,0) {$\longrightarrow$};
\node at (5.25,0) {$\Bigg ($};
\node at (6.5, 0) {$\tableau{4 & 6 & 8 \\ 2 & 3 & 5 & 7 \\ 1 & 9}$};
\node at (7.5,0) {$,$};
\node at (8.5,0) {$\tableau{1 & 2 & 3 \\ 4 & 5 & 8 & 9 \\ 6 & 7}$};
\node at (9.75,0) {$\Bigg )$};
\end{tikzpicture}
\caption{The map from a standard immaculate tableau to a SYCT and the corresponding DIRT.}  \label{fig:insert}
\end{center}
\end{figure}

Let us now consider some consequences of Theorem~\ref{thm:main}.   For a composition $\alpha=(\alpha_1,\alpha_2,\dots, \alpha_\ell)$ define $j_m$ to be $\sum_{i=1}^{m-1} \alpha_{\ell-i-1}$.  Now let $\lambda$ be a partition. Let $Q$ be the filling of a diagram with shape $\lambda$ obtained from filling the $(\ell-m+1)^{th}$ row from left to right with integers in $[j_m+1, j_{m+1}]$.  It is not hard to see that $Q$ is  the unique  DIRT of shape $\lambda$ and row strip shape $\lambda^{rev}$. We call such a tableau \emph{superstandard}.

As an example if $\lambda=(3,2,1)$, then
$$
 \tableau{1\\2&3\\4&5&6}
$$
is the superstandard DIRT of shape $\lambda$ and  row strip shape $\lambda^{rev}$.

We will use the superstandard DIRTs to prove the following proposition.

\begin{proposition}\label{prop:rearrangeDIRT}
Let $\lambda$ be a partition.  If $\alpha$ is a composition such that $\alpha$ rearranges to $\lambda$, then there exists a DIRT of shape $\alpha$ and row strip shape $\lambda^{rev}$.
\end{proposition}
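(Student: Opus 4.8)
The plan is to construct an explicit DIRT of shape $\alpha$ and row strip shape $\lambda^{rev}$ by filling column by column, thereby generalizing the superstandard construction (which is exactly the case $\alpha=\lambda$). Write $\lambda^{rev}=(\mu_1,\dots,\mu_\ell)$ with $\mu_k=\lambda_{\ell-k+1}$, so that the intended row strips are the blocks of consecutive integers $R_k=\{r_k,r_k+1,\dots,r_k+\mu_k-1\}$, where $r_k=1+\mu_1+\cdots+\mu_{k-1}$. I would place the $c$-th element $r_k+c-1$ of each row strip $R_k$ with $\mu_k\ge c$ into column $c$. Within column $c$, the cells of $D_\alpha$ are those in rows $\{i:\alpha_i\ge c\}$; I would assign the chosen values to these cells by listing the values in increasing order and filling the available rows from top to bottom.

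First I would check that the construction is well defined: column $c$ receives $\#\{k:\mu_k\ge c\}=\#\{\text{parts of }\lambda\ge c\}$ values, while the number of cells of $D_\alpha$ in column $c$ is $\#\{i:\alpha_i\ge c\}=\#\{\text{parts of }\alpha\ge c\}$, and these agree precisely because $\alpha$ rearranges $\lambda$. Three of the four axioms of Definition~\ref{dirtDef} are then essentially immediate. Column $1$ consists of the row strip starts $r_1<r_2<\cdots<r_\ell$ placed top to bottom, so the row strips begin in the leftmost column and that column increases downward, giving axioms (2) and (3). Since the $c$-th element of $R_k$ occupies column $c$, each $R_k$ lies in the distinct columns $1,2,\dots,\mu_k$ and the next integer $r_{k+1}$ returns to column $1$; hence the maximal consecutive runs in distinct columns are exactly the $R_k$, and the row strip shape is $(\mu_1,\dots,\mu_\ell)=\lambda^{rev}$. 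Finally, because every column is filled increasingly from top to bottom, for any column $i$ and rows $j>g$ we have $Q(i,j)<Q(i,g)$, so the hypothesis of the recording triple rule is never met and axiom (4) holds vacuously.

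The one substantive step is the row-increasing axiom (1): that the entry in cell $(c,i)$ is smaller than the entry in cell $(c+1,i)$ whenever both belong to $D_\alpha$. I would establish this by a rank computation. Let $k_0(c)$ denote the index of the first row strip of length at least $c$, and suppose row $i$ is the $q$-th from the top among rows $i'$ with $\alpha_{i'}\ge c$ and the $p$-th from the top among rows with $\alpha_{i'}\ge c+1$. By the top-to-bottom filling, the entry of cell $(c,i)$ is $r_{k_0(c)+q-1}+(c-1)$ and that of cell $(c+1,i)$ is $r_{k_0(c+1)+p-1}+c$. Since the sequence $r_1<r_2<\cdots$ is strictly increasing, the desired inequality reduces to $q-p\le k_0(c+1)-k_0(c)$. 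Now $q-p$ counts the rows strictly above $i$ of length exactly $c$, whereas $k_0(c+1)-k_0(c)$ counts all row strips of length exactly $c$, that is, all parts of $\lambda$ — equivalently of $\alpha$ — equal to $c$; the former is at most the latter, which yields the inequality. I expect this rank bookkeeping to be the crux of the proof, and in particular the observation that the equality of part-multiplicities of $\alpha$ and $\lambda$ is exactly what forces the rows to increase; the remaining verifications are formal.
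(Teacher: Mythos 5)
Your proof is correct, and in fact the tableau you build is exactly the one the paper constructs: the paper also starts from the superstandard DIRT of shape $\lambda$, but it gets to shape $\alpha$ by defining an action of $S_\ell$ on fillings in which an adjacent transposition moves the suffix of the longer of two adjacent rows onto the shorter one, observes that this action preserves the multiset of entries in each column together with their top-to-bottom order, and then transports the DIRT axioms along the action (the only nontrivial point being row-increase, argued by counting how many entries lie below a given entry in its column). Your route replaces the group action by a closed-form, column-by-column description of the entries, which makes the verification of axioms (2)--(4) and of the row strip shape essentially immediate, and isolates the real content in the rank inequality $q-p\le k_0(c+1)-k_0(c)$; your identification of $q-p$ with the number of rows of length exactly $c$ strictly above row $i$, and of $k_0(c+1)-k_0(c)$ with the multiplicity of the part $c$ in $\lambda$ (equivalently in $\alpha$), is correct and is precisely where the hypothesis that $\alpha$ rearranges $\lambda$ enters. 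The trade-off is that the paper's action argument is more structural and reusable (it shows \emph{any} permutation of the rows of a column-increasing DIRT of partition shape yields a DIRT with the same row strips), whereas your explicit formula is more self-contained and makes the dependence on part multiplicities transparent. One small point worth spelling out if you write this up: maximality of each block $R_k$ on the left uses $\mu_{k-1}\le\mu_k$, so that $r_k-1$ sits in column $\mu_{k-1}$, which is also occupied by an element of $R_k$; this is where the weakly increasing order of $\lambda^{rev}$ is used.
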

\begin{proof}
We first describe an action of a transposition of the form $(i, i+1)$ on a filling of a composition diagram $F$.  Let $F$ have shape $(\alpha_1,\alpha_2, \dots, \alpha_\ell)$.  If $\alpha_i <\alpha_{i+1}$, then when $(i, i+1)$ acts on $F$  we append the suffix of the $(i+1)^{th}$ row of $F$ which starts with the element in position $(\alpha_{i}+1,i+1)$ to the end of row $i$ of $F$. If $\alpha_i >\alpha_{i+1}$,  then we append the suffix of the $i^{th}$ row of $F$ which starts with the element in position $(\alpha_{i+1}+1,i)$ to the end of row $i+1$ of $F$.   If $\alpha_i = \alpha_{i+1}$ we do nothing.  Notice that the resulting filling has shape $(\alpha_1,\alpha_2,\dots, \alpha_{i-1},\alpha_{i+1},\alpha_i,\alpha_{i+2}, \dots, \alpha_\ell)$.  Moreover, it is easy to see that the elements in the columns of $F$ never change.  In fact, the order in which the elements appear in the column never changes.   We can extend this action to the full symmetric group $S_\ell$ by first writing a reduced word for the permutation and then acting one transposition at a time.

Now suppose that  $Q$ is the superstandard DIRT of shape $\lambda$ and $\pi \in S_\ell$ where $\ell$ is the number of rows in $Q$.  Let $Q'$ be the filling we obtain by applying $\pi$ to $Q$.  We claim that $Q'$ is still a DIRT with row strip shape $\lambda^{rev}$.  First note that since we are only moving elements within their column, the row strips  of $Q$ and $Q'$ are the same.  This implies that the row strip shape of $Q'$ is $\lambda^{rev}$ and condition (2) of Definition~\ref{dirtDef} is satisfied for $Q'$.  Since all the columns of $Q$ increase from top to bottom and the columns never change order when we apply $\pi$, all the columns of  $Q'$ are still increasing from top to bottom. This implies both that the leftmost column is increasing from top to bottom and that the recording triple rule is trivially satisfied for $Q'$.  This shows that conditions (3) and (4) of Definition~\ref{dirtDef} are satisfied for $Q'$.

We must now verify condition (1) of Definition~\ref{dirtDef}.  That is, we must show that the rows of $Q'$ are increasing.     Suppose that $x$ was in row $k$ in $Q$ and is in row $m$ in $Q'$.  We claim that $k\leq m$.  Suppose this was not the case.  Since the shape of $Q$ is a partition, there are $k-1$ elements below $x$ in $Q$.  Since the order of the elements in the columns of $Q'$ are the same, there are still $k-1$ elements below $x$ in $Q'$.  It follows that $k\leq m$.  In $Q$ every element weakly above and left of $x$ is less than $x$.  This fact combined with the fact that $k\leq m$ implies the element in $Q'$ directly to the left of $x$ is smaller than $x$. Thus the rows increase from left to right.  We conclude that $Q'$ is a DIRT of row strip shape $\lambda^{rev}$.

To finish the proof just note that if $\alpha$ is a rearrangement of $\lambda=(\lambda_1,\lambda_2,\dots, \lambda_\ell)$, then there is some permutation $\pi$ such that $\alpha= (\lambda_{\pi(1)},\lambda_{\pi(2)},\dots, \lambda_{\pi(\ell)})$.  Let $Q$ be the superstandard tableau of shape $\lambda$.  Apply $\pi$ to $Q$ to obtain a DIRT of shape $\alpha$ and row strip shape $\lambda^{rev}$.
\end{proof}

Note that the definition of DIRT forces certain conditions on the shape and row strip shape of a DIRT.  In particular we have the following lemma.

\begin{lemma}\label{domLem}
Let $Q$ be a DIRT of shape $\alpha$ and row strip shape $\beta^{rev}$.  Then $\beta \succeq \alpha$ in dominance order.
\end{lemma}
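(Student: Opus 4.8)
The plan is to translate the dominance inequality into a statement purely about the leftmost column of $Q$, and then verify it using only the two monotonicity axioms of a DIRT: that rows increase from left to right (Definition~\ref{dirtDef}(1)) and that the leftmost column increases from top to bottom (Definition~\ref{dirtDef}(3)).

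First I would set up the bookkeeping. Each of the $\ell=\ell(\alpha)$ rows of $Q$ contributes exactly one cell to column $1$, and by Definition~\ref{dirtDef}(2) a row strip meets column $1$ in exactly one cell, namely its minimal entry; hence there are exactly $\ell$ row strips and $\ell(\beta)=\ell(\beta^{rev})=\ell$. Writing $\delta=\beta^{rev}$ for the row strip shape and listing the row strips $R_1,\dots,R_\ell$ in increasing order of their minima, we have $|R_i|=\delta_i$ and $R_i=\{1+\delta_1+\cdots+\delta_{i-1},\dots,\delta_1+\cdots+\delta_i\}$, so the $R_i$ are consecutive integer blocks. The column-$1$ entries, read from the top cell (row $\ell$) down to the bottom cell (row $1$), form the increasing sequence $Q(1,\ell)<\cdots<Q(1,1)$, and as a set these are exactly the row-strip minima $\min R_1<\cdots<\min R_\ell$; matching the two increasing listings of this common set of $\ell$ values termwise gives $Q(1,k)=\min R_{\ell-k+1}$. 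Consequently
$$\beta_1+\cdots+\beta_k=\delta_{\ell-k+1}+\cdots+\delta_\ell=|R_{\ell-k+1}\cup\cdots\cup R_\ell|=n-Q(1,k)+1,$$
since $R_{\ell-k+1}\cup\cdots\cup R_\ell=\{Q(1,k),Q(1,k)+1,\dots,n\}$. Thus proving $\beta\succeq\alpha$ reduces to showing $\alpha_1+\cdots+\alpha_k\le n-Q(1,k)+1$ for every $k$.

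The heart of the argument is the claim that every entry of $Q$ strictly smaller than $Q(1,k)$ lies in a row strictly above row $k$. To see this, suppose such an entry $x$ sits in row $r$. Since rows increase from left to right, the smallest entry of row $r$ is its leftmost entry $Q(1,r)$, so $Q(1,r)\le x<Q(1,k)$; and since $j\mapsto Q(1,j)$ is strictly decreasing, $Q(1,r)<Q(1,k)$ forces $r>k$, proving the claim. There are exactly $Q(1,k)-1$ entries smaller than $Q(1,k)$, they occupy distinct cells, and by the claim all of these cells lie in rows $k+1,\dots,\ell$, which contain $\alpha_{k+1}+\cdots+\alpha_\ell=n-(\alpha_1+\cdots+\alpha_k)$ cells. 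Hence $Q(1,k)-1\le n-(\alpha_1+\cdots+\alpha_k)$, i.e.\ $\alpha_1+\cdots+\alpha_k\le n-Q(1,k)+1=\beta_1+\cdots+\beta_k$, which is exactly $\beta\succeq\alpha$.

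I do not expect a serious obstacle here: neither the recording triple rule (Definition~\ref{dirtDef}(4)) nor the detailed column geometry of the row strips is needed, only the two monotonicity conditions. The single delicate point is the indexing — reconciling the ``top to bottom'' orientation of the leftmost column (French convention, so a larger bottom-up index $j$ is a physically higher cell with a smaller entry) with the reversal built into $\beta=\delta^{rev}$. I would therefore double-check the identity $Q(1,k)=\min R_{\ell-k+1}$ against a small example, such as the DIRT in Figure~\ref{fig:insert}, before committing to the final write-up.
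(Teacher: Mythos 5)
Your proof is correct and is essentially the paper's argument in complementary form: the paper fills in the row strips from the top and notes that when $i$ rows remain unfilled the $\beta_1+\cdots+\beta_i$ remaining entries must cover the $\alpha_1+\cdots+\alpha_i$ cells of the bottom $i$ rows, while you count the same configuration statically by showing the $Q(1,k)-1 = n-(\beta_1+\cdots+\beta_k)$ entries below $Q(1,k)$ all fit in the $n-(\alpha_1+\cdots+\alpha_k)$ cells above row $k$. Your version has the minor virtue of making explicit (via the row-increase and leftmost-column conditions) why those small entries are confined to the upper rows, a point the paper's proof leaves implicit.
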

\begin{proof}
Consider a diagram of shape $\alpha$ that will be filled in to create a DIRT $Q$ with row strip shape $\beta^{rev}$ where $\beta=(\beta_1, \beta_2, \hdots , \beta_{\ell})$.   Fill in the rows of $\alpha$ with the row strips corresponding to $\beta^{rev}$ starting from the top so that the first row strip contains the entries $1,2, \hdots , \beta_{\ell}$, the second row strip contains the entries $\beta_{\ell}+1, \beta_{\ell}+2, \hdots, \beta_{\ell}+\beta_{\ell-1}$, etc.  Consider the step when there are $i$ more rows to be filled.  (See Figure~\ref{fillDIRTFig} for an example, where the black boxes represent cells which have already been filled.)  Then there are $\alpha_1+\alpha_2+\cdots+ \alpha_i$ positions still left to be filled which are in or below the $(\ell-i+1)^{th}$ row of $\alpha$.  Since these positions must be filled and there might also be positions above row $i$ left to be filled, it must be the case that $\alpha_1+\alpha_2+\cdots+\alpha_i \leq \beta_1+\beta_2 + \cdots + \beta_i$.   As this must hold for all $i$, we have $\beta \succeq \alpha$.
\end{proof}

\begin{figure}
\begin{center}
\begin{tikzpicture}
\node at (-.6, 1) {$\vdots$};
\node at (0,0,) {\tableau{  {\blacksquare}&{\blacksquare}&{}\\ {\blacksquare}&{\blacksquare}&{\blacksquare} &{\blacksquare} {}&{}\\  {}&{}&{} &{} {}&{}&{}&{}}};
\node at (-.6,-1.3) {$\vdots$};
\node at (-.6,-2) {\tableau{ \\{}&{}&{} &{}&{}&{}\\{}&{}}};
\end{tikzpicture}
\end{center}
\caption{The empty rows in the diagram represent rows 1 through $i$. There are also empty boxes above row $i$.  These boxes must be filled to finish constructing the DIRT.}\label{fillDIRTFig}
\end{figure}
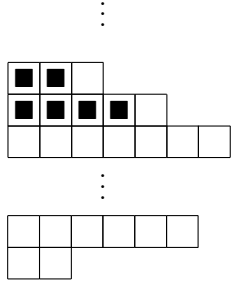

\begin{proposition}\label{partMustBeEqualProp}
Let $\lambda$ be a partition and let $\alpha$ be a composition.  There is a DIRT of shape $\lambda$ and row strip shape $\alpha^{rev}$ if and only if $\alpha=\lambda$.
\end{proposition}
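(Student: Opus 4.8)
The forward (``if'') direction is immediate: when $\alpha=\lambda$, the superstandard DIRT of shape $\lambda$ constructed above has shape $\lambda$ and row strip shape $\lambda^{rev}=\alpha^{rev}$, so such a DIRT exists. The content is the converse, and the plan is to pin $\alpha$ down by sandwiching it between $\lambda$ and itself in dominance order.

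First I would record the basic structure of a DIRT $Q$ of shape $\lambda$ and row strip shape $\alpha^{rev}$. Because a row strip meets any column in at most one cell (its entries lie in distinct columns) while its minimum lies in the leftmost column (Definition~\ref{dirtDef}(2)), each leftmost-column cell is the minimum of exactly one row strip, and conversely each row strip contributes exactly its minimum to the leftmost column. Hence the row strips are in bijection with the rows of $\lambda$: there are exactly $\ell(\lambda)$ of them, so $\ell(\alpha)=\ell(\lambda)=:\ell$, and for $1\le r\le\ell$ I may write $\alpha_r$ for the length of the unique row strip whose minimum sits in the leftmost cell of row $r$. Since the leftmost column increases from top to bottom, the row strip containing the smallest entries starts in the top row, so this indexing is consistent with the row strip shape being $\alpha^{rev}$. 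Applying Lemma~\ref{domLem} to $Q$ then gives one half of the sandwich, namely $\alpha\succeq\lambda$, i.e.\ $\sum_{r\le k}\alpha_r\ge\sum_{r\le k}\lambda_r$ for all $k$.

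For the reverse inequality I would argue by columns. Fix $k$ and let $\lambda'_j$ denote the number of cells in the $j$th column of $\lambda$. The $k$ row strips whose minima lie in the bottom $k$ rows are $k$ distinct row strips of total length $\sum_{r\le k}\alpha_r$. Since each of these row strips occupies at most one cell of each column, together they occupy at most $k$ cells of column $j$, and trivially at most $\lambda'_j$ cells of column $j$, hence at most $\min(k,\lambda'_j)$. Summing over $j$ and using the identity $\sum_j\min(k,\lambda'_j)=\sum_{r\le k}\lambda_r$ (the number of cells in the bottom $k$ rows of the partition $\lambda$, counted by columns), I obtain $\sum_{r\le k}\alpha_r\le\sum_{r\le k}\lambda_r$ for every $k$, that is $\lambda\succeq\alpha$. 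Combining the two inequalities forces all partial sums of $\alpha$ and $\lambda$ to agree, whence $\alpha_r=\lambda_r$ for all $r$ and $\alpha=\lambda$.

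The routine ingredients are the superstandard construction and Lemma~\ref{domLem}, both already available. The one genuinely new step, and the place where the partition hypothesis is essential, is the reverse dominance bound: it is precisely the distinct-columns property of row strips, together with the fact that in a partition the columns are nested (so that $\sum_j\min(k,\lambda'_j)$ counts exactly the bottom $k$ rows), that prevents a long row strip from being forced into rows that are too short to hold it in distinct columns. I expect getting this column count right, and its interaction with the row-strip/leftmost-column bijection, to be the main obstacle; once both dominance inequalities are in hand the conclusion is formal.
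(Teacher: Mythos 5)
Your proof is correct, and for the easy direction and for half of the hard direction it matches the paper: the superstandard DIRT handles $(\Leftarrow)$ (the paper routes this through Proposition~\ref{prop:rearrangeDIRT}, which is built on that same object), and Lemma~\ref{domLem} gives $\alpha\succeq\lambda$. Where you genuinely diverge is the reverse inequality $\lambda\succeq\alpha$. The paper disposes of this in one clause by citing ``the fact that among all compositions of $n$ with the same length, partitions are largest in dominance order''; read literally this is false --- $(2,2)$ does not dominate $(3,1)$ even though both are compositions of $4$ of length $2$ --- so that justification does not by itself rule out, say, a DIRT of shape $(2,2)$ with row strip shape $(1,3)$, which Lemma~\ref{domLem} also fails to exclude since $(3,1)\succeq(2,2)$. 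Your column count is exactly what is needed to close this: any $k$ row strips meet column $j$ in at most $\min(k,\lambda'_j)$ cells, and $\sum_j\min(k,\lambda'_j)=\lambda_1+\cdots+\lambda_k$ precisely because $\lambda$ is a partition, giving $\alpha_1+\cdots+\alpha_k\le\lambda_1+\cdots+\lambda_k$; in the example above this yields $\alpha_1\le 2$ and kills the length-$3$ row strip. So your argument is not merely a variant but a self-contained repair of the one step of the paper's proof that, as written, does not hold for a general partition $\lambda$ (it would hold if $\lambda$ were the hook $(n-\ell+1,1^{\ell-1})$). The remaining ingredients --- the bijection between leftmost-column cells and row strips giving $\ell(\alpha)=\ell(\lambda)$, and equality of all partial sums forcing $\alpha=\lambda$ --- are handled correctly.
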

\begin{proof}
($\Rightarrow$) This follows from Lemma~\ref{domLem}, the fact that the shape and row strip shape of a DIRT must have the same length, and the fact that  among all compositions of $n$ with the same length, partitions are largest in dominance order.

\noindent ($\Leftarrow$)  If $\lambda=\alpha$, then Proposition~\ref{prop:rearrangeDIRT} implies there is a DIRT of shape $\lambda$ and row strip shape $\lambda^{rev}$.
\end{proof}

\begin{cor}\label{allRearrangAppearCor}
Let $\alpha$ be a composition and let $\lambda$ be a partition.  Then $\YQS_{\beta}$ appears with positive coefficient for each rearrangement $\beta$ of $\alpha$ in the decomposition of $\dI_\alpha$  into the Young quasisymmetric Schur functions if and only if $\alpha=\lambda$. 
\end{cor}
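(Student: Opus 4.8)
The plan is to reduce the statement to the combinatorial count provided by Theorem~\ref{thm:main} and then invoke the two structural results about DIRTs established immediately above. Throughout, let $\lambda$ denote the unique partition that is a rearrangement of $\alpha$; under this reading the claim is that every rearrangement of $\alpha$ contributes a Young quasisymmetric Schur function to $\dI_\alpha$ exactly when $\alpha$ is already sorted, i.e.\ $\alpha=\lambda$. The first step is to observe that, by Theorem~\ref{thm:main}, the coefficient of $\YQS_\beta$ in $\dI_\alpha$ is $c_{\alpha,\beta}$, the number of DIRTs of shape $\beta$ with row strip shape $\alpha^{rev}$. Hence $\YQS_\beta$ appears with positive coefficient if and only if at least one such DIRT exists, and the corollary becomes a purely combinatorial assertion about the existence of DIRTs.

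For the implication ($\Leftarrow$), I would assume $\alpha=\lambda$, so that $\alpha$ is itself a partition. Any rearrangement $\beta$ of $\alpha$ is then a rearrangement of $\lambda$, so Proposition~\ref{prop:rearrangeDIRT} produces a DIRT of shape $\beta$ with row strip shape $\lambda^{rev}=\alpha^{rev}$. This shows $c_{\alpha,\beta}>0$ for every rearrangement $\beta$, which is exactly the desired conclusion.

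For the implication ($\Rightarrow$), I would assume that $\YQS_\beta$ occurs with positive coefficient for every rearrangement $\beta$ of $\alpha$ and specialize to the partition rearrangement $\beta=\lambda$. Positivity of the coefficient of $\YQS_\lambda$ means there is a DIRT of shape $\lambda$ with row strip shape $\alpha^{rev}$, and Proposition~\ref{partMustBeEqualProp} then forces $\alpha=\lambda$.

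Since both halves follow directly from Propositions~\ref{prop:rearrangeDIRT} and~\ref{partMustBeEqualProp}, no genuinely new combinatorics is required; the real content already lives in those propositions. The only point demanding care is interpretive: the symbol $\lambda$ must be read as the partition obtained by sorting the parts of $\alpha$, so that ``$\alpha=\lambda$'' is synonymous with ``$\alpha$ is a partition,'' and one must check that the reverse-shape conditions coincide (indeed $\lambda^{rev}=\alpha^{rev}$ precisely when $\alpha=\lambda$). With that identification in place, the argument is immediate and there is no substantive obstacle to overcome.
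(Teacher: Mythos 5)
Your proof is correct and follows essentially the same route as the paper: the forward direction specializes to the partition rearrangement $\beta=\lambda$ and applies Proposition~\ref{partMustBeEqualProp}, while the reverse direction invokes Proposition~\ref{prop:rearrangeDIRT} for each rearrangement, with Theorem~\ref{thm:main} translating between coefficients and DIRT counts in both directions. Your explicit remark that $\lambda$ must be read as the sorted rearrangement of $\alpha$ is a reasonable clarification of the statement, but the substance of the argument matches the paper's.
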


\begin{proof}
($\Rightarrow$)  In this case, the coefficient of $\YQS_\lambda$ is positive in the decomposition of $\dI_\alpha$.  This together with Theorem~\ref{thm:main}  imply that there is a DIRT of shape $\alpha$ and row strip shape $\lambda^{rev}$.  Thus by Proposition~\ref{partMustBeEqualProp},  $\alpha=\lambda$.\\

\noindent ($\Leftarrow$) Suppose that $\alpha=\lambda$.  Then Proposition~\ref{prop:rearrangeDIRT} implies that there is a DIRT of each shape which is  a rearrangement of $\lambda$ whose row strip shape $\lambda^{rev}$.  The result then follows from Theorem~\ref{thm:main}. 
\end{proof}

\subsection{Connections to $Sym$}

The next theorem will be useful in  what follows.

\begin{theorem}[\cite{LMvW13}]\label{SchurToYQS}
Let $\lambda$ be a partition.  Then
$$
s_\lambda = \sum_{\alpha} \YQS_\alpha
$$
where the sum is over all compositions which rearrange to $\lambda$.
\end{theorem}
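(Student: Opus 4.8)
The plan is to reduce the identity to a single combinatorial bijection by comparing expansions in the monomial quasisymmetric basis $\{M_\mu\}$. Since $s_\lambda$ is symmetric, the coefficient of $M_\mu$ in $s_\lambda$, for any composition $\mu$, is the Kostka number $K_{\lambda\mu}$, the number of semistandard Young tableaux of shape $\lambda$ and content $\mu$. Reading the defining sum $\YQS_\alpha = \sum_U x^U$, the coefficient of $M_\mu$ in $\YQS_\alpha$ is, by quasisymmetry, the number of semistandard Young composition tableaux (SSYCT) of shape $\alpha$ in which the entry $i$ occurs exactly $\mu_i$ times. Hence the asserted identity is equivalent to the statement that for every composition $\mu$,
\[
\sum_{\alpha} \#\{\text{SSYCT of shape }\alpha\text{ and content }\mu\} = K_{\lambda\mu},
\]
the sum running over all compositions $\alpha$ that rearrange to $\lambda$. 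So it suffices to construct a content-preserving bijection $\Phi$ from $\bigsqcup_{\alpha\,\mathrm{rearr.}\,\lambda}\{\text{SSYCT of shape }\alpha\}$ to $\{\text{SSYT of shape }\lambda\}$.

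First I would isolate the structural feature of an SSYCT that makes such a map possible. The key fact, already invoked in the rapture argument (Subcase 2a(I)), is that the Young composition triple rule forces the entries in each column of an SSYCT to be \emph{distinct}. Moreover column $c$ of a shape $\alpha$ has $\#\{i:\alpha_i\ge c\}$ cells, which equals $\lambda'_c$ whenever $\alpha$ rearranges to $\lambda$, so the column sizes of any such SSYCT already match those of shape $\lambda$. My map $\Phi$ would therefore record, for each column index $c$, the \emph{set} of entries occurring in column $c$ of the SSYCT, and then form the filling of the partition diagram $\lambda$ whose $c$-th column is that set written in increasing order from bottom to top. The inverse $\Psi$ would take an SSYT of shape $\lambda$ and redistribute each column set across the rows into the unique composition shape and arrangement forced by the leftmost-column condition and the triple rule.

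The main obstacle, and essentially all of the work, is proving that $\Phi$ and $\Psi$ are well defined and mutually inverse. In the forward direction I must show that re-sorting the column sets produces a genuine SSYT: columns are strictly increasing by construction, but the \emph{rows} must be shown to be weakly increasing, and this is where the interplay of the row condition, the leftmost-column condition, and the triple rule is needed. Note that this straightening genuinely rearranges entries rather than merely permuting rows, since the columns of an SSYCT need not themselves increase from bottom to top. In the reverse direction the delicate point is uniqueness: one must show that the column sets of an SSYT of shape $\lambda$ can be distributed among rows in exactly one way so as to yield an SSYCT whose shape rearranges to $\lambda$, with the triple rule pinning down the row into which each column entry must fall. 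Once well-definedness and bijectivity are established, content is preserved automatically because $\Phi$ only reindexes the existing cells, and the displayed count — hence the theorem — follows.

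Finally, I would note a more algebraic alternative via the duality between $QSym$ and $NSym$. Writing $s_\lambda = \sum_\alpha c_{\lambda\alpha}\YQS_\alpha$, the coefficient $c_{\lambda\alpha}$ is the pairing of $s_\lambda$ with the Young noncommutative Schur function $\mathbf{s}_\alpha$ dual to $\YQS_\alpha$. Since the inclusion $Sym\hookrightarrow QSym$ is adjoint to the forgetful projection $\pi\colon NSym\twoheadrightarrow Sym$, this pairing collapses to $\langle s_\lambda,\pi(\mathbf{s}_\alpha)\rangle_{Sym}$, and once one knows $\pi(\mathbf{s}_\alpha)=s_{\tilde\alpha}$ this equals $\langle s_\lambda,s_{\tilde\alpha}\rangle=\delta_{\lambda,\tilde\alpha}$, giving the result at once. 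I would nonetheless take the bijective argument as the primary proof, since it remains inside the tableau framework developed in this paper and imports no external projection property.
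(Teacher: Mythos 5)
The paper offers no proof of this statement: it is imported verbatim from \cite{LMvW13}. Your strategy---reduce to coefficients of $M_\mu$, so that the identity becomes $\sum_{\alpha}\#\{\text{SSYCT of shape }\alpha,\ \text{content }\mu\}=K_{\lambda\mu}$, and then exhibit a content-preserving bijection with SSYT of shape $\lambda$ by recording column sets and sorting each column---is precisely the argument given in the cited source (and, for the reversed ``Demazure atom'' version, in Mason's earlier work), so you are not taking a genuinely different route from the authoritative proof. Your preliminary observations are all correct: the triple rule does force distinct column entries, the column multisets of a shape rearranging $\lambda$ do match the column lengths $\lambda'_c$, and the straightening is a genuine rearrangement since only the leftmost column of an SSYCT is ordered.

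The caveat is that, as written, this is a proof plan rather than a proof: you correctly isolate the two substantive claims---(i) that sorting the column sets yields weakly increasing rows, and (ii) that an SSYT's column sets can be redistributed into an SSYCT whose shape rearranges $\lambda$ in exactly one way---and you prove neither. Those two lemmas are essentially the entire content of the bijection in \cite{LMvW13}; the inverse in particular is constructed by a greedy column-by-column placement whose well-definedness (it never gets stuck, and the output satisfies the triple rule) is the delicate part. So the plan is sound and completable, but the work you flag as ``essentially all of the work'' is genuinely all of the work. Your algebraic alternative is weaker than it looks: the projection identity $\pi(\dYQS_\alpha)=s_{\tilde\alpha}$ is, by the duality you invoke, equivalent to the theorem itself, so that route merely relocates the problem rather than solving it.
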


We note that  Corollary~\ref{allRearrangAppearCor}, Theorem~\ref{SchurToYQS}, and the fact that there is only one DIRT of shape $\lambda$ and row strip shape $\lambda^{rev}$ imply that  $\dI_\lambda = s_\lambda + \sum_\alpha c_\alpha \YQS_\alpha$ where $\alpha \succ \lambda$ and $c_\alpha\geq 0$, if and only if $\lambda$ is a partition.

Using the decomposition given in Theorem~\ref{thm:main}, we can discuss Schur positivity for the dual immaculate quasisymmetric functions.  We note that this proposition appeared as a consequence of the work in~\cite[Corollary 3.40]{BerBerSalSerZab14}, but is proved differently.

\begin{proposition}\label{schurPosProp}
If $f= \sum_{\alpha} c_\alpha \dI_\alpha$ is symmetric and $c_\alpha\geq 0$ for all $\alpha$, then $f$ is Schur positive.
\end{proposition}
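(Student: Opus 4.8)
The plan is to combine Theorem~\ref{thm:main} with the Schur expansion of Theorem~\ref{SchurToYQS} and then compare coefficients in the $\YQS$ basis. First I would apply Theorem~\ref{thm:main} to each summand of $f$ and collect terms, writing
$$
f = \sum_\beta b_\beta \YQS_\beta, \qquad b_\beta = \sum_\alpha c_\alpha\, c_{\alpha,\beta}.
$$
Since $c_\alpha \geq 0$ by hypothesis and each $c_{\alpha,\beta} \geq 0$ (being a count of DIRTs), every coefficient $b_\beta$ is nonnegative. Thus $f$ is manifestly \emph{Young quasisymmetric Schur positive}; the content of the proposition is to upgrade this to Schur positivity using the assumption that $f$ is symmetric.

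Next I would use that $\{\YQS_\beta\}_{\beta \vDash n}$ is a basis of $QSym_n$ and that, because $f$ is symmetric, $f$ lies in the span of the Schur functions, so $f = \sum_\lambda a_\lambda s_\lambda$ for unique scalars $a_\lambda$. Substituting Theorem~\ref{SchurToYQS}, namely $s_\lambda = \sum_{\widetilde\beta = \lambda}\YQS_\beta$ where $\widetilde\beta$ denotes the partition obtained by sorting the parts of $\beta$, gives
$$
f = \sum_\lambda a_\lambda \sum_{\widetilde\beta = \lambda} \YQS_\beta.
$$
By uniqueness of the expansion of $f$ in the $\YQS$ basis, matching coefficients yields $b_\beta = a_{\widetilde\beta}$ for every composition $\beta$.

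The key observation that finishes the argument is to read this identity at a composition that is already a partition. For a partition $\lambda$, the only Schur function whose $\YQS$-expansion contains $\YQS_\lambda$ is $s_\lambda$ itself, since $\lambda$ rearranges to a partition $\mu$ precisely when $\mu = \lambda$. Hence $a_\lambda = b_\lambda$, and since $b_\lambda \geq 0$ from the first step, we conclude $a_\lambda \geq 0$ for every $\lambda$. Therefore $f = \sum_\lambda a_\lambda s_\lambda$ with all $a_\lambda \geq 0$, i.e., $f$ is Schur positive.

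I do not anticipate a serious obstacle here: the entire argument is coefficient-matching within a single basis, and the only subtlety is recognizing that it suffices to inspect the coefficients indexed by partitions, where the $\YQS$-to-Schur correspondence is diagonal. The one bookkeeping point worth stating carefully is the uniqueness of the $\YQS$-expansion, so that each $b_\beta$ is well defined from $f$ alone; this is immediate because $\{\YQS_\beta\}_{\beta \vDash n}$ is a basis of $QSym_n$.
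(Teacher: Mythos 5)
Your proof is correct and follows essentially the same route as the paper: both arguments combine Theorem~\ref{thm:main} (to get Young quasisymmetric Schur positivity) with Theorem~\ref{SchurToYQS} (to relate Schur and $\YQS$ coefficients). The paper phrases this as a one-line contradiction, whereas you make explicit the underlying coefficient-matching fact $b_\beta = a_{\widetilde\beta}$ that the paper leaves implicit; the mathematical content is the same.
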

\begin{proof}
  If $f$ is symmetric, but not Schur positive, then by Theorem~\ref{SchurToYQS} $f$ would not be Young quasisymmetric Schur positive.  However, if $c_\alpha\geq 0$ for all $\alpha$, then Theorem~\ref{thm:main} implies that $f$ is Young quasisymmetric Schur positive, a contradiction.
\end{proof}

We can characterize when a single dual immaculate quasisymmetric function is symmetric.  In particular,  we show it must be indexed by a certain hook shape.

\begin{proposition}
Let $\alpha$ be a composition of $n$ of length $k+1$.  Then $\dI_\alpha$ is symmetric if and only if $\alpha=(n-k, 1^{k})$.
\end{proposition}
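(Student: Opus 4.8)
The plan is to reduce symmetry to a statement about the monomial expansion of $\dI_\alpha$ and then handle the two directions with different tools: the forward direction by a dominance estimate on immaculate tableaux, and the reverse direction by the combinatorics of DIRTs together with Theorem~\ref{thm:main}. Recall first that a quasisymmetric function is symmetric precisely when, in its expansion in the monomial basis $\{M_\gamma\}$, the coefficient of $M_\gamma$ depends only on the partition obtained by sorting the parts of $\gamma$. I will use two elementary facts about $\dI_\alpha=\sum_U x^U$, both read directly from the definition of immaculate tableaux. First, the coefficient of $M_\alpha$ in $\dI_\alpha$ is $1$: the only immaculate tableau of shape $\alpha$ with content $\alpha$ fills the $i$th row entirely with the value $i$ (the strictly increasing first column forces the values $1,\dots,\ell(\alpha)$ into that column, after which each row is determined). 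Second, any immaculate tableau of shape $\alpha$ with content $\gamma$ satisfies $\alpha\succeq\gamma$ in dominance order: the cells carrying an entry $\le v$ lie in the rows whose leftmost (hence smallest) entry is $\le v$, and since leftmost entries strictly increase up the first column, there are at most $v$ such rows and they form a bottom segment, so these cells number at most $\alpha_1+\cdots+\alpha_v$. Consequently $M_\gamma$ occurs in $\dI_\alpha$ with nonzero coefficient only if $\alpha\succeq\gamma$.

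For the forward direction, suppose $\dI_\alpha$ is symmetric and let $\lambda$ be the partition obtained by sorting $\alpha$. Since $\lambda$ is a rearrangement of $\alpha$ and the coefficient of $M_\alpha$ is $1$, symmetry forces the coefficient of $M_\lambda$ to be $1$ as well, in particular nonzero; the dominance fact then gives $\alpha\succeq\lambda$. But $\lambda\succeq\alpha$ always holds for the sorted rearrangement, so $\alpha=\lambda$ is a partition. Now assume for contradiction that this partition has two parts that are at least $2$, i.e.\ $\lambda_2\ge 2$. Consider the immaculate tableau of shape $\lambda$ filling the first row as $1,2,\dots,2$, the second row as $2,\dots,2$, and the $j$th row ($j\ge 3$) as $j,\dots,j$; this is a valid immaculate tableau with content $\gamma=(1,\ \lambda_1+\lambda_2-1,\ \lambda_3,\dots,\lambda_{k+1})$. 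Its largest part $\lambda_1+\lambda_2-1$ exceeds $\lambda_1$, so the sorted version of $\gamma$ is not dominated by $\lambda$, whence the coefficient of $M_{\mathrm{sort}(\gamma)}$ in $\dI_\lambda$ is $0$, while that of $M_\gamma$ is at least $1$. Since $\gamma$ and $\mathrm{sort}(\gamma)$ are rearrangements of one another, this contradicts symmetry. Therefore $\lambda_2\le 1$, which together with $\ell(\lambda)=k+1$ and $|\lambda|=n$ yields $\lambda=(n-k,1^{k})$.

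For the reverse direction, let $\alpha=(n-k,1^{k})$ and expand $\dI_\alpha=\sum_\beta c_{\alpha,\beta}\YQS_\beta$ via Theorem~\ref{thm:main}, where $c_{\alpha,\beta}$ counts DIRTs of shape $\beta$ with row strip shape $\alpha^{rev}=(1^{k},n-k)$. Here the row strip shape prescribes $k$ singleton strips $\{1\},\dots,\{k\}$ followed by one strip $\{k+1,\dots,n\}$ of length $n-k$. Because each strip begins in the leftmost column (Definition~\ref{dirtDef}) and consecutive singletons force their members into a common column, the entries $1,2,\dots,k+1$ must fill the leftmost column from top to bottom; the remaining entries $k+2,\dots,n$ then occupy every non-leftmost cell, and as members of the single long strip they lie in pairwise distinct columns. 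Hence every column other than the first has exactly one cell, so $\beta$ has one part equal to $n-k$ and all other parts equal to $1$; that is, $\beta$ is a rearrangement of $(n-k,1^{k})$, and the filling is then completely determined, giving $c_{\alpha,\beta}=1$ for each such $\beta$ and $0$ otherwise (the recording triple rule is automatic, the first column being the only column of height exceeding one). Summing, $\dI_{(n-k,1^k)}=\sum_{\beta}\YQS_\beta$ over all rearrangements of $(n-k,1^k)$, which equals $s_{(n-k,1^k)}$ by Theorem~\ref{SchurToYQS} and is thus symmetric.

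The crux is the forward direction, specifically the step forcing $\alpha$ to be a partition. This does \emph{not} follow from the $\YQS$-level dominance of Lemma~\ref{domLem}: that estimate only constrains the $\YQS$-support of $\dI_\alpha$ up to dominance and is fully consistent with an entire rearrangement class of coefficients vanishing (for instance $c_{(1,2),(1,2)}=0$). The finer information is carried by the monomial expansion, where the exact normalization $[M_\alpha]\dI_\alpha=1$ together with the dominance bound pins down the sorted content and forces $\alpha=\lambda$; eliminating the remaining non-hook partitions is then achieved by explicitly producing a tableau whose content has an oversized part. By comparison the DIRT bookkeeping in the reverse direction is routine, the only mild subtlety being the verification that the long strip is forced into distinct columns.
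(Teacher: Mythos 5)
Your proof is correct, and your forward direction takes a genuinely different route from the paper's. The paper stays entirely inside the $\YQS$-expansion: it first observes that for \emph{any} $\alpha$ of length $k+1$ there is exactly one DIRT of shape $(1^k,n-k)$ and row strip shape $\alpha^{rev}$, so $[\YQS_{(1^k,n-k)}]\dI_\alpha=1$; symmetry plus Theorem~\ref{SchurToYQS} then forces the coefficient of $\YQS_{(n-k,1^k)}$ to also be $1$, hence nonzero, and Lemma~\ref{domLem} together with the maximality of $(n-k,1^k)$ in dominance order among length-$(k+1)$ compositions gives $\alpha=(n-k,1^k)$ in one stroke. You instead work at the level of the monomial expansion of $\dI_\alpha$ itself: the normalization $[M_\alpha]\dI_\alpha=1$, the content-dominance bound for immaculate tableaux, and an explicit tableau with an oversized part to kill non-hook partitions. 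Your argument is more elementary and self-contained (it never invokes Theorem~\ref{thm:main} or the insertion machinery in the forward direction, only the definition of immaculate tableaux), at the cost of needing two separate steps (first $\alpha$ is a partition, then $\alpha$ is a hook); the paper's argument is shorter but leans on the full weight of the main theorem and the Schur-to-$\YQS$ decomposition. Your reverse direction coincides with the paper's, and you usefully supply the details the paper waves at with ``it is not hard to check.''

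One factual slip in your closing commentary: the example $c_{(1,2),(1,2)}=0$ is false. The filling of shape $(1,2)$ with top row $1,2$ and bottom row $3$ is a DIRT with row strip shape $(2,1)=(1,2)^{rev}$, so $c_{(1,2),(1,2)}=1$ (indeed $\dI_{(1,2)}=\YQS_{(1,2)}=F_{(1,2)}$, consistent with $(1,2)=(n-k,1^k)$ for $n=3$, $k=1$). Moreover the paper's proof is not the strawman you describe: it does not deduce nonvanishing from dominance alone, but from the exact coefficient $1$ of $\YQS_{(1^k,n-k)}$ transported across the rearrangement class by symmetry. Neither point affects the validity of your proof.
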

\begin{proof}
\noindent $(\Rightarrow)$ Suppose $\dI_\alpha$ is symmetric.  For any composition $\alpha$ of $n$ with length $k+1$, there is exactly one DIRT of shape $(1^{k},n-k)$ and row strip shape $\alpha^{rev}$.  Thus, Theorem~\ref{thm:main} implies that  $\YQS_{(1^{k},n-k)}$ appears in the decomposition of $\dI_\alpha$ with  coefficient 1.  Since $\dI_\alpha$ is symmetric it can be written in the Schur basis. Therefore Theorem~\ref{SchurToYQS} implies that all the Young quasisymmetric Schur functions whose indices rearrange $(n-k,1^{k})$ appear in the decomposition of $\dI_\alpha$ with  coefficient  at least 1.   In particular, Theorem~\ref{SchurToYQS} implies that $\YQS_{(n-k, 1^{k})}$ appears in the decomposition of $\dI_\alpha$ with nonzero coefficient.    Therefore, there must be a DIRT of shape $(n-k, 1^{k})$ and row strip shape $\alpha^{rev}$. Since $(n-k, 1^{k})$ is the  largest in dominance order for compositions of $n$ of length $k+1$, Lemma~\ref{domLem} implies that  $\alpha= (n-k, 1^{k})$ as claimed.\\

\noindent$(\Leftarrow)$ Suppose that $\alpha=(n-k, 1^{k})$.  It is not hard to check that the only DIRTs with row strip shape $(n-k, 1^{k})^{rev}$ must have shape which is a rearrangement of $(n-k, 1^{k})$.  Moreover, there is exactly one such DIRT for each rearrangement of $(n-k, 1^{k})$.  Thus,
$$
\dI_{(n-k, 1^{k})} = \sum \YQS_{\beta}
$$
where the sum is over all rearrangements of $(n-k, 1^{k})$.  Hence, Theorem~\ref{SchurToYQS} implies
$$
\dI_{(n-k, 1^{k})} = s_{(n-k, 1^{k})}
$$
and so $\dI_{(n-k, 1^{k})}$ is symmetric.
\end{proof}

Although we defined the Young quasisymmetric Schur functions as sums over fillings of semistandard Young composition tableaux, the original definition in~\cite{LMvW13} is
\begin{equation}\label{YQSEq}
\YQS_\alpha = \rho(\QS_{\alpha^{rev}}),
\end{equation}
where $\QS_{\alpha}$ is the quasisymmetric Schur function indexed by $\alpha$ and $\rho: QSym \rightarrow QSym$ is the algebra automorphism defined on the fundamental basis by
$$
\rho(F_\beta) = F_{\beta^{rev}}.
$$
The map $\rho$ is refered to as the \emph{star involution}.  A useful fact about $\rho$ is that it is the identity on $Sym$.

In~\cite{HLMvW10}, it was shown that if $s_\lambda$ is a Schur function, then the product $s_\lambda\QS_\alpha$ expands positively in the quasisymmetric Schur basis.  We now show an analogous result involving both the dual immaculate basis and the Young quasisymmetric Schur basis.

\begin{proposition}
Let $s_\lambda$ be any Schur function and $\dI_\alpha$ be any dual immaculate quasisymmetric function.  Then the product $s_\lambda \dI_\alpha$ expands positively in the Young quasisymmetric Schur basis.
\end{proposition}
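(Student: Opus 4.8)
The plan is to reduce the statement to the positivity of $s_\lambda \QS_\gamma$ in the quasisymmetric Schur basis established in~\cite{HLMvW10}, transporting that result to the Young side through the star involution $\rho$. First I would invoke Theorem~\ref{thm:main} to write $\dI_\alpha = \sum_\beta c_{\alpha,\beta}\, \YQS_\beta$ with every $c_{\alpha,\beta}\geq 0$, so that
$$
s_\lambda \dI_\alpha = \sum_\beta c_{\alpha,\beta}\, s_\lambda \YQS_\beta.
$$
Because the $c_{\alpha,\beta}$ are nonnegative, it suffices to prove that each individual product $s_\lambda \YQS_\beta$ expands positively in the Young quasisymmetric Schur basis.

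The key step is to rewrite $s_\lambda \YQS_\beta$ using the defining relation~\eqref{YQSEq}. Since $\YQS_\beta = \rho(\QS_{\beta^{rev}})$, since $\rho$ is an algebra automorphism, and since $\rho$ restricts to the identity on $Sym$ (so that $\rho(s_\lambda)=s_\lambda$), we obtain
$$
s_\lambda \YQS_\beta = \rho(s_\lambda)\,\rho(\QS_{\beta^{rev}}) = \rho\bigl(s_\lambda \QS_{\beta^{rev}}\bigr).
$$
Now I would apply the result of~\cite{HLMvW10}, which gives $s_\lambda \QS_{\beta^{rev}} = \sum_\gamma a_\gamma \QS_\gamma$ with all $a_\gamma \geq 0$. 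Applying $\rho$ term by term and using that~\eqref{YQSEq} yields $\rho(\QS_\gamma) = \YQS_{\gamma^{rev}}$ (set $\delta=\gamma^{rev}$ in $\YQS_\delta = \rho(\QS_{\delta^{rev}})$ and use that reversal is an involution), I arrive at
$$
s_\lambda \YQS_\beta = \sum_\gamma a_\gamma\, \YQS_{\gamma^{rev}},
$$
a nonnegative combination of Young quasisymmetric Schur functions. Substituting this back gives $s_\lambda \dI_\alpha = \sum_\beta \sum_\gamma c_{\alpha,\beta}\, a_\gamma\, \YQS_{\gamma^{rev}}$, which is manifestly Young quasisymmetric Schur positive.

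The only point requiring care is the bookkeeping of composition reversals under $\rho$, in particular the derivation of $\rho(\QS_\gamma) = \YQS_{\gamma^{rev}}$ from the defining identity $\YQS_\delta = \rho(\QS_{\delta^{rev}})$. There is no genuine combinatorial obstacle: the entire argument is an application of Theorem~\ref{thm:main} together with the two structural facts that $\rho$ is an algebra automorphism and that it fixes $Sym$, which together let me pull the already-known quasisymmetric Schur positivity of $s_\lambda \QS_\gamma$ over to the Young quasisymmetric Schur setting.
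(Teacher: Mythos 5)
Your proposal is correct and follows essentially the same route as the paper's proof: reduce via Theorem~\ref{thm:main} to the positivity of $s_\lambda \YQS_\beta$, then use that the star involution $\rho$ is an algebra automorphism fixing $Sym$ together with $\YQS_\beta = \rho(\QS_{\beta^{rev}})$ to transport the quasisymmetric Schur positivity of $s_\lambda\QS_{\beta^{rev}}$ from~\cite{HLMvW10} to the Young side. The bookkeeping of reversals you flag is handled exactly as you describe, since reversal is an involution on compositions.
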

\begin{proof}
First we show that if $s_\lambda$ is any Schur function and $\YQS_\alpha$ is any Young quasisymmetric Schur function, then $s_\lambda\YQS_\alpha$ expands positively in the Young quasisymmetric Schur basis.  The result will then follow by Theorem~\ref{thm:main}.

Let $\rho$ be the star involution.  Since $\rho$ is the identity on $Sym$, $\rho(s_\lambda) = s_\lambda$.  Moreover, by Equation~\ree{YQSEq} we have that $\YQS_\alpha = \rho(\QS_{\alpha^{rev}})$.  Thus,
\begin{align*}
s_\lambda\YQS_\alpha &= \rho(s_\lambda)\rho(\QS_{\alpha^{rev}})\\
&= \rho(s_\lambda\QS_{\alpha^{rev}})\\
&= \rho\left(\sum c_\beta \QS_{\beta}\right)\\
&=\sum c_\beta \rho(\QS_\beta)\\
&= \sum c_\beta \YQS_{\beta^{rev}}.
\end{align*}
By~\cite{HLMvW10}, we have that $c_\beta\geq 0$ for all $\beta$.  It follows that $s_\lambda\YQS_\alpha$ expands positively in the Young quasisymmetric Schur basis.
\end{proof}
We note that the product $s_\lambda\dI_\alpha$ does \textit{not}, in general, expand positively in the dual immaculate basis.  For example, $s_{(2,1)}\dI_{(1)}$ has negative terms in its expansion in the dual immaculate basis.  Having a positive expansion in the dual immaculate basis is not to be expected since a symmetric function which is Schur positive does not in general expand positively in the dual immaculate basis.  In fact, the reader may have noticed that $\dI_{(1)} = s_{(1)}$ and so $s_{(2,1)}\dI_{(1)}$ is symmetric and  Schur positive.

\subsection{Decompositions in $NSym$}

Recall that $QSym$ and $NSym$ are dual spaces.  The basis dual to the dual immaculate quasisymmetric functions is called the \emph{immaculate basis} and was introduced in~\cite{BerBerSalSerZab14}.  Just like the elements of $QSym$, the elements of $NSym$ are indexed by compositions and we denote the immaculate function indexed by $\alpha$ as $\I_\alpha$.  The basis dual to the Young quasisymmetric Schur functions is called the \emph{Young noncommutative Schur basis} and was introduced in~\cite{LMvW13}.  The Young noncommutative
Schur function indexed by $\alpha$ is denoted by $\dYQS_\alpha$.

The decomposition of the dual immaculate  functions into the Young quasisymmetric Schur functions also provides the decomposition between the duals of these bases in $NSym$.  If $V$ and $W$ are finite dimensional vector spaces with bases $B$ and $C$ respectively and $A$ is the change of basis matrix from $B$ to $C$, then $A^T$ is the change of basis matrix from the dual  basis $C^*$ of $W^*$ to the  dual   basis $B^*$ of $V^*$.  Thus, we obtain the dual version of Theorem~\ref{thm:main}.

\begin{theorem}\label{thm:mainDual}
The Young noncommutative Schur functions decompose into the immaculate functions in the following way:
$$
\dYQS_\alpha = \sum_{\beta} c_{\beta, \alpha} \I_\beta
$$
where $c_\beta,\alpha$ is the number of DIRTs of shape $\alpha$ and row strip shape $\beta^{rev}$.
\end{theorem}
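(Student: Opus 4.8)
The plan is to deduce Theorem~\ref{thm:mainDual} directly from Theorem~\ref{thm:main} via the transpose principle recalled immediately before the statement, so that no new combinatorics is required. Because $QSym$ and $NSym$ are graded, with $QSym = \bigoplus_{n\ge 0} QSym_n$ and an analogous decomposition for $NSym$, and because the duality pairing $\langle \cdot , \cdot \rangle \colon NSym \times QSym \to \mathbb{Q}$ respects the grading, it restricts to a nondegenerate pairing $NSym_n \times QSym_n \to \mathbb{Q}$ for each $n$. I would therefore fix a degree $n$ and work inside the finite-dimensional spaces $NSym_n$ and $QSym_n$, whose bases are indexed by the compositions of $n$; this is exactly the finite-dimensional setting in which the transpose principle applies.

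First I would record the two dual-basis relations defining the pairing: $\langle \I_\beta, \dI_\gamma \rangle = \delta_{\beta,\gamma}$, since the immaculate and dual immaculate bases are dual, and $\langle \dYQS_\alpha, \YQS_\delta \rangle = \delta_{\alpha,\delta}$, since the Young noncommutative Schur and Young quasisymmetric Schur bases are dual. Expanding $\dYQS_\alpha = \sum_{\beta} a_{\alpha,\beta}\, \I_\beta$ and pairing both sides against $\dI_\gamma$, the first relation gives $\langle \dYQS_\alpha, \dI_\gamma \rangle = a_{\alpha,\gamma}$. Substituting instead the expansion $\dI_\gamma = \sum_{\delta} c_{\gamma,\delta}\, \YQS_\delta$ from Theorem~\ref{thm:main} and applying the second relation gives $\langle \dYQS_\alpha, \dI_\gamma \rangle = \sum_{\delta} c_{\gamma,\delta}\,\langle \dYQS_\alpha, \YQS_\delta \rangle = c_{\gamma,\alpha}$. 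Comparing the two evaluations forces $a_{\alpha,\gamma} = c_{\gamma,\alpha}$, that is,
$$\dYQS_\alpha = \sum_{\beta} c_{\beta,\alpha}\, \I_\beta,$$
so the transition matrix for the dual bases is the transpose of the one in Theorem~\ref{thm:main}.

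Finally I would note that the coefficient $c_{\beta,\alpha}$ retains its combinatorial meaning from Theorem~\ref{thm:main}, namely the number of DIRTs of shape $\alpha$ with row strip shape $\beta^{rev}$, which is precisely what the statement asserts. The only delicate point is index bookkeeping: one must track carefully which composition plays the role of shape and which of row strip shape, so that transposing yields $c_{\beta,\alpha}$ rather than $c_{\alpha,\beta}$. Beyond this, and the routine check that the pairing is graded so that the finite-dimensional duality argument is legitimate in each degree, there is no real obstacle, since all of the combinatorial content already resides in Theorem~\ref{thm:main}.
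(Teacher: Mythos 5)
Your proof is correct and follows essentially the same route as the paper, which also deduces the result from Theorem~\ref{thm:main} by the transpose-of-the-change-of-basis-matrix principle for dual bases (stated in the paper just before the theorem); your version merely spells out the pairing computation and the index bookkeeping in more detail. No issues.
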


For some choices of $\alpha$ the value of $c_{\beta,\alpha}$ is easy to determine as we see in the following corollary.

\begin{cor}
We have the following.
\begin{enumerate}
\item  Let $\alpha$ be a composition.  Then $\dYQS_\alpha = \I_\alpha$ if and only if $\alpha$ is a partition.

\item For the hook shape $(1^k, n-k)$, we have
$$
\dYQS_{(1^k, n-k)} = \sum_{\substack{\beta\vDash n\\ \ell(\beta)= k+1}} \I_\beta.
$$
\end{enumerate}
\end{cor}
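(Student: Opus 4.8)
The plan is to read both identities off the dual expansion of Theorem~\ref{thm:mainDual}, $\dYQS_\alpha=\sum_\beta c_{\beta,\alpha}\I_\beta$, where $c_{\beta,\alpha}$ counts DIRTs of shape $\alpha$ and row strip shape $\beta^{rev}$. Since $\{\I_\beta\}$ is a basis, each claimed equality is equivalent to a statement about these coefficients, so everything reduces to counting DIRTs.

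For part (1), note that $\dYQS_\alpha=\I_\alpha$ holds if and only if $c_{\alpha,\alpha}=1$ and $c_{\beta,\alpha}=0$ for all $\beta\neq\alpha$. For the reverse implication I would set $\alpha=\lambda$ a partition and apply Proposition~\ref{partMustBeEqualProp}, which says a DIRT of partition shape $\lambda$ exists only with row strip shape $\lambda^{rev}$; hence $c_{\beta,\lambda}=0$ for $\beta\neq\lambda$, while the uniqueness of the superstandard DIRT (recorded just before Proposition~\ref{prop:rearrangeDIRT}) gives $c_{\lambda,\lambda}=1$. For the forward implication I would argue by contrapositive: if $\alpha$ is not a partition, let $\lambda$ be its decreasing rearrangement, so $\lambda\neq\alpha$; Proposition~\ref{prop:rearrangeDIRT} then produces a DIRT of shape $\alpha$ and row strip shape $\lambda^{rev}$, forcing $c_{\lambda,\alpha}\geq 1$. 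Thus $\dYQS_\alpha$ carries the off-diagonal term $\I_\lambda$ and cannot equal $\I_\alpha$.

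For part (2) the task is to count DIRTs of the hook shape $(1^k,n-k)$. The first step is a structural reduction: the only column of this shape with more than one cell is the leftmost one (height $k+1$), and all of columns $2,\dots,n-k$ lie in the top row. Consequently the recording triple rule is vacuous, because it is triggered only by two cells of a single column whose upper entry is larger, whereas the leftmost column increases from top to bottom (the higher entry being the smaller) and no other column is tall. Since every cell lies in the leftmost column or in the top row, the global minimum sits at their common corner, forcing $1$ into the leftmost column; a rows-increasing, leftmost-column-increasing filling is then completely determined by the set $S$ of leftmost-column entries, a $(k+1)$-subset of $\{1,\dots,n\}$ containing $1$, with the complement of $S$ placed increasingly along the top row. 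The crux is to verify that every such filling really is a DIRT and to compute its row strip shape: two consecutive integers occupy the same column exactly when both belong to $S$ (all of $S$ sits in column $1$, while the complement occupies distinct columns $\geq 2$), so each row strip is a maximal run of integers meeting $S$ at most once, namely an interval $[v_i,v_{i+1})$ between consecutive elements $v_1<\dots<v_{k+1}$ of $S$ (with $v_{k+2}:=n+1$). Each such interval has its minimum $v_i\in S$ in the leftmost column, so condition (2) of Definition~\ref{dirtDef} holds and the row strip shape equals the gap composition $(v_2-v_1,\dots,n+1-v_{k+1})$.

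The final step is combinatorial bookkeeping. The map $S\mapsto(v_2-v_1,\dots,n+1-v_{k+1})$ is the standard bijection between $(k+1)$-subsets of $\{1,\dots,n\}$ containing $1$ and compositions of $n$ into $k+1$ positive parts, so as $S$ varies the row strip shape runs bijectively over all such compositions. Writing these as $\beta^{rev}$, and using that reversal is a bijection on compositions of fixed length, gives $c_{\beta,(1^k,n-k)}=1$ when $\ell(\beta)=k+1$ and $0$ otherwise, which is exactly the stated formula. I expect the main obstacle to be the structural claim identifying row strips with the gaps of $S$ and simultaneously verifying condition (2) for every filling; once that is established the counting is immediate.
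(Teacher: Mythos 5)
Your proposal is correct and follows the paper's own route: part (1) is handled exactly as in the paper (contrapositive via Proposition~\ref{prop:rearrangeDIRT} in one direction, the dominance/uniqueness argument of Proposition~\ref{partMustBeEqualProp} and the superstandard DIRT in the other), and part (2) reduces to the same counting claim the paper invokes. The only difference is that you supply the explicit subset-parametrization of DIRTs of hook shape, which the paper dismisses as ``not hard to see''; your elaboration of that step is accurate.
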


\begin{proof}
(1) $(\Rightarrow)$ We prove the contrapositive.  Let $\alpha$ be a composition which is not a partition. Let $\lambda$ be the partition such that $\alpha$ rearranges to $\lambda$.  By Proposition~\ref{prop:rearrangeDIRT} we have $c_{\lambda,\alpha}\geq 1$.  Moreover, it is not hard to see that for any shape $\alpha$ there is a DIRT of shape $\alpha^{rev}$.  Thus, $c_{\alpha,\alpha}\geq 1$ and so Theorem~\ref{thm:mainDual} implies $\dYQS_\alpha \neq \I_\alpha$.

$(\Leftarrow)$ From Lemma~\ref{domLem}, we know that if $c_{\beta,\lambda} \neq 0$, then  $\beta \succeq \lambda$.  Since $\beta$ and $\lambda$ are both compositions of $n$ and have the same length, the fact that $\lambda$ is a partition forces $\beta = \lambda$.

(2)  It is not hard to see that  for any composition $\beta$ of length $k+1$, there is exactly one DIRT of shape $(1^k, n-k)$ with row strip shape $\beta^{rev}$.  The result then follows from Theorem~\ref{thm:mainDual}.
\end{proof}

\section{Remmel-Whitney-Style Algorithms}\label{sec:RWAlg}

 Since the coefficient of $\YQS_{\beta}$ is the number of DIRTs of shape $\beta$ with row strip shape $\alpha^{rev}$, we can decompose $\dI_{\alpha}$ without actually implementing the insertion algorithm.  Instead, we only need to find the the number of DIRTs of the correct shape and row strip shape.   We now explain how to find the DIRTs using an algorithm similar to the Remmel-Whitney method~\cite{RemWhi} used to multiply Schur functions.   The algorithm is recursive and produces a rooted tree where each node is a DIRT.

Suppose that we want to decompose $\dI_{\alpha}$ with $\alpha= (\alpha_1,\alpha_2,\dots, \alpha_\ell)$ into Young quasisymmetric Schur functions.   First, we set the root node to be the dual immaculate recording tableau:
$$
\tableau{1&2&3 & \cdot \cdot & \alpha_\ell}.
$$

Now  we describe how to create \emph{the children} of a node. Recall that given the composition $\alpha$ we defined
$j_m =\sum_{i=1}^{m-1} \alpha_{\ell-i-1}$.  Given a DIRT $Q$  with $k<\ell$ rows,  we create a child of $Q$ by placing the integers in $[j_k+1, j_{k+1}]$ one at a time into $Q$ using the following rules:
\begin{enumerate}
\item[(1)] The element  $j_k+1$ is placed in the leftmost column of the DIRT $Q$ below  its last row.
\item[(2)] Each subsequent element is placed at the end of a row strictly to the right of the last element placed.
\item[(3)] No element can be placed at the end of a row of length $m$ if there exists a row of length $m+1$ below this row.
\end{enumerate}
This algorithm continues until all the terminal nodes are dual immaculate recording tableaux with $\ell$ rows.

It is clear that this algorithm forces the rows to increase.  Moreover, (1) and (2) force the row strip condition and (3) forces the recording triple rule.   It is also clear that the leftmost column must increase from top to bottom.  Thus, the nodes are DIRTs with row strip shape $\alpha^{rev}$.  An inductive argument shows that every DIRT with row strip shape $\alpha^{rev}$ appears as a node in the diagram.

\begin{figure}
\begin{center}
\begin{tikzpicture}
\node(a) at (.5,0) {\tableau{1&2}};

\node(b) at (-2,-1) {\tableau{1&2\\3&4}};
\node(c) at (3,-1) {\tableau{1&2&4\\3}};
\draw(c)--(a)--(b);

\node(j) at (-4,-2.5) {\tableau{1&2\\3&4\\5&6}};
\node(d) at (-2,-2.5) {\tableau{1&2\\3&4&6\\5}};
\node(e) at (0,-2.5)  {\tableau{1&2&6\\3&4\\5}};
\draw(d)--(b)--(e);
\draw(j)--(b);

\node(f) at (2,-2.5)  {\tableau{1&2&4\\3\\5&6}};
\node(g) at (4,-2.5)  {\tableau{1&2&4\\3&6\\5}};
\node(h) at (6,-2.5)  {\tableau{1&2&4&6\\3\\5}};

\draw(f)--(c)--(g);
\draw(h)--(c);
\end{tikzpicture}\caption{The dual immaculate recording tableaux for the decomposition of $\dI_{(2,2,2)}.$} \label{RWExFig}
\end{center}
\end{figure}
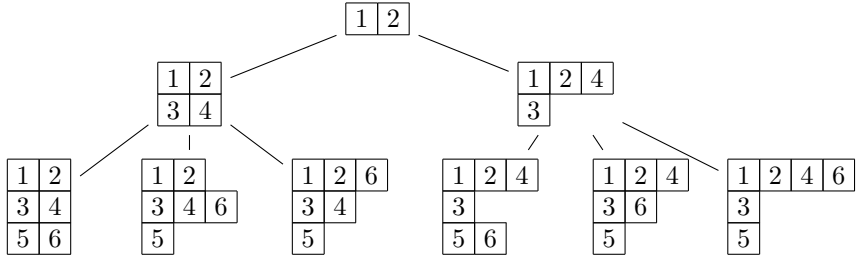

As an example of this algorithm, suppose that we want to decompose $\dI_{(2,2,2)}$.  The rooted tree in Figure~\ref{RWExFig} shows the output of the algorithm.  It follows that
$$
\dI_{(2,2,2)} = \YQS_{(2,2,2)}+\YQS_{(2,1,3)}+\YQS_{(1,3,2)}+2\YQS_{(1,2,3)}+\YQS_{(1,1,4)}.
$$
From this example, one can see that this algorithm is advantageous in  that it does not require knowing what the standard immaculate or standard Young composition tableaux are.    However, it is disadvantageous in  that it is recursive and so one must find all the smaller DIRTs in order to complete the algorithm.

We now provide a similar algorithm to find the coefficients of the decomposition of the Young noncommutative Schur functions into the immaculate functions.  We will produce a rooted tree from which we can read off the coefficients.  If we are trying to find the coefficients of $\dYQS_\alpha$, we will have a rooted tree  such that each node is a partially filled diagram with shape $\alpha$.  The root node is the empty filling.

Now we describe how to construct the children of a node.  This construction is a consequence of the definition of a DIRT.  Suppose we are at level $i$ of the rooted tree where we count the empty filling as the zero$^{th}$ level.  We place an $i$ in the empty cell in the leftmost column of the $i^{th}$ row counting from top to bottom.  Then we place $i$'s into empty cells so that the following hold. 
\begin{enumerate}
\item An $i$ can only be placed in an empty cell provided the cell immediately to its left is nonempty.
\item An $i$ cannot be placed in a column such that there is a nonempty cell below which is the rightmost nonempty cell in its row.
\item At most one $i$ can appear in each column.
\end{enumerate}

We note that the fillings we obtain in the algorithm are not necessarily DIRTs since there can be repeated entries.  However, the diagrams give the row strips from which one can easily construct the corresponding DIRTs.  Given the rooted tree, the coefficient of $\I_{(\beta_1,\beta_2,\dots, \beta_\ell)}$ in the decomposition of $\dYQS_\alpha$  is the number of fillings such that the number of $i$'s is $\beta_{\ell-i+1}$. We also note that some nodes do not have children as can be seen in the two leftmost fillings on the 2$^{nd}$ row of the rooted tree in Figure~\ref{RW2ExFig}.  It is not hard to check that this algorithm always produces all fillings which correspond to the DIRTs for the decomposition and that every filling it produces corresponds to some DIRT.

\begin{figure}

\begin{center}
\noindent\resizebox{\textwidth}{!}{\begin{tikzpicture}

\node(a) at (-2,-.75) {\tableau{{}&{}&{}\\ {}&{}\\ {}}};

\node(b) at (-7,-2.5) {\tableau{{1}&{}&{}\\ {}&{}\\ {}}};
\node(c) at (3,-2.5) {\tableau{{1}&{1}&{1}\\ {}&{}\\ {}}};
\node(d) at (-2,-2.5) {\tableau{{1}&{1}&{}\\ {}&{}\\ {}}};
\draw(c)--(a)--(b);
\draw(d)--(a);

\node(e) at (-6,-5) {\tableau{{1}&{2}&{}\\ {2}&{}\\ {}}};
\node(f) at (-4.5,-5) {\tableau{{1}&{2}&{2}\\ {2}&{}\\ {}}};
\node (z) at (-7.5,-5) {\tableau{{1}&{}&{}\\ {2}&{2}\\ {}}};
\node(y) at (-9,-5)  {\tableau{{1}&{}&{}\\ {2}&{}\\ {}}};
\draw (e)--(b)--(f);
\draw (y)--(b)--(z);

\node(g) at (-3,-5) {\tableau{{1}&{1}&{}\\ {2}&{}\\ {}}};
\node(h) at (-1.5,-5) {\tableau{{1}&{1}&{}\\ {2}&{2}\\ {}}};
\node(i) at (0,-5) {\tableau{{1}&{1}&{2}\\ {2}&{}\\ {}}};
\node(j) at (1.5,-5) {\tableau{{1}&{1}&{2}\\ {2}&{2}\\ {}}};
\node (j') at (1.6, -4.25) {};

\draw (g)--(d)--(h);
\draw (i)--(d)--(j');

\node(k) at (3,-5) {\tableau{{1}&{1}&{1}\\ {2}&{}\\ {}}};
\node(l) at (4.5,-5) {\tableau{{1}&{1}&{1}\\ {2}&{2}\\ {}}};
\draw (k)--(c)--(l);

\node(m) at (-6,-7) {\tableau{{1}&{2}&{3}\\ {2}&{3}\\ {3}}};
\node(n) at (-4.5,-7) {\tableau{{1}&{2}&{2}\\ {2}&{3}\\ {3}}};
\draw (m)--(e);
\draw (n)--(f);

\node(o) at (-3,-7) {\tableau{{1}&{1}&{3}\\ {2}&{3}\\ {3}}};
\node(p) at (-1.5,-7) {\tableau{{1}&{1}&{3}\\ {2}&{2}\\ {3}}};
\node(q) at (0,-7) {\tableau{{1}&{1}&{2}\\ {2}&{3}\\ {3}}};
\node(r) at (1.5,-7) {\tableau{{1}&{1}&{2}\\ {2}&{2}\\ {3}}};
\draw (o)--(g);
\draw (p)--(h);
\draw (q)--(i);
\draw (r)--(j);

\node(s) at (3,-7) {\tableau{{1}&{1}&{1}\\ {2}&{3}\\ {3}}};
\node(t) at (4.5,-7) {\tableau{{1}&{1}&{1}\\ {2}&{2}\\ {3}}};
\draw (s)--(k);
\draw (t)--(l);

\end{tikzpicture}}\caption{The fillings  for the decomposition of $\dYQS_{(1,2,3)}.$} \label{RW2ExFig}
\end{center}
\end{figure}
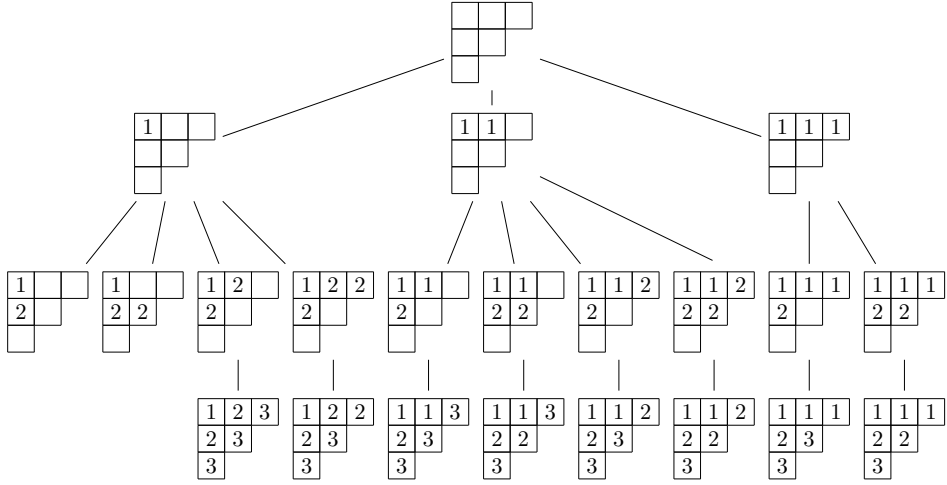

The rooted tree for $\dYQS_{(1,2,3)}$ is displayed in Figure~\ref{RW2ExFig}.    From it we see that
$$
\dYQS_{(1,2,3)} = \I_{(3,2,1)}+ \I_{(2,3,1)}+ \I_{(3,1,2)} + 2\I_{(2,2,2)}+\I_{(1,3,2)}+\I_{(2,1,3)}+\I_{(1,2,3)}.
$$

\section{Future Directions}
A natural next step is to investigate the coefficients when the Young quasisymmetric Schur functions are expanded into dual immaculate quasisymmetric functions.  If $\YQS_{\beta}$ appears in the decomposition of $\dI_{\alpha}$, then Theorem~\ref{thm:main} implies there is a DIRT of shape $\beta$ and row strip shape $\alpha^{rev}$. Lemma~\ref{domLem} implies that  $\alpha \succeq \beta$  in dominance order. Moreover, there is exactly one DIRT  of shape $\alpha$ and row strip shape $\alpha^{rev}$.  Theorem~\ref{thm:main} therefore implies that
\begin{equation}\label{yqsTodIEq}
\YQS_{\alpha} =  \dI_{\alpha} - \sum_{\beta} c_{\alpha,\beta} \YQS_{\beta},
\end{equation}
where is the sum is now over $\beta$ which are strictly  smaller than $\alpha$ in dominance order.  Since $\dI_{(1^k,n-k)} = \YQS_{(1^k,n-k)}$, equation~\ree{yqsTodIEq} along with induction implies that the Young quasisymmetric Schur functions can be decomposed into the dual immaculate quasisymmetric functions with integer coefficients.  Based on some calculations in Sage, we have the following conjectures.
\begin{conjecture}
Let $\alpha, \beta \vDash n$. If
$$
\YQS_\alpha = \sum b_{\alpha,\beta} \dI_\beta
$$
then $b_{\alpha,\beta}\in \{-1,0,1\}$.  Moreover, for a fixed $\alpha$,
$$
\sum b_{\alpha,\beta} =\begin{cases} 1 & \mbox{ if } \alpha = (1^{k}, n-k),\\ 0& \mbox{otherwise} \end{cases}
$$
for some $k$.
\end{conjecture}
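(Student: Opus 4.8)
The plan is to recast the statement as a fact about the inverse of the integer matrix $C=(c_{\alpha,\beta})$ that records the DIRT counts of Theorem~\ref{thm:main}, and then to prove its two assertions separately. First I would record the structural features of $C$ that are already available. Since a DIRT of shape $\beta$ with row strip shape $\alpha^{rev}$ forces $\ell(\alpha)=\ell(\beta)$ (each leftmost-column entry begins exactly one row strip, so the number of row strips equals the number of rows) and, by Lemma~\ref{domLem}, forces $\alpha\succeq\beta$, the matrix $C$ is block-diagonal by length and, within each length, unitriangular with respect to dominance order: $c_{\alpha,\alpha}=1$ (the unique DIRT of shape $\alpha$ with row strip shape $\alpha^{rev}$) while $c_{\alpha,\beta}\neq 0$ implies $\alpha\succ\beta$. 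Hence $C$ is invertible over $\mathbb{Z}$, its inverse $B=(b_{\alpha,\beta})$ is unitriangular with integer entries, and $b_{\alpha,\beta}\neq 0$ forces $\ell(\alpha)=\ell(\beta)$ and $\alpha\succeq\beta$; this reproves the integrality noted after Equation~\ref{yqsTodIEq}. The conjecture is thus a purely combinatorial statement about $C^{-1}$, with no further appeal to $QSym$.

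For the row-sum assertion I would argue as follows. Writing $\mathbf{1}$ for the all-ones vector indexed by compositions of $n$, the row sums are the entries of $u:=B\mathbf{1}=C^{-1}\mathbf{1}$, i.e. the unique solution of $Cu=\mathbf{1}$. I claim the solution is the indicator of hooks: $u_\beta=1$ when $\beta=(1^{\ell(\beta)-1},\,n-\ell(\beta)+1)$ and $u_\beta=0$ otherwise. Indeed, for fixed $\alpha$ of length $k+1$ the only hook of matching length is $(1^{k},n-k)$, so $\sum_\beta c_{\alpha,\beta}u_\beta=c_{\alpha,(1^{k},n-k)}$, which counts DIRTs of shape $(1^{k},n-k)$ with row strip shape $\alpha^{rev}$; this count is exactly $1$ by the uniqueness established in the proof of the hook-symmetry proposition. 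Thus $Cu=\mathbf{1}$, and by invertibility $u$ is the row-sum vector, giving $\sum_\beta b_{\alpha,\beta}=1$ precisely when $\alpha=(1^{k},n-k)$ and $0$ otherwise. One checks this directly for small $n$: the length-two block for $n=3$ is $\left(\begin{smallmatrix}1&1\\0&1\end{smallmatrix}\right)$ with inverse $\left(\begin{smallmatrix}1&-1\\0&1\end{smallmatrix}\right)$, whose row sums are $0$ and $1$, matching the non-hook $(2,1)$ and the hook $(1,2)$.

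The remaining, and genuinely hard, assertion is the bound $b_{\alpha,\beta}\in\{-1,0,1\}$. Writing $C=I+N$ with $N$ strictly upper-triangular in dominance order, the Neumann series gives the signed chain formula
$$
b_{\alpha,\beta}=\sum_{k\ge 0}(-1)^{k}\sum_{\alpha=\gamma_0\succ\gamma_1\succ\cdots\succ\gamma_k=\beta}\ \prod_{i=1}^{k}c_{\gamma_{i-1},\gamma_i}.
$$
Since each factor $c_{\gamma_{i-1},\gamma_i}$ itself counts DIRTs, the right-hand side is a signed enumeration of flags of compositions decorated by a DIRT at each step, and the goal is to build a sign-reversing involution on these decorated flags whose fixed points number at most one. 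The delicate feature is that $C$ is \emph{not} a $0/1$ matrix: coefficients such as $c_{(2,2,2),(1,2,3)}=2$ occur, yet the conjecture predicts no accumulation in the inverse. I expect the \textbf{main obstacle} to be exactly taming these multiplicities, pairing decorated flags so that the two DIRTs recorded by a coefficient equal to $2$ are cancelled against longer flags or matched with one another. A promising alternative is to work on the $NSym$ side, where Theorem~\ref{thm:mainDual} shows the same matrix (transposed) governs the expansion of $\dYQS_\alpha$ into the immaculate basis $\I_\beta$; the recursive tree of Section~\ref{sec:RWAlg} for $\dYQS_\alpha$ may furnish a branching recursion for $b_{\alpha,\beta}$ in which each uncovering step alters the coefficient by at most $\pm1$, yielding the bound inductively. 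Either route reduces the conjecture to a single combinatorial cancellation statement about DIRTs, which is where the real work lies.
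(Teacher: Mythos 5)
This statement appears in the paper only as a conjecture, supported by Sage computations; the paper gives no proof, so there is no argument of the authors' to compare yours against. That said, your setup is sound and your proof of the \emph{second} assertion is complete and correct. The matrix $C=(c_{\alpha,\beta})$ of Theorem~\ref{thm:main} is indeed unitriangular: each row strip of a DIRT contributes exactly one entry to the leftmost column, so $c_{\alpha,\beta}\neq 0$ forces $\ell(\alpha)=\ell(\beta)$, and together with Lemma~\ref{domLem} and the uniqueness of the DIRT of shape $\alpha$ with row strip shape $\alpha^{rev}$ this gives an integer unitriangular inverse $B$. Your row-sum computation then works: for $u$ the indicator vector of hooks $(1^{k},n-k)$, the identity $Cu=\mathbf{1}$ reduces to the fact (asserted in the paper's proof that $\dI_\alpha$ is symmetric iff $\alpha$ is such a hook) that every composition $\alpha$ of length $k+1$ admits exactly one DIRT of shape $(1^{k},n-k)$ with row strip shape $\alpha^{rev}$, and invertibility of $C$ forces $u=B\mathbf{1}$. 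This actually settles the row-sum half of the conjecture, which is more than the paper does.

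The genuine gap is the first assertion, $b_{\alpha,\beta}\in\{-1,0,1\}$, which your proposal does not prove. The Neumann-series expansion
$$
b_{\alpha,\beta}=\sum_{k\ge 0}(-1)^{k}\sum_{\alpha=\gamma_0\succ\gamma_1\succ\cdots\succ\gamma_k=\beta}\ \prod_{i=1}^{k}c_{\gamma_{i-1},\gamma_i}
$$
is a correct starting point, and you correctly identify that the difficulty is the entries $c_{\gamma,\delta}\ge 2$ (e.g.\ $c_{(2,2,2),(1,2,3)}=2$), but the sign-reversing involution on decorated flags that would control the cancellation is never constructed, and the alternative route through the recursive tree of Section~\ref{sec:RWAlg} is only sketched as a hope. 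As written, the proposal proves half the conjecture and leaves the other half exactly where the paper left it: open.
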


\begin{conjecture}

If $\lambda= (\lambda_1,\lambda_2, \dots, \lambda_k)$ is a partition with all $k$ parts  distinct, then
$$
\YQS_\lambda = \sum_{\sigma \in S_k} (-1)^{\ell(\sigma)} \dI_{\sigma(\lambda)}
$$
where $\sigma(\lambda)  = (\lambda_{\sigma(1)},\lambda_{\sigma(2)}, \dots, \lambda_{\sigma(k)})$ and $\ell(\sigma)$ is the length of  $\sigma$ (i.e. the minimum number of transpositions of the form (i, i+1) needed to generate $\sigma$).
\end{conjecture}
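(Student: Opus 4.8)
The plan is to reduce the identity to a signed enumeration of DIRTs and then to resolve that enumeration by a sign-reversing involution. First I would apply Theorem~\ref{thm:main} to every summand on the right, writing $\dI_{\sigma(\lambda)} = \sum_\beta c_{\sigma(\lambda),\beta}\YQS_\beta$, where $c_{\sigma(\lambda),\beta}$ counts DIRTs of shape $\beta$ and row strip shape $\sigma(\lambda)^{rev}$ (Definition~\ref{dirtDef}, Definition~\ref{def:rowstrip}). Interchanging the two sums, the coefficient of $\YQS_\beta$ on the right becomes $\sum_{\sigma\in S_k}(-1)^{\ell(\sigma)}c_{\sigma(\lambda),\beta}$. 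Since $\lambda$ has distinct parts, any DIRT $Q$ of shape $\beta$ whose row strip shape rearranges $\lambda^{rev}$ determines a unique $\sigma_Q\in S_k$ (the one with $\sigma_Q(\lambda)^{rev}$ equal to the row strip shape of $Q$), so this coefficient is a signed count $\sum_Q(-1)^{\ell(\sigma_Q)}$ over all such $Q$. The conjecture is therefore equivalent to the combinatorial claim that this signed count equals $1$ when $\beta=\lambda$ and $0$ otherwise.

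The case $\beta=\lambda$ I would settle directly. By Lemma~\ref{domLem}, a DIRT of shape $\lambda$ and row strip shape $\gamma^{rev}$ forces $\gamma\succeq\lambda$; as $\gamma$ rearranges $\lambda$ and a partition is dominance-largest among its rearrangements, $\gamma=\lambda$, so the row strip shape is $\lambda^{rev}$. The superstandard tableau introduced just after Theorem~\ref{thm:main} is the unique such DIRT, and it carries the sign of the identity, so the signed count is exactly $1$. The same dominance bound (now applied with shape $\beta$: $\beta\preceq\gamma\preceq\lambda$) shows the signed count vanishes identically unless $\beta\preceq\lambda$, so it remains to treat compositions $\beta$ with $\ell(\beta)=k$ and $\beta\prec\lambda$.

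For such $\beta$ the target is a fixed-point-free, sign-reversing involution on the set of DIRTs of shape $\beta$ whose row strip shape rearranges $\lambda^{rev}$. Because the sign of $Q$ depends only on its row strip shape and the parts of $\lambda$ are distinct, interchanging the lengths of two adjacent row strips of $Q$ changes $\ell(\sigma_Q)$ by an odd number and hence flips the sign. I would therefore locate a canonical adjacent pair of row strips and swap them, realizing the swap as a relabeling of the entries lying in those two strips while keeping the cells, and hence the shape $\beta$, fixed. The model case $\lambda=(2,1)$, $\beta=(1,2)$ illustrates the mechanism: the two DIRTs
$$\tableau{1&3\\2} \qquad \text{and} \qquad \tableau{1&2\\3}$$
have row strip shapes $(1,2)$ and $(2,1)$, carry opposite signs, and are interchanged by swapping the entries of their non-initial strips.

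The hard part will be constructing and verifying this involution. One must (i) show the relabeling can always be performed so that the output is again a DIRT, that is, that rows stay increasing, the leftmost column stays increasing, and crucially the recording triple rule is preserved; (ii) choose the canonical adjacent pair so that the map is genuinely involutive; and (iii) prove that no fixed points survive when $\beta\prec\lambda$, which is precisely where the distinct-parts hypothesis and the strict dominance $\beta\prec\lambda$ must enter. As a parallel consistency check I would also work at the level of monomials, where Theorem~\ref{thm:main} recasts the identity as a content-by-content statement matching semistandard Young composition tableaux of shape $\lambda$ against a signed sum of immaculate tableaux of shapes $\sigma(\lambda)$ -- a statement of Lindstr\"om--Gessel--Viennot / Jacobi--Trudi type, for which a suffix-swapping involution that resolves the first violation of the triple rule (and of the non-crossing condition encoded by the strict first column) is the natural tool. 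In either formulation, enforcing the triple rule on the fixed points -- the very feature that separates $\YQS_\lambda$ from $\dI_\lambda$ -- is the central difficulty.
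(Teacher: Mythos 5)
First, be aware that the paper does not prove this statement: it appears there as a conjecture supported only by Sage computations, so there is no authors' proof to compare yours against. Your reduction is sound as far as it goes. Expanding each $\dI_{\sigma(\lambda)}$ via Theorem~\ref{thm:main} and using the distinctness of the parts of $\lambda$ to recover $\sigma$ from the row strip shape correctly recasts the identity as the claim that the signed count of DIRTs of shape $\beta$ whose row strip shape rearranges $\lambda^{rev}$ equals $1$ if $\beta=\lambda$ and $0$ otherwise. The diagonal case does follow from Proposition~\ref{partMustBeEqualProp} together with the uniqueness of the superstandard DIRT, Lemma~\ref{domLem} does eliminate all $\beta\not\preceq\lambda$ (and all $\beta$ of the wrong length), and your model computation for $\lambda=(2,1)$, $\beta=(1,2)$ is correct.

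The genuine gap is that the sign-reversing, fixed-point-free involution for $\beta\prec\lambda$ --- which is the entire content of the conjecture once this bookkeeping is done --- is never constructed. You state what it must accomplish (swap the lengths of a canonically chosen adjacent pair of row strips by relabelling entries within the cells of $\beta$) and you correctly observe that any such length swap flips the sign because the parts of $\lambda$ are distinct, but you give no rule for carrying out the relabelling, no argument that the output still satisfies the recording triple rule and the row and column conditions of Definition~\ref{dirtDef}, no verification that the map is an involution, and no proof that fixed points cannot survive when $\beta\prec\lambda$. A Bender--Knuth-style swap of adjacent strips does not obviously preserve the triple rule, since the set of cells occupied by each strip changes when the lengths change, and the triple rule is precisely the feature separating $\YQS_\lambda$ from $\dI_\lambda$; identifying this as ``the central difficulty'' is accurate but does not discharge it. As written, your proposal is a correct reduction plus a research plan, not a proof, and the statement remains open.
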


In~\cite{BerBerSalSerZab14}, the authors give a formula for the number of  standard immaculate tableaux for a fixed shape. Using this formula and our bijection  we hope to find a formula for the number of standard Young composition tableaux in terms of the number of standard immaculate tableaux.  It would also be interesting to investigate the relationship between these functions and other new bases for quasisymmetric functions.  For example the shin basis~\cite{CamFelLigShuXu14} is a new basis for $NSym$ whose dual is also in $QSym$.

\section{Acknowledgements}\label{sec:ack}
The authors would like to thank Luis Serrano for helpful conversations and data.  We used the open-source software {\tt Sage} and its combinatorial features {\tt Sage-Combinat} for computer explorations.

\nocite{*}
\bibliographystyle{alpha}
\bibliography{dIFPSACbib}

\newcommand{\etalchar}[1]{$^{#1}$}
\begin{thebibliography}{HLMvW11b}

\bibitem[ABS06]{ABS06}
Marcelo Aguiar, Nantel Bergeron, and Frank Sottile.
\newblock Combinatorial {H}opf algebras and generalized {D}ehn-{S}ommerville
  relations.
\newblock {\em Compos. Math.}, 142(1):1--30, 2006.

\bibitem[BBS{\etalchar{+}}14]{BerBerSalSerZab14}
Chris Berg, Nantel Bergeron, Franco Saliola, Luis Serrano, and Mike Zabrocki.
\newblock A lift of the {S}chur and {H}all-{L}ittlewood bases to
  non-commutative symmetric functions.
\newblock {\em Canad. J. Math.}, 66(3):525--565, 2014.

\bibitem[BBS{\etalchar{+}}15]{BBSSZ15}
Chris Berg, Nantel Bergeron, Franco Saliola, Luis Serrano, and Mike Zabrocki.
\newblock Indecomposable modules for the dual immaculate basis of
  quasi-symmetric functions.
\newblock {\em Proc. Amer. Math. Soc.}, 143(3):991--1000, 2015.

\bibitem[BHvW03]{BHvW06}
Louis~J. Billera, Samuel~K. Hsiao, and Stephanie van Willigenburg.
\newblock Peak quasisymmetric functions and {E}ulerian enumeration.
\newblock {\em Adv. Math.}, 176(2):248--276, 2003.

\bibitem[BSOZ16]{BerSanZab14}
Nantel Bergeron, Juana S{\'a}nchez-Ortega, and Mike Zabrocki.
\newblock The {P}ieri {R}ule for {D}ual {I}mmaculate {Q}uasi-{S}ymmetric
  {F}unctions.
\newblock {\em Ann. Comb.}, 20(2):283--300, 2016.

\bibitem[BZ13]{BBSSZ13}
Bergeron N. Saliola F. Serrano~L. Berg, C. and M.~Zabrocki.
\newblock Multiplicative structures of the immaculate basis of non-commutative
  symmetric functions.
\newblock {\em arXiv:1305.4700}, 2013.

\bibitem[Cau15]{Cau1815}
A.~L. Cauchy.
\newblock M\'{e}moire sur les fonctions qui ne peuvent obtenir que deux valeurs
  \'{e}gales et de signes contraires par suite des transpositions
  op\'{e}r\'{e}s entre les variables qu'elles renferment.
\newblock {\em J. \'{E}cole Polyt.}, 10:29--112, 1815.

\bibitem[CFL{\etalchar{+}}14]{CamFelLigShuXu14}
John Campbell, Karen Feldman, Jennifer Light, Pavel Shuldiner, and Yan Xu.
\newblock A {S}chur-like basis of {{NS}ym} defined by a {P}ieri rule.
\newblock {\em Electron. J. Combin.}, 21(3):Paper 3.41, 19, 2014.

\bibitem[Che55]{Che55}
Claude Chevalley.
\newblock Invariants of finite groups generated by reflections.
\newblock {\em Amer. J. Math.}, 77:778--782, 1955.

\bibitem[Ehr96]{Ehr96}
R.~Ehrenborg.
\newblock On posets and {H}opf algebras.
\newblock {\em Advances in mathematics}, 119(1):1--25, 1996.

\bibitem[Ful97]{Ful97}
William Fulton.
\newblock {\em Young tableaux}, volume~35 of {\em London Mathematical Society
  Student Texts}.
\newblock Cambridge University Press, Cambridge, 1997.
\newblock With applications to representation theory and geometry.

\bibitem[Gei77]{Gei77}
Ladnor Geissinger.
\newblock Hopf algebras of symmetric functions and class functions.
\newblock In {\em Combinatoire et repr\'esentation du groupe sym\'etrique
  ({A}ctes {T}able {R}onde {C}.{N}.{R}.{S}., {U}niv. {L}ouis-{P}asteur
  {S}trasbourg, {S}trasbourg, 1976)}, pages 168--181. Lecture Notes in Math.,
  Vol. 579. Springer, Berlin, 1977.

\bibitem[Ges84]{Ges84}
I.M. Gessel.
\newblock Multipartite p-partitions and inner products of skew {S}chur
  functions.
\newblock {\em Contemp. Math}, 34:289--301, 1984.

\bibitem[GKL{\etalchar{+}}95]{GKLLRT95}
Israel~M. Gelfand, Daniel Krob, Alain Lascoux, Bernard Leclerc, Vladimir~S.
  Retakh, and Jean-Yves Thibon.
\newblock Noncommutative symmetric functions.
\newblock {\em Adv. Math.}, 112(2):218--348, 1995.

\bibitem[GR93]{GesReu93}
Ira~M. Gessel and Christophe Reutenauer.
\newblock Counting permutations with given cycle structure and descent set.
\newblock {\em J. Combin. Theory Ser. A}, 64(2):189--215, 1993.

\bibitem[GR05]{GarRem05}
A.~Garsia and Jeffrey~B. Remmel.
\newblock Breakthroughs in the theory of {M}acdonald polynomials.
\newblock {\em Proc. Natl. Acad. Sci. USA}, 102(11):3891--3894 (electronic),
  2005.

\bibitem[Hag04]{Hag04a}
J.~Haglund.
\newblock A combinatorial model for the {M}acdonald polynomials.
\newblock {\em Proceedings of the National Academy of Sciences of the United
  States of America}, 101(46):16127, 2004.

\bibitem[Hag06]{Hag06}
J.~Haglund.
\newblock The genesis of the macdonald polynomial statistics.
\newblock {\em S{\'e}minaire Lotharingien de Combinatoire}, 54:B54Ao, 2006.

\bibitem[HH09]{HerHsi09}
Patricia Hersh and Samuel~K. Hsiao.
\newblock Random walks on quasisymmetric functions.
\newblock {\em Adv. Math.}, 222(3):782--808, 2009.

\bibitem[HHL08]{HHL08}
J.~Haglund, M.~Haiman, and N.~Loehr.
\newblock A combinatorial formula for nonsymmetric {M}acdonald polynomials.
\newblock {\em Amer. J. Math.}, 130(2):359--383, 2008.

\bibitem[Hiv00]{Hiv00}
Florent Hivert.
\newblock Hecke algebras, difference operators, and quasi-symmetric functions.
\newblock {\em Adv. Math.}, 155(2):181--238, 2000.

\bibitem[HLMvW11a]{HLMvW09}
J.~Haglund, K.~Luoto, S.~Mason, and S.~van Willigenburg.
\newblock Quasisymmetric {S}chur functions.
\newblock {\em J. Combin. Theory Ser. A}, 118(2):463--490, 2011.

\bibitem[HLMvW11b]{HLMvW10}
J.~Haglund, K.~Luoto, S.~Mason, and S.~van Willigenburg.
\newblock Refinements of the {L}ittlewood-{R}ichardson rule.
\newblock {\em Trans. Amer. Math. Soc.}, 363(3):1665--1686, 2011.

\bibitem[KT97]{KroThi97}
Daniel Krob and Jean-Yves Thibon.
\newblock Noncommutative symmetric functions. {IV}. {Q}uantum linear groups and
  {H}ecke algebras at {$q=0$}.
\newblock {\em J. Algebraic Combin.}, 6(4):339--376, 1997.

\bibitem[KT99]{KroThi99}
Daniel Krob and Jean-Yves Thibon.
\newblock Noncommutative symmetric functions. {V}. {A} degenerate version of
  {$U_q({\rm gl}_N)$}.
\newblock {\em Internat. J. Algebra Comput.}, 9(3-4):405--430, 1999.
\newblock Dedicated to the memory of Marcel-Paul Sch{\"u}tzenberger.

\bibitem[Kwo09]{Kwo09}
Jae-Hoon Kwon.
\newblock Crystal graphs for general linear {L}ie superalgebras and
  quasi-symmetric functions.
\newblock {\em J. Combin. Theory Ser. A}, 116(7):1199--1218, 2009.

\bibitem[LMvW13]{LMvW13}
Kurt Luoto, Stefan Mykytiuk, and Stephanie van Willigenburg.
\newblock {\em An introduction to quasisymmetric {S}chur functions}.
\newblock Springer Briefs in Mathematics. Springer, New York, 2013.
\newblock Hopf algebras, quasisymmetric functions, and Young composition
  tableaux.

\bibitem[Mac88]{Mac88}
I.G. Macdonald.
\newblock A new class of symmetric functions.
\newblock {\em S{\'e}m. Lothar. Combin}, 20, 1988.

\bibitem[Mac15]{Mac08}
I.~G. Macdonald.
\newblock {\em Symmetric functions and {H}all polynomials}.
\newblock Oxford Classic Texts in the Physical Sciences. The Clarendon Press,
  Oxford University Press, New York, second edition, 2015.
\newblock With contribution by A. V. Zelevinsky and a foreword by Richard
  Stanley, Reprint of the 2008 paperback edition.

\bibitem[Mar99]{Mar99}
D.~Marshall.
\newblock Symmetric and nonsymmetric {M}acdonald polynomials.
\newblock {\em Annals of Combinatorics}, 3(2):385--415, 1999.

\bibitem[MR95]{MalReu95}
C.~Malvenuto and C.~Reutenauer.
\newblock Duality between quasi-symmetric functions and the {S}olomon descent
  algebra.
\newblock {\em J. Algebra}, 177(3):967--982, 1995.

\bibitem[MR14]{MasRem10}
S.~Mason and J.~Remmel.
\newblock Row-strict quasisymmetric {S}chur functions.
\newblock {\em Annals of {C}ombinatorics}, 18(1):127--148, 2014.

\bibitem[RW84]{RemWhi}
JB~Remmel and R.~Whitney.
\newblock Multiplying {S}chur functions.
\newblock {\em Journal of Algorithms}, 5(4):471--487, 1984.

\bibitem[Sag01]{Sag01}
Bruce~E. Sagan.
\newblock {\em The symmetric group}, volume 203 of {\em Graduate Texts in
  Mathematics}.
\newblock Springer-Verlag, New York, second edition, 2001.
\newblock Representations, combinatorial algorithms, and symmetric functions.

\bibitem[Sch73]{Sch73}
I.~Schur.
\newblock {\em \"{U}ber eine {K}lasse von {M}atrizen, die sich einer gegeben
  {M}atrix zuorden lassen}.
\newblock PhD thesis, 1973.

\bibitem[ST54]{SheTod54}
G.~C. Shephard and J.~A. Todd.
\newblock Finite unitary reflection groups.
\newblock {\em Canadian J. Math.}, 6:274--304, 1954.

\bibitem[Sta72]{Sta72}
Richard~P. Stanley.
\newblock Ordered structures and partitions.
\newblock pages iii+104, 1972.
\newblock Memoirs of the American Mathematical Society, No. 119.

\bibitem[Sta77]{Sta77}
Richard~P. Stanley.
\newblock Some combinatorial aspects of the {S}chubert calculus.
\newblock In {\em Combinatoire et repr\'esentation du groupe sym\'etrique
  ({A}ctes {T}able {R}onde {CNRS}, {U}niv. {L}ouis-{P}asteur {S}trasbourg,
  {S}trasbourg, 1976)}, pages 217--251. Lecture Notes in Math., Vol. 579.
  Springer, Berlin, 1977.

\bibitem[Sta84]{Sta84}
Richard~P. Stanley.
\newblock On the number of reduced decompositions of elements of {C}oxeter
  groups.
\newblock {\em European J. Combin.}, 5(4):359--372, 1984.

\bibitem[Sta99]{Sta99}
Richard~P. Stanley.
\newblock {\em Enumerative combinatorics. {V}ol. 2}, volume~62 of {\em
  Cambridge Studies in Advanced Mathematics}.
\newblock Cambridge University Press, Cambridge, 1999.
\newblock With a foreword by Gian-Carlo Rota and appendix 1 by Sergey Fomin.

\bibitem[Sta01]{Sta01}
Richard~P. Stanley.
\newblock Generalized riffle shuffles and quasisymmetric functions.
\newblock {\em Ann. Comb.}, 5(3-4):479--491, 2001.
\newblock Dedicated to the memory of Gian-Carlo Rota (Tianjin, 1999).

\bibitem[vW13]{vWi13}
Stephanie van Willigenburg.
\newblock Noncommutative irreducible characters of the symmetric group and
  noncommutative {S}chur functions.
\newblock {\em J. Comb.}, 4(4):403--418, 2013.

\end{thebibliography}
\label{sec:biblio}

\end{document}